\DeclareMathOperator{\Spec}{Spec} \DeclareMathOperator{\Aut}{Aut}
\DeclareMathOperator{\End}{End} \DeclareMathOperator{\Char}{char}
 \DeclareMathOperator{\Ker}{Ker}
\DeclareMathOperator{\Mat}{M} 
\DeclareMathOperator{\Deg}{deg}
\DeclareMathOperator{\Id}{Id} 
 \DeclareMathOperator{\GL}{GL}
\DeclareMathOperator{\diag}{diag} 
\DeclareMathOperator{\gr}{gr} \DeclareMathOperator{\Ind}{Ind}
\newcommand{\TAut}{\operatorname{TAut}}
\newtheorem{thm}{Theorem}[section]
\newtheorem{lem}[thm]{Lemma}
\newtheorem{prop}[thm]{Proposition}
\newtheorem{Cprop}[thm]{Conjectural Proposition}
\newtheorem{cor}[thm]{Corollary}
\newtheorem{conj}[thm]{Conjecture}
\newtheorem{Def}[thm]{Definition}
\newtheorem{remark}[thm]{Remark}
\newtheorem{example}[thm]{Example}
\begin{document}

\title{\bf Torus actions on free associative algebras, lifting and Bia\l{}ynicki-Birula type theorems}
\renewcommand\Affilfont{\itshape\small}
\author[2]{Alexei Belov-Kanel\thanks{kanel@mccme.ru}}
\author[3]{Andrey Elishev\thanks{andrey.elishev@gmail.com}}
\author[4,7]{Farrokh Razavinia\thanks{f.razavinia@phystech.edu}}
\author[2]{Louis Rowen\thanks{rowen@math.biu.ac.il}}
\author[5]{Jie-Tai Yu\thanks{jietaiyu@szu.edu.cn,
jietaiyu@163.com}}
\author[1,6]{Wenchao Zhang\thanks{Correspondence: zhangwc@hzu.edu.cn}}

%\thanks{The research conducted in this work was in part supported by a grant from IPM (No. 1403170014)}

\affil[1]{School of Mathematics and Statistics, Huizhou University, Huizhou 516007, China}
\affil[2]{Department of Mathematics, Bar-Ilan University, Ramat-Gan 52900, Israel}
\affil[3]{Moscow Institute of Physics and Technology, Dolgoprudny, Moscow Region 141700, Russia}
%\affil[4]{Faculty of Sciences, Mathematics Group, Azarbaijan Shahid Madani University, Tabriz, Iran} ask him.
\affil[4]{Department of Mathematics, Azarbaijan Shahid Madani University, Tabriz 5375171379, Iran}
\affil[5]{College of Mathematics and Statistics, Shenzhen University, Shenzhen 518061, China}
\affil[6]{Department of Mathematics, Southern University of Science and Technology, Shenzhen 518055, China}
\affil[7]{School of Mathematics, Institute for Research in Fundamental Sciences (IPM), P.O. Box 19395-5746, Tehran, Iran.}

%\renewcommand{\thefootnote}{\fnsymbol{footnote}}
%\footnotetext{{\it 2010 Mathematics Subject Classification:} 16S10, 14R10, 14R20}
%\footnotetext{{\it Key words:} Torus action, linearity, free algebra, automorphism lifting.}
%\renewcommand{\thefootnote}{\arabic{footnote}}
%\fontsize{12}{12pt}\selectfont

%\date{}

\maketitle

\renewcommand{\abstractname}{Abstract}

\begin{abstract}
We examine the problem of the linearity of an algebraic torus action
in the associative setting. We prove the free algebra analog of a
classical theorem of Bia\l{}ynicki-Birula, which establishes
linearity of maximal torus action. Additionally, we formulate and prove linearity theorems for specific classes of regular actions, and provide a framework for constructing non-linearizable actions, analogous to the work of Asanuma. This framework has applications in the study of the Associative Cancellation Conjecture. Furthermore, we show the existence of two non-isomorphic algebras, whose free products with a polynomial ring are isomorphic.
\end{abstract}

\qquad \qquad Dedicated to the memory of Professor Nikolai Alexandrovich Vavilov.

\medskip

\medskip

{\it 2020 Mathematics Subject Classification:} 16S10, 14R10, 14R20

{\it Key words:} Torus action, linearity, free algebra, automorphism lifting.

\section{Introduction}

%In this note we consider algebraic torus actions on the affine space, according to Bia\l{}ynicki-Birula, and formulate certain noncommutative generalizations.

\subsection{The linearity of torus action problem}

The \textbf{group action linearity problem} asks, generally speaking, whether any action of a given algebraic group on an affine
space is \textbf{linearizable}, i.e., linear in a suitable coordinate system (or, in other words, whether for any such action
there exists an automorphism of the affine space that conjugates the action to a representation). This subject owes its existence largely to the classical work of A. Bia\l{}ynicki-Birula \cite{BB1}, who considered \emph{regular} actions of the
$n$-dimensional torus on the affine space $\mathbb{A}^n$  (over an algebraically closed ground field) and proved that any effective
action is linearizable. Bia\l{}ynicki-Birula's results motivated the study of various analogous instances, such as those that are dealing with the actions of a tori of dimension smaller than that of the affine space, or, alternately, linearity conjectures that arise when the torus is replaced by a different algebraic group. In particular,
Bia\l{}ynicki-Birula himself \cite{BB2} proved that any effective action of an $(n-1)$-dimensional torus on $\mathbb{A}^n$ is
linearizable, and for a while it was believed \cite{KR} that the same was also true for a torus and an affine space of an arbitrary dimension over any field. However, this general linearity conjecture was subsequently disproved by Asanuma \cite{Asanuma}.

\smallskip

Defining a regular group action on an affine space is essentially (i.e. modulo duality) the same as defining an action on its affine coordinate algebra, which is given by $\mathbb{K}[x_1,\ldots,x_n]$. Various
instances of the group action linearity problem therefore admit a purely algebraic reformulation.

\smallskip

With that observation in mind, it becomes natural to attempt to generalize the group action linearity problem to the category of associative algebras over a field. This involves replacing the geometric notions with their algebraic counterparts, and then further replacing them with the associative algebra analogues, in accordance with the commonly accepted wisdom of noncommutative geometry.

\smallskip

The most obvious setting to look for in that regard is provided by the \emph{free associative} $\mathbb{K}$-algebra $F_n(\mathbb{K})$ in $n$ generators ($\mathbb{K}$ is a field; throughout the paper, we will frequently denote $F_n(\mathbb{K})$ also by $F_n$ while also referring to it as the free algebra in contexts involving associative $\mathbb{K}$-algebras). The free algebra $F_n$ in $n$ generators $z_1,\ldots, z_n$ is the $\mathbb{K}$-module
$$
F_n = \mathbb{K}\langle z_1,\ldots, z_n\rangle
$$
consisting of words $z_{i_1}z_{i_2}\ldots z_{i_s}$ in the alphabet $\{z_1,\ldots, z_n\}$, with the algebra multiplication given by the $\mathbb{K}$-linearly extended word concatenation. It is the free object in the category of associative unital $\mathbb{K}$-algebras and can thus be regarded as the associative counterpart of the commutative polynomial algebra $\mathbb{K}[x_1,\ldots, x_n]$. And since the latter algebra serves as a centerpiece in the discussion involving algebraic torus actions on the affine spaces, the direct generalization of the various linearization results to the case of the free associative algebra becomes both natural and pertinent (for references pertaining to the latter point cf. Subsection 1.2 of the present paper). Note that, in fact the free algebra counterpart of the Bia\l{}ynicki-Birula's maximal torus linearization result \cite{BB1} was established in \cite{TA1}.

\smallskip

Given the existence of the free algebra version of the Bia\l{}ynicki-Birula theorem, one may inquire whether various other
instances of the linearity problem (such as the Bia\l{}ynicki-Birula theorem on the action of the $(n-1)$-dimensional torus on $\mathbb{K}[x_1,\ldots,x_n]$) can be generalized to the associative setting in a similar manner. As it turns out, direct adaptation of proof techniques from the commutative realm is
sometimes possible. However, there are certain limitations. For instance, Bia\l{}ynicki-Birula's proof in \cite{BB2} of the
linearity of $(n-1)$-dimensional torus actions uses commutativity in an essential way. Nevertheless, the exceptional nature of the $n=2$ case, specifically the tameness of
the polynomial algebra and the free associative algebra automorphisms in the planar case, is sufficient to provide the existence of results analogous to the original theorems of Bia\l{}ynicki-Birula, as we observe in Section 4 of the present work. We include these results for the sake of completeness of the picture.

\smallskip

The case of the positive-root torus actions on the free algebra is rather interesting, in the sense that its analysis has connections with the Jacobian Conjecture (as well as its free
associative version) and to other various techniques native to the general algebra. In particular, we discuss a certain nontrivial
approach to the problem of invertibility of endomorphisms of the free algebra originated by A.V. Yagzhev \cite{Yag1, Yag2, Yag3,
Yag4} (cf. also \cite{BBRY}). Yagzhev's work is deep and highly nontrivial; a detailed examination of the ideas presented there may
prove extremely beneficial, especially in light of its relation to the Jacobian Conjecture. It is also of great interest in connection with the groundbreaking work of Y.~Tsuchimoto \cite{Tsu1, Tsu2, Tsu3}.

\smallskip

Some of the proof techniques developed in the
classical work of Asanuma \cite[\S 2--5]{Asanuma} admit free associative analogues. This allows for the construction of a framework to prove the existence of non-linearizable torus actions in positive characteristic, analogous to Asanuma's work. Somewhat surprisingly, it also provides insights into the Associative Cancellation Conjecture.

\smallskip

Gupta \cite{Gupta1, Gupta2, Gupta3, Gupta4} has discovered positive-characteristic counterexamples to the Zariski Cancellation problem, building upon Asanuma's result \cite{Asanuma} on non-linearizable torus actions. Some of the intricacies of the Associative Cancellation Conjecture, particularly the possible existence of the positive-characteristic counterexamples and the way the latter are related to non-rectifiable epimorphisms and non-linearizable torus actions, might be elucidated by lifting Gupta's constructions to the free associative algebra!

\smallskip

\smallskip

The present paper serves as a record of our various works focused on the torus action linearity problem and related developments. This exposition largely follows a review pattern, particularly in the case of noncommutative (free algebra) generalizations of known results and techniques from the algebraic geometry, where the nature of these generalizations is noteworthy. The main results of the paper are as follows.

\smallskip

Firstly, we establish the free algebra version of the Bia\l{}ynicki-Birula theorem,
formulated and proved as in \cite{TA1}:
\begin{thm} %\label{BBfree}
Any effective regular (cf. Definition \ref{defactionregularfree}) action $\sigma$ of the algebraic $n$-torus $\mathbb{T}_n$ on the free algebra $F_n$  is linearizable.
\end{thm}

\smallskip

As a side note, which is a straightforward consequence of the classical results of Jung -- Van der Kulk, and Makar-Limanov on the tameness of automorphisms of the free algebra $F_2$ in two indeterminates, we mention the following:

\begin{prop}\label{propauxsection4}
Let $\sigma$ be an effective regular action of the algebraic $r$-torus $\mathbb{T}_r$ on the free algebra $F_2$ in two indeterminates such that the action on the (commutative) polynomial algebra given by the abelianization of $\sigma$ is linearizable. Then $\sigma$ itself is linearizable.
\end{prop}

%Given the existence of a free algebra version of the
%Bia\l{}ynicki-Birula theorem, one may inquire whether various other
%instances of the linearity problem (such as the Bia\l{}ynicki-Birula
%theorem on the action of the $(n-1)$-dimensional torus on
%$\mathbb{K}[x_1,\ldots,x_n]$) can be studied. As it turns out, sometimes,
%direct adaptation of the techniques used in the proof from the commutative realm is
%possible. However, there are certain limitations. For instance, in the Bia\l{}ynicki-Birula's proof \cite{BB2} of the linearity of $(n-1)$-dimensional torus actions, the commutativity has been used in
%an essential way. Nevertheless, as we show in Section \ref{sec4}, a neat bypass of that hurdle can be performed when $n=2$.

In a related development, we study the special class of regular \emph{positive-root torus actions} on the free algebra $F_n$ and obtain the positive answer to the linearity problem.

\begin{thm}\label{thmkrfree-over}
 Any effective regular torus action $\sigma:\mathbb{T}_r\times F_n\to F_n$  with positive roots is linearizable.
\end{thm}

Finally, some of the proof techniques developed in the classical work of Asanuma \cite[\S 2--5]{Asanuma} admit direct free
associative analogs that enable us to obtain sufficient conditions for the existence of \emph{non-linearizable torus actions} in positive characteristic (See Section \ref{sec6}), in complete analogy with Asanuma's results.

\begin{thm}\label{non-linear-exam}
 Let $\mathbb{K}$ be an arbitrary field. If for $n>m$ there exists a non-rectifiable epimorphism $\mathbb{K}\langle z_1,\dots,z_n\rangle\to \mathbb{K}\langle z_1,\dots,z_m\rangle$, then for $1\leq r\leq m+1$ there is a non-linearizable effective regular $\mathbb{T}_r$-action
  $$\phi: \mathbb{K}\langle u, y_1,\ldots, y_m, z_1, \ldots, z_n\rangle\to \mathbb{K}[t_1,\ldots, t_r, t_1^{-1},\ldots, t_r^{-1}]\langle u, y_1,\ldots, y_m, z_1, \ldots, z_n\rangle.$$
\end{thm}

As an interesting accompanying result to the techniques developed during the investigation of Theorem \ref{non-linear-exam}, we obtain a proposition concerning the free associative generalization of the Cancellation Conjecture, as formulated by Drensky and Yu \cite{DrYu}:

\begin{conj}[Associative Cancellation Conjecture, \cite{DrYu}]\label{conjdryu-overview}
Let $R$ be a $\mathbb{K}$-algebra. If
$$
R*\mathbb{K}\langle y\rangle\simeq_{\mathbb{K}} \mathbb{K}\langle
z_1,\ldots, z_{n-1}, y\rangle
$$
then
$$
R\simeq_{\mathbb{K}}\mathbb{K}\langle z_1,\ldots, z_{n-1}\rangle.
$$
\end{conj}

%In accordance with the general ideas presented in the present paper, extending the generalized cancellation conjecture \cite{BMLY, BY} to the associative category becomes an interesting problem. Gupta \cite{Gupta1, Gupta2, Gupta3, Gupta4} has discovered positive-characteristic counterexamples to the Zariski cancellation problem expanding upon Asanuma's result \cite{Asanuma} on non-linearizable torus actions.

\smallskip

The associative generalization of Asanuma's results on Rees algebras, as explored in Section \ref{sec6}, allows us to establish a version of the Associative Cancellation Conjecture for coproducts over a commutative $\mathbb{K}$-algebra $D$. % this place should be change to a pseduo Rees algebra

\begin{thm}\label{thmcancellation-overview}
Given a nonzero element $t\in \mathbb{K}[x]$ and polynomials $f(x)$ and $g(x)$ in $\mathbb{K}[x]$. Let $A=\mathbb{K}[x]\langle t,t^{-1}f(x)\rangle$ and $B=\mathbb{K}[x]\langle t,t^{-1}g(x)\rangle$ be the left pseudo Rees algebras of $(f(x))$ and $(g(x))$ respectively. 

\begin{enumerate}
    \item If $\mathbb{K}[x]/(f(x))\simeq_{\mathbb{K}} \mathbb{K}[x]/(g(x))$,
    then
    $$
    A * \mathbb{K}\langle y\rangle\simeq B * \mathbb{K}\langle y\rangle.
    $$
   \item If $f(x)\neq g(ax+b)$ for all $a,b\in \mathbb{K}$, then $A\not\cong B$.
\end{enumerate}
\end{thm}

Whence, we can establish the following result on the existence of non isomorphic algebras whose free products with a polynomial ring are isomorphic.
\begin{thm}
    There exists two non isomorphic $\mathbb{K}$-algebras $A$ and $B$,
    such that  $$
    A * \mathbb{K}\langle x\rangle \cong B *\mathbb{K}\langle x\rangle.
    $$
\end{thm}

\subsection{Some remarks on the related developments}

In recent years, the linearity of the effective torus actions has been proven to be a useful tool in the study of the geometry of
automorphism groups. In the papers \cite{KBYu} and \cite{KBEYu}, the following result was obtained.
\begin{remark}        \label{commautautthm}
 For $\mathbb{K}$ an algebraically closed field, and $n\geq 3$, every $\Ind$-scheme automorphism $\Phi$ of the $\Ind$-group
$\Aut(K[x_1,\dots,x_n])$ is inner.
\end{remark}

A generalization of this result to the automorphisms of the Cremona group in $d$ variables was given by Zariski homeomorphisms in \cite{UrZ}.

The notions of $\Ind$-variety (or $\Ind$-group in the present context) and $\Ind$-variety morphism were introduced by Shafarevich \cite{Shafarevich}. Note that an $\Ind$-variety is the direct limit of a system whose morphisms are closed embeddings, and the automorphism groups of algebras with polynomial identities, such as the (commutative) polynomial algebra, and of the free (associative) algebra, are archetypal examples, i.e. they are the corresponding direct systems of varieties consist of sets $\Aut^{\leq N}$ of automorphisms of total degree less or equal to a fixed number, with the degree induced by the grading. The morphisms are inclusion maps which are obviously closed embeddings.

Theorem \ref{commautautthm} is proved by means of tame approximation (stemming from the main result of \cite{An}), with the following Proposition, originally due to E. Rips (the article \cite{KBEYu} handles the subtleties of the positive-characteristic case), constituting one of the key results.
\begin{prop} \label{proprips}
Let $\mathbb{K}$ be algebraically closed and let $n\ge 3$. Suppose that $\Phi$ preserves the standard maximal torus action on the commutative polynomial algebra\footnote{That is, the action of the $n$-dimensional torus on the polynomial algebra $\mathbb{K} [x_1,\ldots,x_n]$, which is dual to the action on the affine space.}. Then $\Phi$ preserves all tame automorphisms.
\end{prop}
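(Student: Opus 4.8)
The plan is to exploit the fact that $\Phi$ intertwines the conjugation action of the standard torus $T\cong(\mathbb{K}^{*})^{n}\subset\Aut(\mathbb{K}[x_1,\dots,x_n])$ on the $\Ind$-group of automorphisms. Write $\theta_t\colon x_i\mapsto t_ix_i$ for the automorphism attached to $t=(t_1,\dots,t_n)$; the hypothesis that $\Phi$ preserves the standard maximal torus action means $\Phi(\theta_t)=\theta_t$ for all $t$ (if one only knows $\Phi(T)=T$, a preliminary normalization by an element of the normalizer $N(T)$, i.e.\ a monomial automorphism, reduces to this). Then for every automorphism $\sigma$ one has the equivariance
\[
\Phi(\theta_t\,\sigma\,\theta_t^{-1})=\theta_t\,\Phi(\sigma)\,\theta_t^{-1},
\]
so $\Phi$ commutes with the $T$-conjugation action and respects its weight structure. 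Since $\Phi$ is an $\Ind$-variety automorphism it carries algebraic subgroups to algebraic subgroups, in particular one-parameter unipotent subgroups $\mathbb{G}_a\hookrightarrow\Aut$ to such subgroups; combined with the equivariance, it sends any $T$-normalized $\mathbb{G}_a$ of a given weight to a $T$-normalized $\mathbb{G}_a$ of the same weight.

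First I would identify the elementary automorphisms as the root subgroups of this action. For a monomial $x^{\alpha}=\prod_l x_l^{\alpha_l}$ with $\alpha_i=0$, the homogeneous elementary automorphism $\tau^{c}_{i,\alpha}\colon x_i\mapsto x_i+c\,x^{\alpha}$ (fixing $x_l$ for $l\neq i$) is a one-parameter unipotent subgroup in the parameter $c$, and a direct computation gives
\[
\theta_t\,\tau^{c}_{i,\alpha}\,\theta_t^{-1}=\tau^{\,\chi(t)c}_{i,\alpha},\qquad \chi=\alpha-e_i,\quad \chi(t)=t^{\alpha}t_i^{-1},
\]
so that $U_{i,\alpha}=\{\tau^{c}_{i,\alpha}\}$ is a $T$-root subgroup of nonzero weight $\chi=\alpha-e_i$. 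Because the elementary automorphisms with a fixed index $i$ form a group isomorphic to $(\mathbb{K}[x_1,\dots,\widehat{x_i},\dots,x_n],+)$, in which every element is a product of the monomial ones, the tame group is generated by $T$ together with all of these root subgroups:
\[
\TAut=\bigl\langle\, T,\ U_{i,\alpha}\ :\ 1\le i\le n,\ \alpha_i=0\,\bigr\rangle
\]
(the cases $|\alpha|=0,1$ recovering translations and transvections, hence the whole affine group).

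By the equivariance and the remark above, $\Phi$ maps each $U_{i,\alpha}$ onto a $T$-normalized one-parameter unipotent subgroup of the same weight $\chi=\alpha-e_i$. The decisive step — and the one I expect to be the main obstacle — is the intrinsic classification of such subgroups: one must show that every one-parameter unipotent subgroup of $\Aut(\mathbb{K}[x_1,\dots,x_n])$ normalized by $T$ with a fixed nonzero weight $\chi$ is already contained in $\TAut$ (indeed a product of the monomial elementary subgroups $U_{j,\beta}$ with $\beta-e_j=\chi$). The subtlety is that a single weight $\chi$ may be realized by several monomial directions, so one must rule out the existence of ``wild'' $T$-homogeneous unipotent subgroups of weight $\chi$; here the $\Ind$-structure is essential, since it forces the relevant weight spaces to be finite-dimensional and makes $\Phi$ respect the degree filtration, reducing the problem to a bounded-degree, ultimately combinatorial, computation.

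Granting this classification, $\Phi$ carries the generating set $\{T\}\cup\{U_{i,\alpha}\}$ of $\TAut$ into $\TAut$, and the same applies to $\Phi^{-1}$, whence $\Phi(\TAut)=\TAut$. On each root subgroup $\Phi$ then restricts to a $T$-equivariant automorphism of $\mathbb{G}_a$, that is, multiplication by a scalar $\lambda_\chi\in\mathbb{K}^{*}$ depending only on the weight; applying $\Phi$ to the commutator relations among the root subgroups forces the assignment $\chi\mapsto\lambda_\chi$ to be multiplicative, hence of the form $\lambda_\chi=\chi(s)$ for a single $s\in T$, so that $\Phi$ agrees on $\TAut$ with conjugation by $s$. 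Under the standing normalization this inner contribution is trivial, and we conclude that $\Phi$ fixes every tame automorphism, as claimed.
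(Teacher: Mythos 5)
First, a caveat on the comparison: this paper does not actually prove Proposition \ref{proprips}. It is quoted from \cite{KBYu}, attributed there to E.~Rips, and the text records only that the proof relies on the Bia\l{}ynicki-Birula theorem for the maximal torus; so your proposal can only be judged on its own merits and against that external argument. Your skeleton --- $T$-conjugation equivariance of $\Phi$, identification of the monomial elementary automorphisms $U_{i,\alpha}$ as $T$-root subgroups of weight $\chi=\alpha-e_i$, and the generation $\TAut=\langle T,\,U_{i,\alpha}\rangle$ --- is sound and is the natural way to set the problem up. But as written the argument has a genuine gap, and you flag it yourself: the ``decisive step'', namely that every one-parameter unipotent subgroup of $\Aut(\mathbb{K}[x_1,\ldots,x_n])$ normalized by $T$ with a fixed nonzero weight already lies in $\TAut$, is \emph{granted}, not proved. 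This is not a peripheral technicality; it is precisely the mathematical content of the proposition. Without it, nothing prevents $\Phi$ from carrying some $U_{i,\alpha}$ to a ``wild'' $T$-homogeneous $\mathbb{G}_a$-subgroup, and the conclusion collapses.

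Second, the gap is fillable, and more cheaply than you fear, at least in characteristic zero. A $T$-normalized $\mathbb{G}_a$-subgroup of nonzero weight $\chi$ is generated by a locally nilpotent derivation $D$ homogeneous of weight $\chi$ for the $\mathbb{Z}^n$-grading, hence $D=\sum_j c_j x^{\chi+e_j}\partial/\partial x_j$, the sum running over those $j$ with $\chi+e_j\geq 0$ componentwise. If $\chi$ has two or more negative entries, or an entry $\leq -2$, every term vanishes. If $\chi\geq 0$ and $\chi\neq 0$, iterating $D$ on $x^{\chi}$ (when $\sum_l \chi_l c_l\neq 0$) or on a generator $x_j$ with $c_j\neq 0$ (when $\sum_l \chi_l c_l=0$) shows $D$ is not locally nilpotent unless $D=0$. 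The only surviving case is that $\chi$ has exactly one negative entry, equal to $-1$, at some position $i$; then $D=c\,x^{\chi+e_i}\partial/\partial x_i$ with $(\chi+e_i)_i=0$, i.e.\ the subgroup is exactly $U_{i,\chi+e_i}$. Incidentally this disposes of your worry that ``a single weight $\chi$ may be realized by several monomial directions'': the index $i$ is the unique coordinate where $\chi$ is negative, so $\chi$ determines $(i,\alpha)$. (In positive characteristic this step is genuinely harder: $\mathbb{G}_a$-actions are not classified by single locally nilpotent derivations, and algebraic endomorphisms of $\mathbb{G}_a$ include Frobenius twists, so weights are preserved only up to $p$-power multiples.) Finally, your closing claim overreaches: even after normalizing so that $\Phi(\theta_t)=\theta_t$ for all $t$, the automorphism $\Phi$ may still act on $\TAut$ as conjugation by some $s\in T$, which fixes the torus pointwise but rescales every $U_{i,\alpha}$; so ``$\Phi$ fixes every tame automorphism'' cannot follow from the stated hypothesis and is false in general. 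The correct (and intended) reading of the conclusion is the setwise statement $\Phi(\TAut)=\TAut$, which your argument does reach once the classification above is supplied.
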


The proof relies on the Bia\l{}ynicki-Birula theorem on the maximal torus action. In a similar fashion, the paper \cite{KBYu} examines
the $\Ind$-group $\Aut \mathbb{K}\langle z_1,\ldots, z_n\rangle $ of automorphisms of the free algebra $\mathbb{K}\langle z_1,\ldots,
z_n\rangle$ in $n$ variables, and establishes results completely analogous to Theorem \ref{commautautthm} and Proposition
\ref{proprips}.\footnote{The free case is responsive to the above approach when $n>3$.} In accordance with that, the free associative
version of the Bia\l{}ynicki-Birula theorem was required.

\smallskip

Such an analog is indeed valid, and it was established  in the article \cite{TA1}. As stated, we will review the proof of this result in the present paper.

\subsection*{Acknowledgments}
\indent The authors thank the anonymous referees for their careful reading and valuable comments, which have improved the readability of this article.

The work is supported by the Russian Science Foundation (\href{https://rscf.ru/project/22-11-00177/}{No. 22-11-00177}).

L.R. is supported by the Israel Science Foundation (No. 1994/20).

W.-C. Z. would like to thank B.-C. Liu, and the Department of Mathematics of the Southern University of Science and Technology for their hospitality during his visit. W.-C. Z. is also supported by the GuangDong Basic and Applied Basic Research Foundation (Nos. 2022A1515110634, 2023A1515011690) and the Characteristic Innovation Project of Guangdong Provincial Department of Education (No. 2023KTSCX145).

F.R. is supported by the Azarbaijan Shahid Madani University under the grant contract No. 117.d.22844 - 08.07.2023. F.R. is also partially supported by a grant from the Institute for Research in Fundamental Sciences (IPM), with the grant (No. 1403170014). Also F.R. would like to express his warmest thanks and gratitude to the late Professor N. A. Vavilov, whom he sees himself indebted for the continuation of his scientific life, as being a member of his 2021 PhD defense committee.

A.E. is thankful to V.L. Dolnikov for fruitful discussions during the preliminary stages of this project and is especially grateful to the late Professor N.A. Vavilov for his many helpful remarks and attention to this study as well as for having been a member of A.E.'s 2019 PhD defense committee.

\section{Actions of algebraic tori}

In this section we recall basic definitions of the theory of torus actions, as formulated by Bia\l{}ynicki-Birula \cite{BB1, BB3} and others (cf. \cite{RefB1}).

\smallskip

Let $\mathbb{K}$ denote the ground field. Throughout, we assume $\mathbb{K}$ to be algebraically closed. There are no specific assumptions on the characteristic of $\mathbb{K}$ in Sections 3 -- 6.

Let $\mathbb{G}_m = \Spec \mathbb{K}[t, t^{-1}]$ be the algebraic group corresponding (in terms of the group of $\mathbb{K}$-points) to $\mathbb{K}^{\times}$ -- the multiplicative group of the ground field.

\begin{Def} \label{def-alg-torus}
An $r$-dimensional algebraic torus (or an $r$-torus) $\mathbb{T}_r$ is an algebraic group isomorphic (over $\mathbb{K}$) to the product $\mathbb{G}_m^r = \mathbb{G}_m\times\ldots\times\mathbb{G}_m$ of $r$ copies of the multiplicative group of $\mathbb{K}$.
\end{Def}

\smallskip

%It is usually denoted by $G_m$ and is the affine algebraic group  $\text{Spec}(\mathbb{K}[t,t^{-1}])$. An $n$-dimensional algebraic torus over $\mathbb{K}$ is an algebraic group $\mathbb{T}_n$ isomorphic to a finite direct product $\mathbb{K}^{\times}\times \ldots \times \mathbb{K}^{\times}$ which is a type of commutative affine algebraic group.

Denote by $\mathbb{A}^n$ the affine space of dimension $n$ over $\mathbb{K}$.

\begin{Def} \label{defaction}
A (left, geometric) $r$-torus action is a mapping
$$
\sigma: \mathbb{T}_r\times \mathbb{A}^n\rightarrow \mathbb{A}^n,
$$
such that
%for every $t\in \mathbb{T}_r$ the map $\sigma(t,-)$ is an automorphism of $\mathbb{A}^n$ and such that
the usual group action axioms (of identity and compatibility) are in place:
$$
\sigma(1,x)=x,\;\;\sigma(t_1,\sigma(t_2,x))=\sigma(t_1t_2,x).
$$
\end{Def}
The group action axioms imply that for every $t\in \mathbb{T}_r$ the map $\sigma(t, -)$ is a bijection of $\mathbb{A}^n$.

\begin{Def}[Effective action]\label{defactioneffective}
A torus action $\sigma$ is called effective if for every $t\neq 1$ there is an element $x\in \mathbb{A}^n$ such that $\sigma(t,x)\neq x$.
\end{Def}

The property of a torus action $\sigma$ that is most pertinent to the present paper (as well as, arguably, to the original works by Bia\l{}ynicki-Birula) is given by the following definition.
\begin{Def}[Regular action]\label{defactionregular}
A torus action $\sigma$ is regular if the mapping
$$
\sigma: \mathbb{T}_r\times \mathbb{A}^n\rightarrow \mathbb{A}^n.
$$
is a morphism of varieties.
\end{Def}

\smallskip

\begin{Def}\label{defcharacter}
Given an algebraic $r$-torus $\mathbb{T}_r$, the group of characters $\widehat{T}_r$ is the group of algebraic group morphisms
$$
\chi: \mathbb{T}_r\rightarrow \mathbb{G}_m.
$$
\end{Def}

It is well known that if $\mathbb{T}_r = (\mathbb{G}_m)^r$ and $t = (t_1,\ldots, t_r)\in \mathbb{T}_r$ then $\chi(t) = t_1^{\alpha_1}\ldots t_r^{\alpha_r}$ with $\alpha_i\in \mathbb{Z}$ integers -- that is, the group $\widehat{T}_r$ of characters of the algebraic torus $\mathbb{T}_r = (\mathbb{G}_m)^r$ is isomorphic to the (additive) group $\mathbb{Z}^r$.

\smallskip

Any regular $r$-torus action $\sigma$ corresponds to a homomorphism of $\mathbb{K}$-algebras
$$
\tilde{\sigma}: A\rightarrow A\otimes \mathbb{K}[t_1,\ldots, t_r, t_1^{-1},\ldots, t_r^{-1}]\simeq A[t_1,\ldots, t_r, t_1^{-1},\ldots, t_r^{-1}],
$$
where the algebra $A=\mathcal{O}(\mathbb{A}^n)$ of globally regular functions is identified with the polynomial algebra $\mathbb K[x_1,\ldots, x_n]$. For the sake of brevity, we denote this action by $\sigma$ as well whenever no confusion arises from the choice of notation.

For $t\in \mathbb{T}_r$ and $f\in A$, consider $\sigma(f) = \sum \chi(t)f_{\chi}$. It is well known \cite{RefB, KR} that the assignment $f\mapsto (\ldots, f_{\chi},\ldots)$ induces a direct sum decomposition
$$
A = \bigoplus_{\chi} A_{\chi}\;\;\text{where}\;\; A_{\chi}=\{f\in A: \sigma(f) = \chi(t)f\},
$$
where the direct sum is indexed by the characters $\chi\in \widehat{T}_r$. This is essentially equivalent to the regularity of $\sigma$.

\smallskip

We are interested in defining the above analogue torus actions on the free associative algebra, rather than the affine space, as a generalization of the above discussions. Note that we are interested only in the case of finitely many free generators.

%We are interested in the free associative algebra generalization of the above picture. %As always, the free associative algebra $F_n(\mathbb{K})\equiv F_n = \mathbb{K}\langle z_1,\ldots, z_n\rangle$ is the free $\mathbb{K}$-module generated by the free monoid $\{z_1,\ldots, z_n\}^*$ of \emph{generators} with multiplication given by $\mathbb{K}$-linearly extended concatenation.

%\begin{Def}\label{deffreealgebra}
%The free associative algebra $F_n(\mathbb{K}) = F_n$ in $n$ generators $z_1,\ldots, z_n$ is given by the $\mathbb{K}$-module
%$$
%F_n = \mathbb{K}\langle z_1,\ldots, z_n\rangle
%$$
%of words $z_{i_1}z_{i_2}\ldots z_{i_s}$ in the alphabet $\{z_1,\ldots, z_n\}$ (the free generators) together with the %associative algebra multiplication given by the $\mathbb{K}$-linearly extended word concatenation.
%\end{Def}

\smallskip

The concept of \emph{regular} torus action admits the following generalization to the case of the free associative algebra.

\begin{Def}[Regular action on free algebra]\label{defactionregularfree}
A regular action of an $r$-torus on the free associative algebra $F_n(\mathbb{K})$ is a $\mathbb{K}$-algebra homomorphism
$$
\sigma: F_n(\mathbb{K})\rightarrow F_n(\mathbb{K})\otimes \mathbb{K}[t_1,\ldots, t_r, t_1^{-1},\ldots, t_r^{-1}]
$$
with the property that every element $\sigma(f)(t)$ is uniquely a sum of the form $\sum \chi(t)f_{\chi}$.
\end{Def}

In the sequel, we employ the following short-hand notation for
monomials in the free algebra. The powers of a generator $z_i=z$  are
defined in an obvious manner. Any monomial $z_{i_1}z_{i_2}\ldots z_{i_k}$ can
then be written in a reduced form where subwords  $zz\ldots z$ are
replaced by powers.

We then write
\begin{equation}\label{eq1}
z^I = z_{j_1}^{i_1}z_{j_2}^{i_2}\ldots z_{j_k}^{i_k}
\end{equation}
where by $I$ we mean an assignment of $i_k$ to the subscript $j_k$
in the word $z^I$. Sometimes we refer to $I$ as a ``multi-index,''
although the terminology is not entirely accurate. If $I$ is such a
multi-index,  $|I|$ is defined as the sum $i_1+\cdots+ i_k$.
\smallskip

In \cite{BB1}, Bia\l{}ynicki-Birula proved the following two
theorems, for $\mathbb{K}$ an algebraically closed field.

\begin{thm}(\cite[Theorem 1]{BB1}) \label{BBthm1}
Any regular action of $\mathbb{T}_n$ on $\mathbb{A}^n$ has a fixed
point.
\end{thm}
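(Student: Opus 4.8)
The plan is to pass to the dual, algebraic picture and reduce the existence of a fixed point to a statement about the invertible elements of $\mathbb{K}[x_1,\dots,x_n]$. First I would dualize the action $\sigma$ to a coaction on the coordinate algebra $A=\mathbb{K}[x_1,\dots,x_n]$; since $\mathbb{T}_n$ is diagonalizable, this coaction is the same as a grading of $A$ by the character lattice $M=X(\mathbb{T}_n)\cong\mathbb{Z}^n$,
$$
A=\bigoplus_{m\in M}A_m,\qquad A_0=A^{\mathbb{T}_n},
$$
compatible with multiplication, in which the only constants occur in weight $0$ (a nonzero constant is $\mathbb{T}_n$-invariant). A point $p$ is fixed precisely when its maximal ideal is $\mathbb{T}_n$-stable, equivalently when every homogeneous function of nonzero weight vanishes at $p$. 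Hence the fixed locus is the closed subscheme $V(I)$ with $I$ the ideal generated by $\bigoplus_{m\ne0}A_m$, and by the Nullstellensatz it is nonempty if and only if $1\notin I$; taking weight-$0$ parts, this is the assertion that $1\notin\sum_{m\ne0}A_{-m}A_m$.

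Next I would reduce the rank. Because the action is rational, $x_1,\dots,x_n$ lie in a finite-dimensional $\mathbb{T}_n$-stable subspace that generates $A$, so only finitely many weights $w_1,\dots,w_r$ occur among a set of algebra generators, and every occurring weight lies in the monoid they generate. Choosing a one-parameter subgroup $\lambda\in X_*(\mathbb{T}_n)$ with $\langle\lambda,w_i\rangle>0$ for all nonzero $w_i$ forces $\langle\lambda,m\rangle\ne0$ for every nonzero occurring weight $m$, so that $(\mathbb{A}^n)^{\lambda}=(\mathbb{A}^n)^{\mathbb{T}_n}$ and it suffices to treat a single regular $\mathbb{G}_m$-action. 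Such a $\lambda$ exists when the occurring nonzero weights lie in an open half-space; the remaining ``symmetric'' case I would handle by splitting off, by induction on the rank, the maximal subtorus whose weight set is stable under $m\mapsto-m$.

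The crux is the $\mathbb{G}_m$-case, and this is the step I expect to be the main obstacle. Suppose a regular $\mathbb{G}_m$-action on $\mathbb{A}^n$ had no fixed point. Then every orbit is one-dimensional, and since the boundary of a one-dimensional orbit consists of fixed points, every orbit is in fact closed. As $\mathbb{G}_m$ is linearly reductive, the quotient $\pi\colon\mathbb{A}^n\to\mathbb{A}^n/\!/\mathbb{G}_m=\Spec A_0$ is then a geometric quotient exhibiting $\mathbb{A}^n$ as a $\mathbb{G}_m$-fibration (a torsor up to finite stabilizers) over $\Spec A_0$. One checks that this forces each graded piece $A_k$ to be an invertible $A_0$-module with $A_kA_{-k}=A_0$, so that $aa'=1$ for suitable $a\in A_k$, $a'\in A_{-k}$ with $k\ne0$ --- a nonconstant invertible element of $\mathbb{K}[x_1,\dots,x_n]$. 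This contradicts $\mathbb{K}[x_1,\dots,x_n]^{\times}=\mathbb{K}^{\times}$, and the contradiction produces the desired fixed point.

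The difficulty is concentrated entirely in this last passage: it is here that affineness of the space --- the triviality of its units and Picard group --- is indispensable, and it is exactly this that rules out the $\mathbb{G}_a$- and hyperbola-type examples (such as $\mathbb{G}_m$ acting freely on $\{xy=1\}$) where no fixed point exists. A secondary, more technical nuisance is that the action is not assumed linear, so the weight grading bears no relation to the total-degree grading and naive lowest-term arguments are unavailable; this is also what makes the rank reduction of the second step require the inductive splitting rather than a single generic cocharacter.
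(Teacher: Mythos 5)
First, a point of orientation: the paper itself contains \emph{no} proof of Theorem \ref{BBthm1} --- it is quoted as a classical result of Bia\l{}ynicki-Birula \cite{BB1} and used as a black box (for instance in the proof of Lemma \ref{fixedorigin}). So your proposal can only be judged on its own merits, and on those merits it has a genuine gap, located exactly where you predicted the crux would be: the passage from the geometric-quotient picture to a nonconstant unit of $\mathbb{K}[x_1,\ldots,x_n]$.

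Here is the problem. Grant that the absence of fixed points makes every $\mathbb{G}_m$-orbit closed, that $\pi\colon\mathbb{A}^n\to Y=\Spec A_0$ is a geometric quotient, and (in the free case) that multiplication induces isomorphisms $A_k\otimes_{A_0}A_{-k}\simeq A_0$ of invertible $A_0$-modules. This yields only a relation $\sum_i a_ib_i=1$ with $a_i\in A_k$, $b_i\in A_{-k}$ --- which is precisely what the Nullstellensatz already handed you from the assumed emptiness of the fixed locus, so it is no new information. To extract a \emph{single} pair with $aa'=1$, i.e.\ a nonconstant unit, you need $A_k$ to be a free $A_0$-module, that is, trivial in $\operatorname{Pic}(Y)$; and neither $\mathcal{O}(\mathbb{A}^n)^{\times}=\mathbb{K}^{\times}$ nor $\operatorname{Pic}(\mathbb{A}^n)=0$ gives that, because these properties of the total space do not descend to the quotient. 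Indeed, for a $\mathbb{G}_m$-torsor $X\to Y$ one has the exact sequence
$$
1 \to \mathcal{O}(Y)^{\times} \to \mathcal{O}(X)^{\times} \to \mathbb{Z} \to \operatorname{Pic}(Y) \to \operatorname{Pic}(X),
$$
so with $X=\mathbb{A}^n$ the hypothetical quotient would simply have $\operatorname{Pic}(Y)\cong\mathbb{Z}$, generated by $[A_1]$, with no contradiction anywhere. The inference you need is in fact false for general smooth affine varieties with constant units: take $Y_0=E\setminus\{P\}$ an affine elliptic curve over $\mathbb{C}$ (all of whose units are constants) and $M$ a non-torsion line bundle on it; then $X=\Spec\bigl(\bigoplus_{k\in\mathbb{Z}}M^{\otimes k}\bigr)$ is a smooth affine surface with $\mathcal{O}(X)^{\times}=\mathbb{K}^{\times}$, carrying a free $\mathbb{G}_m$-action with all orbits closed and no fixed point. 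So constancy of units is not the property of affine space that forbids the fixed-point-free scenario; closing the argument needs a finer input (over $\mathbb{C}$, Smith theory for the compact subtorus, or the remark that a finite-dimensional $Y$ cannot have the cohomology of $B\mathbb{G}_m$), and that is where the real content of \cite{BB1} lies. A secondary, lesser gap: your rank reduction in the ``symmetric'' case is only a declaration of intent --- the fixed locus of a subtorus is merely a smooth affine variety, not known to be an affine space (that is essentially the linearization problem itself), so an induction on the rank of the torus has nothing to run on.
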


\begin{thm}(\cite[Theorem 2]{BB1}) \label{BBthm2}
Any effective and regular action of $\mathbb{T}_n$ on $\mathbb{A}^n$
is a representation in some coordinate system.
\end{thm}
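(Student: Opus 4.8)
The plan is to reduce the statement to a linear-algebra computation on the coordinate ring by exploiting that an algebraic torus is diagonalizable (linearly reductive). Write $A = \mathbb{K}[x_1,\dots,x_n]$ for the coordinate algebra of $\mathbb{A}^n$. First, by Theorem \ref{BBthm1} the action admits a fixed point $p$; composing with the translation automorphism $x \mapsto x - p$ (a coordinate change on $\mathbb{A}^n$) I may assume the fixed point is the origin, so that the maximal ideal $\mathfrak{m} = (x_1,\dots,x_n)$ is invariant under the coaction $\sigma^{*} : A \to \mathcal{O}(\mathbb{T}_n) \otimes A$.

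Next I would diagonalize. Since $\mathbb{T}_n$ is diagonalizable, the coaction decomposes $A$ into weight spaces indexed by the character lattice $X(\mathbb{T}_n) \cong \mathbb{Z}^n$,
\[
A = \bigoplus_{\chi \in \mathbb{Z}^n} A_\chi, \qquad A_\chi = \{\, a \in A : t\cdot a = \chi(t)\, a \,\},
\]
turning $A$ into a $\mathbb{Z}^n$-graded algebra for which $\mathfrak{m}$, and hence $\mathfrak{m}^2$, are homogeneous ideals. The induced action on the cotangent space $V = \mathfrak{m}/\mathfrak{m}^2$ is then a genuine linear representation, and $V$ splits into one-dimensional weight spaces with weights $\chi_1,\dots,\chi_n$. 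The first substantive step is to show that effectiveness, together with the equality of the torus rank and the ambient dimension, forces $\chi_1,\dots,\chi_n$ to be a $\mathbb{Z}$-basis of the character lattice: any proper subgroup they generate would, upon dualizing, produce a nontrivial (possibly finite) subgroup of $\mathbb{T}_n$ acting trivially near the fixed point, contradicting effectiveness.

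With the basis $\chi_1,\dots,\chi_n$ in hand, I would construct the linearizing coordinates by lifting. Because the quotient map $\mathfrak{m} \to V$ is graded and $\mathfrak{m} \cap A_{\chi_i}$ surjects onto $V_{\chi_i}$, I can choose for each $i$ a weight vector $y_i \in \mathfrak{m} \cap A_{\chi_i}$ whose class in $V$ is the $i$-th basis vector. By construction each $y_i$ is a semi-invariant, $t \cdot y_i = \chi_i(t)\, y_i$, and the differentials of the $y_i$ at the origin are linearly independent. It then remains to verify that $(y_1,\dots,y_n)$ is a coordinate system, i.e. that $\mathbb{K}[y_1,\dots,y_n] = A$ and that the map $\Phi : x \mapsto (y_1,\dots,y_n)$ is an automorphism of $\mathbb{A}^n$; once this is done, the relations $t\cdot y_i = \chi_i(t)\, y_i$ exhibit the conjugated action as the diagonal representation with weights $\chi_i$, which is exactly the desired conclusion.

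I expect this last step to be the main obstacle, since independence of the linear parts only makes $\Phi$ \'etale at the origin, whereas I need a \emph{global} automorphism of affine space. To bridge the gap I would pick a generic one-parameter subgroup $\lambda : \mathbb{K}^{\times} \to \mathbb{T}_n$ with $\langle \chi_i, \lambda\rangle > 0$ for all $i$ (possible precisely because the $\chi_i$ form a basis), and argue that the induced $\mathbb{K}^{\times}$-action is attracting, so that every weight $\chi$ with $A_\chi \neq 0$ satisfies $\langle \chi,\lambda\rangle \geq 0$, with equality only for constants. This endows $A$ with a connected grading $A = \bigoplus_{d\geq 0} A^{(d)}$ with $A^{(0)} = \mathbb{K}$, to which the graded Nakayama lemma applies: elements of $\mathfrak{m}$ whose images span $\mathfrak{m}/\mathfrak{m}^2$ generate $\mathfrak{m}$ as an ideal, hence $A$ as an algebra. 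The delicate point, and the crux of the whole argument, is establishing the attracting property — equivalently, that the limits $\lim_{s\to 0}\lambda(s)\cdot x$ exist for all $x$ — which is where the hypotheses that the action be both effective and of maximal dimension are genuinely used, and where the analysis departs from the small-torus case.
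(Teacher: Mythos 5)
Your overall strategy (fixed point, weight-space decomposition of the coordinate ring, lifting semi-invariant coordinates, graded Nakayama) is a legitimate and well-known route, but as written it has a genuine gap at exactly the point you flag yourself: the attracting property. You need every weight $\chi$ with $A_\chi\neq 0$ to satisfy $\langle\chi,\lambda\rangle\geq 0$, with equality only for the constants, but you give no argument for this, and it does not follow from positivity of the $n$ cotangent weights at the fixed point: positivity on $\mathfrak{m}/\mathfrak{m}^2$ is a statement about the linear part only, and a priori the coordinate ring could contain semi-invariants whose weights pair negatively with $\lambda$. In fact this global non-negativity of weights is essentially equivalent to linearizability itself -- once the action is linearized, every weight lies in the semigroup generated by $\chi_1,\dots,\chi_n$, and conversely (as your Nakayama step shows) non-negativity yields linearization -- so deferring it means the substance of the theorem is missing, not merely a technical verification. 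The known ways to fill this hole (for instance: an effective $\mathbb{T}_n$-action on $\mathbb{A}^n$ has a dense orbit, so $A$ embeds as a sub-semigroup algebra of $\mathbb{K}[\mathbb{Z}^n]$ with each weight space one-dimensional, and the weight semigroup is pointed because the polynomial ring has only trivial units) amount to a different proof, not a completion of yours.

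A secondary problem: your claim that effectiveness forces $\chi_1,\dots,\chi_n$ to be a $\mathbb{Z}$-basis of the character lattice is false in positive characteristic, and the theorem is stated over an arbitrary algebraically closed field: on $\mathbb{A}^1$ in characteristic $p$ the action $t\cdot z=t^pz$ is effective on points, yet its weight is $p$. What is true, and what suffices for choosing $\lambda$, is linear independence of the weights; but even that requires showing that a subtorus acting trivially on $\mathfrak{m}/\mathfrak{m}^2$ acts trivially on all of $\mathbb{A}^n$ -- the implication you assert with ``acting trivially near the fixed point'' but do not prove. That implication is precisely the content of Lemma \ref{lem1} of the paper, established there by the induction based on $\sigma(t^2)=\sigma(t)\circ\sigma(t)$. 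Note finally that the paper's argument (given for the free analogue, Theorem \ref{BBfree}, following Bia\l{}ynicki-Birula's original proof) sidesteps your crux entirely: after proving nonsingularity of the weight matrix, it constructs the conjugating map $\beta$ directly as the weight-zero component of $\tau(t^{-1})\circ\sigma(t)$ and proves $\beta$ is an automorphism via a degree bound in the power-series completion, with no positivity of weights needed anywhere.
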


%The term ``regular'' is  understood here in the algebraic-geometric
%context of ``regular function.'' (Bia\l{}ynicki-Birula also
%considered birational actions.)

%\smallskip

%In the following section (dedicated to the proof of the free algebra version of Theorems \ref{BBthm1} and \ref{BBthm2}), the ground field is algebraically closed.

\medskip

As mentioned above, an algebraic group action on $\mathbb{A}^n$ is essentially the same as the corresponding action by automorphisms on the algebra
$\mathbb{K}[x_1,\ldots,x_n]$
of coordinate functions. In other words, it is a group homomorphism
$$
\sigma: \mathbb{T}_n\rightarrow \Aut \mathbb{K}[x_1,\ldots,x_n],
$$
and is effective if and only if $\Ker\sigma = \lbrace 1\rbrace$.

The polynomial algebra is a quotient of the free algebra
$$
F_n = \mathbb{K}\langle z_1,\ldots,z_n\rangle
$$
by the commutator ideal $I$ (the two-sided ideal generated by all
elements of the form $fg-gf$). The definition of torus action on the
free algebra is thus purely algebraic.

In this paper we establish the free algebra version of the
Bia\l{}ynicki-Birula theorem, formulated as follows:
\begin{thm} \label{BBfree}
Any effective action $\sigma$ of the algebraic $n$-torus
$\mathbb{T}_n$ on the free algebra $F_n$  is linearizable.
\end{thm}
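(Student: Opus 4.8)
The plan is to follow the architecture of Bia{\l}ynicki-Birula's classical argument, replacing the ingredient ``complete reducibility of torus representations'' by the observation that the algebraic torus $\mathbb{T}_n$ is diagonalizable, hence linearly reductive, a property that is insensitive to the (non)commutativity of the algebra on which the torus acts. First I would reduce to an action fixing the origin: by the free-algebra analogue of the fixed-point theorem (Theorem~\ref{BBthm1}, established as Lemma~\ref{fixedorigin}) the action has a common fixed point $(c_1,\dots,c_n)$, and conjugating by the affine automorphism $z_i\mapsto z_i-c_i$ I may assume the augmentation $\epsilon\colon z_i\mapsto 0$ is $\sigma$-invariant, i.e. every $\sigma(t)$ preserves the augmentation ideal $\mathfrak{m}=(z_1,\dots,z_n)$. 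Consequently $\sigma(t)$ preserves every power $\mathfrak{m}^k$, hence the whole $\mathfrak{m}$-adic filtration, and induces a graded action on $\gr F_n=\bigoplus_k \mathfrak{m}^k/\mathfrak{m}^{k+1}$, each graded piece being a finite-dimensional $\mathbb{T}_n$-module.

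Since $\mathbb{T}_n$ is diagonalizable, every finite-dimensional invariant subspace splits into weight spaces indexed by characters $\chi\in X^{*}(\mathbb{T}_n)\cong\mathbb{Z}^n$; in particular the degree-one part $V=\mathfrak{m}/\mathfrak{m}^2\cong\mathbb{K}^n$ decomposes as $\bigoplus_i \mathbb{K}\bar u_i$ with $\sigma(t)\bar u_i=\chi_i(t)\bar u_i$. The heart of the argument is to lift this weight basis of $V$ to genuine weight vectors $u_1,\dots,u_n\in\mathfrak{m}$ satisfying $u_i\equiv \bar u_i\pmod{\mathfrak{m}^2}$ and $\sigma(t)u_i=\chi_i(t)u_i$. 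Linear reductivity makes each surjection of $\mathbb{T}_n$-modules $\mathfrak{m}/\mathfrak{m}^{k+1}\twoheadrightarrow V$ equivariantly split, so such a lift exists in every finite quotient, and local finiteness of the action lets these lifts stabilize to actual elements $u_i$ of $F_n$. Because $u_1,\dots,u_n$ reduce to a basis of $\mathfrak{m}/\mathfrak{m}^2$, they generate $F_n$, and the endomorphism $\psi\colon z_i\mapsto u_i$ induces the identity on $\gr F_n$, hence is bijective, i.e. an automorphism. Then $\psi^{-1}\sigma(t)\psi$ sends $z_i=\psi^{-1}(u_i)$ to $\psi^{-1}(\chi_i(t)u_i)=\chi_i(t)z_i$, so the conjugated action is the diagonal linear action with weights $\chi_i$, which is precisely linearizability.

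The hard part will be establishing that the $\mathbb{T}_n$-action on the infinite-dimensional algebra $F_n$ is locally finite and rational --- that each $\sigma(t)(z_i)$ lies in a finite-dimensional $\mathbb{T}_n$-stable subspace on which the torus acts through finitely many characters --- so that the weight decomposition and the lifting-through-the-filtration step genuinely produce elements of $F_n$ and not merely of its $\mathfrak{m}$-adic completion. In the commutative case this is routine, but here I must verify it directly from the definition of $\sigma$ as a morphism, keeping careful track of the fact that automorphisms of $F_n$ need not preserve degree. Effectiveness enters at the end: it forces the character matrix $(\chi_i)$ to be invertible over $\mathbb{Z}$, so that $\bigcap_i\Ker\chi_i$ is trivial and the linearized action is a faithful representation; it is also what excludes degenerate behaviour in the fixed-point reduction. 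I expect the remaining verifications --- the generation lemma for $F_n$ and the bijectivity of $\psi$ via the graded argument --- to be routine.
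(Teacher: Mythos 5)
Your construction of the intertwiner $\psi\colon z_i\mapsto u_i$ is sound (given the weight decomposition, $\sigma(t)\circ\psi=\psi\circ\tau(t)$ holds on generators, hence everywhere), and it is essentially the same object as the map $\beta$ the paper builds by extracting the weight-zero Fourier component of $\tau(t^{-1})\circ\sigma(t)$. But the step on which everything rests --- ``$\psi$ induces the identity on $\gr F_n$, hence is bijective'' --- is a genuine gap, and in fact a false inference. First, $\gr\psi$ is not the identity but the linear substitution $\bar z_i\mapsto\bar u_i$ (harmless, since it is invertible). Second, and fatally: an endomorphism preserving the $\mathfrak{m}$-adic filtration and inducing an automorphism of $\gr F_n$ is injective, but need not be surjective. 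The endomorphism $z_1\mapsto z_1+z_1^2$, $z_j\mapsto z_j$ for $j\geq 2$ (or $x\mapsto x+x^2$ on $\mathbb{K}[x]$) induces the identity on the associated graded algebra, yet its inverse exists only in the $\mathfrak{m}$-adic completion; $z_1$ is not in its image. Surjectivity of the intertwiner is precisely the hard content of the theorem, and it cannot be had for free from the graded picture.

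The missing ingredient is effectiveness, which you defer ``to the end'' for faithfulness of the linearized representation; in reality it must be used in the middle, to prove surjectivity. Concretely, one first needs that the characters $\chi_1,\ldots,\chi_n$ are linearly independent in $X^*(\mathbb{T}_n)\cong\mathbb{Z}^n$ --- this is the paper's Lemma \ref{lem1}, proved by a non-trivial induction (a one-dimensional subtorus acting trivially on the linear part is shown to act trivially on all of $F_n$, contradicting effectiveness); it is not a formal consequence of faithfulness as your sketch suggests. Granted independence, surjectivity follows in either of two ways: the paper's route (Lemma \ref{lem2}) inverts $\hat\psi$ in the completion and uses the non-singular character matrix together with the uniform bound $N=\sup_t\Deg\sigma(t)$ to force all coefficients of the formal inverse in degrees $>N$ to vanish, so the inverse is polynomial; alternatively, within your framework, independence implies no $\chi_i$ is a sum of two or more of the $\chi_j$, hence the weight-$\chi_i$ component of $\mathfrak{m}^2$ is zero and, more generally, each weight space $F_n^{\chi}$ with $\chi=\sum_j c_j\chi_j$ embeds into the single graded piece $(\mathfrak{m}^{|c|}/\mathfrak{m}^{|c|+1})^{\chi}$ and is therefore spanned by the words in the $u_j$ of content $c$; so the $u_j$ generate $F_n$ and $\psi$ is onto. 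Note that without this input your own construction can genuinely fail: if some $\chi_j$ were trivial (excluded only by Lemma \ref{lem1}), the lifting step could produce $u_1=z_1+z_1z_2$, $u_2=z_2$, giving a non-surjective $\psi$. Finally, the part you flag as hard --- local finiteness and rationality --- is actually routine: the regular action has a finite Fourier expansion $\sigma(t)(z_i)=\sum_m g_{i,m}(z)\,t_1^{m_1}\cdots t_n^{m_n}$, and the span of the finitely many coefficient polynomials $g_{i,m}$ is a finite-dimensional invariant subspace.
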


\smallskip

The linearity (or linearization) problem, as it has become known
since Kambayashi, asks whether all (effective, regular) actions of a
given type of algebraic groups on the affine space of given
dimension are linearizable. According to Theorem \ref{BBfree}, the
linearization problem extends to the noncommutative setting. Several
 results concerning the (commutative) linearization problem are
summarized below.

\begin{enumerate}
    \item Any effective regular torus action on $\mathbb{A}^2$ is linearizable (Gutwirth \cite{RefB}).
    \item Any effective regular torus action on $\mathbb{A}^n$ has a fixed point (Bia\l{}ynicki-Birula (\cite[Theorem 1]{BB1})).
    \item Any effective regular action of $\mathbb{T}_{n-1}$ on $\mathbb{A}^n$ is linearizable (Bia\l{}ynicki-Birula (\cite[Theorem]{BB2})).
    \item Any (effective, regular) one-dimensional torus action (i.e., action of $\mathbb{K}^{\times})$ on $\mathbb{A}^3$ is linearizable (Koras and Russell \cite{KoRu2}).
   \item $\mathbb C^*$-actions on $\mathbb C^3$ are linearizable (Kaliman, Koras,  Makar-Limanov, and
   Russell \cite[Linearization Theorem]{KMMLR}).
    \item If the ground field is not algebraically closed, then a torus action on $\mathbb{A}^n$ need not be linearizable.
    In \cite{Asanuma}, Asanuma proved that over any field $\mathbb{K}$, if there exists a non-rectifiable closed embedding from $\mathbb{A}^{m}$ into $\mathbb A^{n}$, then there exist non-linearizable effective actions of $(\mathbb{K}^{\times})^r$ on $\mathbb A^{1+n+m}$ for $1\le r\le 1+m$.
    \item When $\mathbb {K}$ is infinite and has positive characteristic, there are examples of non-linearizable torus actions on $\mathbb{A}^{n}$ (Asanuma \cite{Asanuma}).
\end{enumerate}

\begin{remark} \label{nonessential1}
A closed embedding $\iota:\mathbb A^m\to\mathbb A^n$ is said to be rectifiable if it is conjugate to a linear embedding by an automorphism of $\mathbb A^n$.
\end{remark}

As can be inferred from the above review, the context of the
linearization problem is rather broad, even for torus actions. The
regulating parameters are the dimensions of the torus and the affine
space. This situation is due to the fact that the general
linearization conjecture (i.e., the conjecture that states that any
effective regular torus action on any affine space is linearizable)
has a negative answer.

\section{Maximal torus action on the free algebra}

In this section, we provide the proof of the free algebra version
(Theorem \ref{BBfree}) of the Bia\l{}ynicki-Birula theorem
\cite{BB1}. The proof proceeds along the lines of the original proof
of Bia\l{}ynicki-Birula in the commutative case.

If $\sigma$ is the effective action of Theorem \ref{BBfree}, then for each $t\in \mathbb{T}_n$ the automorphism
$$
\sigma(t): F_n\rightarrow F_n
$$
is given by the $n$-tuple of images of the generators $z_1,\ldots,z_n$ of the free algebra:
$$
(f_1(t,z_1,\ldots,z_n),\ldots,f_n(t,z_1,\ldots,z_n)).
$$
%Each of  $f_1,\ldots, f_n$ is a polynomial in the free variables.

\begin{Def}\label{deftideal}
If $A$ is an associative $\mathbb{K}$-algebra, then a two-sided ideal $I$ of $A$ is called a $T$\textbf{-ideal}, if it is stable under
all $\mathbb{K}$-algebra endomorphisms of $A$.
\end{Def}

\begin{lem} \label{fixedorigin0} If $I$ is a $T$-ideal of $F_n$ then any automorphism $\varphi: F_n\rightarrow F_n$ induces an automorphism $\bar \varphi: F_n/I \rightarrow F_n/I$ by $\bar \varphi (f+I)=\bar \varphi (f)+I.$
\end{lem}
\begin{proof}
This is immediate since $\varphi (I) \subseteq I$ by hypothesis.
\end{proof}

\begin{lem} \label{fixedorigin}
There is a translation of the free generators
$$
(z_1,\ldots,z_n)\rightarrow (z_1-c_1,\ldots,z_n-c_n),\;\;(c_i\in\mathbb{K})
$$
such that (for all $t\in\mathbb{T}_n$) the polynomials
$f_i(t,z_1-c_1,\ldots,z_n-c_n)$ have constant term 0.
\end{lem}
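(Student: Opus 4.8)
The plan is to reduce the statement to the commutative fixed-point theorem (Theorem \ref{BBthm1}), which is available to us. Observe that producing the required translation is the same thing as producing a common fixed point of the action: a tuple $c=(c_1,\dots,c_n)\in\mathbb{K}^n$ is fixed precisely when $f_i(t,c_1,\dots,c_n)=c_i$ for every $t\in\mathbb{T}_n$ and every $i$, and translating the generators so as to carry such a $c$ to the origin is exactly what makes the free parts of the (conjugated) generator images vanish. So the entire content of the lemma is the existence of a point fixed by $\sigma(t)$ for all $t$ simultaneously.

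First I would pass to the abelianization. The commutator ideal $I\subset F_n$ (the two-sided ideal generated by all $fg-gf$) is characteristic: any algebra automorphism carries commutators to commutators and hence preserves $I$. Consequently each $\sigma(t)$ descends to an automorphism $\bar\sigma(t)$ of the quotient $F_n/I\cong\mathbb{K}[x_1,\dots,x_n]$, and since passage to the quotient is a group homomorphism $\Aut F_n\to\Aut(F_n/I)$, the map $t\mapsto\bar\sigma(t)$ is again a $\mathbb{T}_n$-action. The images $\bar f_i(t,x)$ are just the abelianizations of the $f_i(t,z)$, so their coefficients are the same regular (Laurent-polynomial) functions of $t$; thus $\bar\sigma$ is a regular torus action on $\mathbb{A}^n$ in the sense of Section 2.

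Next I would invoke Theorem \ref{BBthm1} to obtain a point $c\in\mathbb{A}^n$ fixed by $\bar\sigma$, i.e. $\bar f_i(t,c)=c_i$ for all $t,i$. The observation that lifts this back to the free algebra is that a $\mathbb{K}$-point evaluation $z_j\mapsto c_j$ is an algebra homomorphism $F_n\to\mathbb{K}$ into a commutative ring, hence factors through $F_n/I$. Therefore $f_i(t,c)=\bar f_i(t,c)=c_i$, so $c$ is a common fixed point of the original free-algebra action as well. Conjugating $\sigma$ by the translation $z_i\mapsto z_i-c_i$ (equivalently, performing this substitution in the generator images) then moves $c$ to the origin and produces new generator images whose free part vanishes, which is the assertion.

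The only points demanding care, rather than being automatic, are the two structural facts behind the reduction: that $\sigma$ genuinely descends to a \emph{regular} action on the commutative quotient (characteristicity of $I$ together with inheritance of regularity), and that scalar evaluations factor through the abelianization, so that a commutative fixed point is automatically a fixed point of the free action. Neither is a serious obstacle. I should note that one could instead imitate Bia\l{}ynicki-Birula's original argument directly on $F_n$, decomposing each $f_i(t,z)$ into its components along the characters of $\mathbb{T}_n$ and exploiting the cocycle relation from $\sigma(t_1t_2)=\sigma(t_1)\sigma(t_2)$ to solve the fixed-point equations; but the main difficulty there, namely controlling the weight decomposition finely enough to locate the fixed point, is precisely what the reduction to Theorem \ref{BBthm1} allows us to sidestep.
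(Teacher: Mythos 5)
Your proposal is correct and follows essentially the same route as the paper: abelianize the action via the (characteristic) commutator ideal, apply Theorem \ref{BBthm1} to the induced regular action on $\mathbb{K}[x_1,\ldots,x_n]$ to get a fixed point, and translate it to the origin. The paper's own proof is terser (it simply notes that vanishing of the degree-zero parts descends from the commutative quotient back to $F_n$), whereas you spell out the lifting step via the evaluation homomorphism $z_j\mapsto c_j$ factoring through the abelianization, but the argument is structurally identical.
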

\begin{proof}
This is a direct consequence of Theorem \ref{BBthm1}. Indeed, the
commutator ideal $I$ is a $T$-ideal so, by Lemma \ref{fixedorigin0}
any action $\sigma$ on the free algebra induces an action
$\bar{\sigma}$ on the commutative algebra
$\mathbb{K}[x_1,\ldots,x_n]$. If $\sigma$ is regular, then so is
$\bar{\sigma}$. By Theorem \ref{BBthm1}, $\bar{\sigma}$ (or rather,
its geometric counterpart) has a fixed point $c_1, \dots, c_n)$.
Therefore the images $\bar g_i$ of the commutative generators $\bar
x_i-c_i$ under $\bar{\sigma}(t)$ (for every $t$) will be polynomials
with constant term 0. Consequently, the  images of the generators $
x_i-c_i$ under $\sigma$ will in $g_i +I$, which also has constant
term 0.
\end{proof}

We may then suppose, without loss of generality, that the polynomials $f_i$ have the form
$$
f_i(t,z_1,\ldots,z_n)=\sum_{j=1}^{n}a_{ij}(t)z_j +
\sum_{j,l=1}^{n}a_{ijl}(t)z_jz_l + \sum_{k=3}^{N}\sum_{J:
\,|J|=k}a_{i,J}(t)z^J
$$
where by $z^J$ we denote, as in \eqref{eq1}, a specific monomial
$$
z_{i_1}^{k_1}z_{i_2}^{k_2}\ldots
$$
(a word in the alphabet $\lbrace z_1,\ldots, z_n\rbrace$ in the
reduced notation; $J$ is the multi-index in the sense described
above); also, $N$ is the degree of the automorphism (which is
finite) and the $a_{ij}, a_{ijl},a_{i,J}$ are polynomials in
$t_1,\ldots, t_n$.

As $\sigma_t$ is an automorphism, the matrix $[a_{ij}]$ that
determines the linear part is nonsingular. Therefore, without loss
of generality we may assume it to be diagonal (just as in the
commutative case \cite{BB1}) of the form
$$
\diag(t_1^{m_{11}}\ldots t_n^{m_{1n}},\ldots, t_1^{m_{n1}}\ldots t_n^{m_{nn}}).
$$

Now, just as in \cite{BB1}, we have the following
\begin{lem} \label{lem1}
The  matrix $[m_{ij}]$ of exponents is nonsingular.
\end{lem}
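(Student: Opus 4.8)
The plan is to reinterpret the power matrix $[m_{ij}]$ as the matrix of the torus action induced on the cotangent space at the fixed point, and to show that singularity of $[m_{ij}]$ would produce a nontrivial one-parameter subgroup acting trivially, contradicting effectiveness. First I would record that passing to the linear part is functorial: since each $\sigma(t)$ fixes the augmentation ideal $\mathfrak{m}=(z_1,\dots,z_n)$ (Lemma \ref{fixedorigin}), it induces a linear map on $\mathfrak{m}/\mathfrak{m}^2\cong \mathbb{K}z_1\oplus\cdots\oplus\mathbb{K}z_n$, and $t\mapsto[a_{ij}(t)]$ is a genuine group homomorphism $\rho\colon\mathbb{T}_n\to\GL_n$, because the linear part of a composition of augmentation-preserving endomorphisms is the product of the linear parts. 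In the diagonalizing coordinates $\rho(t)=\diag(\chi_1(t),\dots,\chi_n(t))$ with characters $\chi_i(t)=t_1^{m_{i1}}\cdots t_n^{m_{in}}$, so the power matrix $[m_{ij}]$ is exactly the matrix of $\rho$ on the character (equivalently cocharacter) lattice $\mathbb{Z}^n$.

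Next I would argue by contradiction. If $\det[m_{ij}]=0$, then the linear map $[m_{ij}]\colon\mathbb{Q}^n\to\mathbb{Q}^n$ has nontrivial kernel, so after clearing denominators there is a nonzero integer vector $v=(v_1,\dots,v_n)$ with $[m_{ij}]v=0$. This yields a nonconstant one-parameter subgroup $\lambda(s)=(s^{v_1},\dots,s^{v_n})$ for which $\chi_i(\lambda(s))=s^{(\,[m_{ij}]v\,)_i}=1$ for every $i$; hence $\rho(\lambda(s))=\Id$ for all $s\in\mathbb{K}^{\times}$. In other words the automorphisms $\sigma(\lambda(s))$ all act trivially modulo $\mathfrak{m}^2$, i.e. $\sigma(\lambda(s))(z_i)=z_i+(\text{terms of degree}\ge2)$.

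The crux — and the step I expect to be the main obstacle — is to upgrade ``trivial on the linear part'' to ``trivial'' and thus contradict effectiveness. I would do this by a minimal-degree analysis: suppose some $\sigma(\lambda(s))$ is nontrivial and let $d\ge2$ be the least degree in which $\sigma(\lambda(s))(z_i)-z_i$ has a nonzero homogeneous component $D_i(s)$ for some $i$ and some $s$. Expanding the identity $\sigma(\lambda(s_1s_2))=\sigma(\lambda(s_1))\circ\sigma(\lambda(s_2))$ and extracting the degree-$d$ part — where all lower corrections vanish by minimality, so the substitution $z_j\mapsto z_j+(\deg\ge d)$ leaves a homogeneous degree-$d$ element unchanged in degree $d$ — gives the additive relation $D_i(s_1s_2)=D_i(s_1)+D_i(s_2)$. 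Each coefficient of $D_i$ in the monomial basis is then a regular homomorphism $\mathbb{K}^{\times}\to(\mathbb{K},+)$, and there are no nonconstant such homomorphisms; hence $D_i\equiv0$, contradicting minimality of $d$. Therefore $\sigma(\lambda(s))=\Id$ for all $s$, so $\sigma$ fails to be injective on the nontrivial subgroup $\lambda(\mathbb{K}^{\times})$, contradicting effectiveness. Consequently $\det[m_{ij}]\neq0$. I would note that the argument is characteristic-free, since the vanishing of $\operatorname{Hom}(\mathbb{K}^{\times},(\mathbb{K},+))$ holds in every characteristic.
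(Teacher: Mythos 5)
Your proof is correct and takes essentially the same route as the paper's: both derive from singularity of $[m_{ij}]$ a one-dimensional subtorus on which the linear part of $\sigma$ is trivial, then use the group law of the action at the lowest nontrivial degree to force all higher coefficients to vanish, contradicting effectiveness. The only cosmetic difference is that the paper extracts the doubling relation $2a_{ijl}(t)=a_{ijl}(t^2)$ (the case $s_1=s_2$ of your additivity relation $D_i(s_1s_2)=D_i(s_1)+D_i(s_2)$) and proceeds by induction on degree, whereas you invoke the nonexistence of nonzero regular additive characters $\mathbb{K}^{\times}\to(\mathbb{K},+)$.
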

\begin{proof}
Consider a linear action $\tau$ defined by
$$
\tau(t):(z_1,\ldots, z_n)\mapsto (t_1^{m_{11}}\ldots t_n^{m_{1n}}z_1,\ldots, t_1^{m_{n1}}\ldots t_n^{m_{nn}}z_n),\;\; (t_1,\ldots,t_n)\in\mathbb{T}_n.
$$
If $\mathbb T_1\subset \mathbb T_n$ is any one-dimensional torus, the restriction of
$\tau$ to $\mathbb{T}_1$ is nontrivial. Indeed, were it to happen
that for some $\mathbb{T}_1$,
$$
\tau(t)z=z,\;\;t\in \mathbb{T}_1,\;\;(z=(z_1,\ldots,z_n)),
$$
then our initial action $\sigma$, whose linear part is represented
by $\tau$, would be the identity modulo terms of degree $>1$:
$$
\sigma(t)(z_i) = z_i + \sum_{j,l}a_{ijl}(t)z_jz_l+\cdots.
$$
Now, the equality $\sigma(t^2)(z)=\sigma(t)(\sigma(t)(z))$ implies
\begin{align*}
\sigma(t)(\sigma(t)(z_i))&=\sigma(t)\left(z_i+\sum_{jl}a_{ijl}(t)z_jz_l+\cdots\right)
\\&= z_i+ \sum_{jl}a_{ijl}(t)z_jz_l+\sum_{jl}a_{ijl}(t)(z_j
z_l+\sum_{km}a_{jkm}(t)z_kz_m+\cdots)\\&\hspace*{0.51cm}(z_i+\sum_{k'm'}a_{lk'm'}(t)z_{k'}z_{m'}+\cdots)+\cdots\\&=
z_i+\sum_{jl}a_{ijl}(t^2)z_jz_l+\cdots
\end{align*}
which means that
$$
2a_{ijl}(t)=a_{ijl}(t^2)
$$
and therefore $a_{ijl}(t)=0$. The coefficients of the higher-degree
terms are treated by induction (on the total degree of the
monomial). Thus
$$
\sigma(t)(z) = z,\;\;t\in\mathbb{T}_1,
$$
which is a contradiction since $\sigma$ is effective. Finally, if $[m_{ij}]$ were singular, then one would easily find
 a one-dimensional torus such that the restriction of $\tau$ would be trivial.
\end{proof}

Consider the action
$$
\varphi(t) = \tau(t^{-1})\circ\sigma(t).
$$
The images under $\varphi(t)$ are
$$
(g_1(z,t),\ldots, g_n(z,t)),\;\;(t = (t_1,\ldots,t_n))
$$
with
$$
g_i(z,t) = \sum g_{i,m_1\ldots m_n}(z)t_1^{m_1}\ldots t_n^{m_n},\;\;m_1,\ldots, m_n\in\mathbb{Z}.
$$
Define $G_i(z) = g_{i,0\ldots 0}(z)$ and consider the map
$\beta:F_n\rightarrow F_n$, given by
$$
\beta:(z_1,\ldots,z_n)\mapsto (G_1(z),\ldots, G_n(z)).
$$
\begin{lem} \label{lem2}
$\beta\in \Aut F_n$ and
$$
\beta = \tau(t^{-1})\circ\beta\circ\sigma(t).
$$
\end{lem}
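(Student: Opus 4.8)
The relation $\beta=\tau(t^{-1})\circ\beta\circ\sigma(t)$ is equivalent to $\beta\circ\sigma(t)=\tau(t)\circ\beta$, i.e. to the assertion that $\beta$ conjugates the given action $\sigma$ into its linear part $\tau$; together with $\beta\in\Aut F_n$ this is exactly the linearization we are after, so the lemma is the heart of the theorem. The plan is to realize $\beta$ as a one-parameter limit of the maps $\varphi(t)=\tau(t^{-1})\circ\sigma(t)$ and to read off both claims from that description. First I would record the cocycle identity that follows from the group law: since $\sigma$ and $\tau$ are homomorphisms into $\Aut F_n$ and $\mathbb{T}_n$ is abelian,
\[
\varphi(st)=\tau(t^{-1})\circ\varphi(s)\circ\sigma(t),\qquad s,t\in\mathbb{T}_n .
\]
The point is that only the factor $\varphi(s)$ carries the variable $s$, which is what makes the identity usable.

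Next I would choose a suitable one-parameter subgroup. By Lemma~\ref{lem1} the power matrix $[m_{ij}]$ is nonsingular, so there is a cocharacter $\lambda$, $s\mapsto(s^{a_1},\ldots,s^{a_n})$, for which the $\tau$-weights of all generators coincide, say $\langle a,m_i\rangle=c>0$ for every $i$. For such $a$ the operator $\tau(\lambda(s))$ multiplies a homogeneous element of degree $d$ by $s^{cd}$, so it is the exponentiated grading. The key computation is then a one-sidedness statement: writing $\varphi(t)(z_i)=\sum_m g_{i,m}(z)t^m$, every nonzero multi-index $m$ occurring here satisfies $\langle a,m\rangle<0$. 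I would prove this by a leading-term argument. Decomposing $z_i=\sum_\chi u_{i,\chi}$ into $\sigma$-eigenvectors (possible because each generator lies in a finite-dimensional, hence rational, invariant subspace), the lowest-degree part of $u_{i,\chi}$ is forced to be $\tau$-homogeneous of weight $\chi$, since $\sigma$ agrees with $\tau$ to lowest order. Thus a monomial $z^K$ inside $u_{i,\chi}$ contributes the exponent $m=\chi-w(K)$ with $\langle a,m\rangle=c(\ell_\chi-\deg z^K)\le 0$, where $\ell_\chi$ is the lowest degree of $u_{i,\chi}$, and equality forces $m=0$. This is the step I expect to be the main obstacle: it is precisely where the rigidity of a full-rank torus action (Lemma~\ref{lem1}) must be converted into genuine control of the higher-order terms; everything after it is formal.

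Granting one-sidedness, $\lim_{s\to\infty}\varphi(\lambda(s))(z_i)=g_{i,0}(z)=G_i(z)$, so $\beta=\lim_{s\to\infty}\varphi(\lambda(s))$. As the specialization at $s^{-1}=0$ of a family of automorphisms depending polynomially on $s^{-1}$, the map $\beta$ is an algebra endomorphism: it inherits the same multiplicativity identities as the $\varphi(\lambda(s))$. Now I substitute the cocharacter into the cocycle identity and let $s\to\infty$. On the left, $\varphi(\lambda(s)t)(z_i)=\sum_m g_{i,m}(z)\,s^{\langle a,m\rangle}t^m$, and only the $m=0$ term survives, giving $\beta(z_i)$; on the right, $\tau(t^{-1})\circ\varphi(\lambda(s))\circ\sigma(t)$ tends to $\tau(t^{-1})\circ\beta\circ\sigma(t)$. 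Comparing limits yields the intertwining relation on generators, hence as homomorphisms.

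Finally I would establish $\beta\in\Aut F_n$. Injectivity is immediate, since $\beta(z_i)=z_i+(\deg\ge 2)$ shows that $\beta$ induces the identity on the associated graded for the degree filtration, so it preserves lowest-degree parts and has trivial kernel. For surjectivity I would exploit the intertwining relation, which gives $\beta(V_\chi)\subseteq F_n^{(\chi)}$, where $V_\chi$ and $F_n^{(\chi)}$ denote the $\sigma$- and $\tau$-weight spaces. The choice of $a$ makes each $F_n^{(\chi)}$ finite-dimensional, as all words of $\tau$-weight $\chi$ share the degree $\langle a,\chi\rangle/c$, and the lowest-degree-part map identifies $V_\chi$ with $F_n^{(\chi)}$, so these spaces have equal finite dimension. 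Hence $\beta$ restricts to an injection between spaces of equal dimension and is onto each $F_n^{(\chi)}$; summing over $\chi$ shows $\beta$ is bijective. A bijective endomorphism of the free algebra is an automorphism, completing the proof.
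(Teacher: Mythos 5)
Your proposal is correct in substance, but it takes a genuinely different route from the paper's on both halves of the lemma. For the intertwining relation, the paper simply records the cocycle identity $\varphi(st)=\tau(t^{-1})\circ\varphi(s)\circ\sigma(t)$ and reads off $\beta=\tau(t^{-1})\circ\beta\circ\sigma(t)$ by extracting the $s$-constant coefficient; you instead build a cocharacter $\lambda$ from the nonsingularity of $[m_{ij}]$ (Lemma \ref{lem1}), prove that all non-constant $s$-weights of $\varphi(\lambda(s))$ are strictly negative, and pass to the specialization $s^{-1}=0$. Your extra work is not wasted: naive coefficient extraction does not obviously commute with precomposition by $\sigma(t)$, since in a product terms with exponents $m$ and $-m$ can conspire to create new $s$-constant terms; your one-sidedness lemma is exactly what excludes such cancellations, so on this step you are more careful than the paper. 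For invertibility the two arguments diverge completely: the paper passes to the power-series completion $\hat{F}_n$, inverts $\hat{\beta}$ formally (its linear part is the identity), and uses nonsingularity of $[m_{ij}]$ together with the uniform degree bound on $\sigma(t)$ to force all coefficients $b_{i,J}$ with $|J|>N$ of the formal inverse to vanish, so that $\beta^{-1}$ is polynomial; you prove injectivity from the unipotent shape $\beta(z_i)=z_i+(\deg\geq 2)$ and surjectivity by a weight-space dimension count via $\beta(V_\chi)\subseteq F_n^{(\chi)}$. The paper's route yields the inverse explicitly and needs no weight decomposition of the $\sigma$-action; yours avoids completions entirely and stays within polynomial weight theory, at the price of invoking rationality (local finiteness and diagonalizability) of $\sigma$, which you correctly justify by the finite-dimensional invariant span of each generator.

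One soft spot: you assert that the lowest-degree-part map identifies $V_\chi$ with $F_n^{(\chi)}$. Injectivity is indeed immediate (every nonzero element of $V_\chi$ has lowest degree exactly $\langle a,\chi\rangle/c$, so the map is the linear projection to that degree), but the surjectivity, i.e. $\dim V_\chi\geq\dim F_n^{(\chi)}$, needs an argument: given $u\in F_n^{(\chi)}$, decompose $u=\sum_\mu u_\mu$ into $\sigma$-eigenvectors and compare leading terms -- leading parts of distinct $\sigma$-weights lie in distinct $\tau$-weight spaces, so they cannot cancel, forcing $u_\mu=0$ for $\mu\neq\chi$ and $u$ to be the leading part of $u_\chi$. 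This is true and fits the same leading-term technique you already use, so the gap is fillable; alternatively you can bypass the dimension count altogether: the intertwining relation makes $\beta(F_n)$ a $\tau$-stable subspace, hence a sum of its $\tau$-weight components, and the weight-$m_i$ component of $\beta(z_i)$ is exactly $z_i$ (no monomial of degree $\geq 2$ can have weight $m_i$ by your choice of $a$), so $z_i\in\beta(F_n)$ for all $i$ and $\beta$ is onto.
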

\begin{proof}
This lemma mirrors the final part in the proof in \cite{BB1}. The conjugation is straightforward, since for every $s,t\in\mathbb{T}_n$ one has
$$
\varphi(st) = \tau(t^{-1}s^{-1})\circ\sigma(st) =
\tau(t^{-1})\circ\tau(s^{-1})\circ\sigma(s)\circ\sigma(t)=\tau(t^{-1})\circ\varphi(s)\circ\sigma(t),
$$
and $\beta $ is the constant part of $\varphi$ in terms of $t$, so comes out the same for $st$ as for $t$.

Denote by $\widehat{F}_n$ the power series completion of the free algebra $F_n$, and let $\hat{\sigma}$, $\hat{\tau}$ and $\hat{\beta}$ denote the endomorphisms of the power series algebra induced by the corresponding morphisms of $F_n$. The endomorphisms
$\hat{\sigma}$, $\hat{\tau}$, $\hat{\beta}$ come from (polynomial) automorphisms and therefore are invertible.

Let
$$
B_i(z) := \hat{\beta}^{-1}(z_i) =  \sum_{J}b_{i,J}z^J
$$
($z^J$ again denoting the monomial with multi-index $J$). Then
$$
\hat{\beta}\circ\hat{\tau}(t)\circ\hat{\beta}^{-1}(z_i) = B_i(t_1^{m_{11}}\ldots t_n^{m_{1n}}G_1(z),\ldots,t_1^{m_{n1}}\ldots t_n^{m_{nn}}G_n(z)).
$$
Now, from the conjugation property we must have
$$
\hat{\beta}=\hat{\sigma}(t^{-1})\circ\hat{\beta}\circ\hat{\tau}(t),
$$
and therefore $\hat{\sigma}(t) =
\hat{\beta}\circ\hat{\tau}(t)\circ\hat{\beta}^{-1}$ and
$$
\hat{\sigma}(t)(z_i) = \sum_{J}b_{i,J}(t_1^{m_{11}}\ldots t_n^{m_{1n}})^{j_1}\ldots (t_1^{m_{n1}}\ldots t_n^{m_{nn}})^{j_n}G(z)^J;
$$
here the notation $G(z)^J$ stands for a word in $G_i(z)$ with multi-index $J$, while the exponents $j_1,\ldots, j_n$ count how many times a given index appears in $J$ (or, equivalently, how many times a given generator $z_i$ appears in the word $z^J$).

Therefore, the coefficient of $\hat{\sigma}(t)(z_i)$ at $z^J$ has the form
$$
b_{i,J}(t_1^{m_{11}}\ldots t_n^{m_{1n}})^{j_1}\ldots (t_1^{m_{n1}}\ldots t_n^{m_{nn}})^{j_n}+S
$$
with $S$ a finite sum of monomials of the form
$$
c_L (t_1^{m_{11}}\ldots t_n^{m_{1n}})^{l_1}\ldots (t_1^{m_{n1}}\ldots t_n^{m_{nn}})^{l_n}
$$
with $(j_1,\ldots,j_n)\neq (l_1,\ldots,l_n)$. Since the  matrix
$[m_{ij}]$  of exponents  is nonsingular, if $b_{i,J}\neq 0$, we can
find a $t\in\mathbb{T}_n$ such that the coefficient is not 0. Since
$\sigma$ is a regular action, the degree
$$
\sup_{t}\Deg (\hat{\sigma})
$$
is a finite integer $N$ (the degree is bounded globally with respect to $t$). Together with the previous statement, this
implies that
$$
b_{i,J} = 0,\;\;\text{whenever}\;\;|J|>N.
$$
Therefore, $B_i(z)$ are polynomials in the free variables. What remains is to notice that
$$
z_i = B_i(G_1(z),\ldots,G_n(z)).
$$
Thus $\beta$ is an automorphism.
\end{proof}
From Lemma \ref{lem2} it follows that
$$
\tau(t) = \beta^{-1}\circ \sigma(t)\circ\beta
$$
which is a linearization of $\sigma$. Theorem \ref{BBfree} is
proved.

%\section{Action of $\mathbb{K}^{\times}$ on $\mathbb{K}\langle z_1,z_2\rangle$}

\section{\texorpdfstring{Action of $\mathbb K^{\times}$ on $\mathbb K\langle z_1,z_2\rangle$}{Action of K* on K<z1,z2>}} \label{S:4.:::}

The proof of linearity in the case of a maximal torus action on the
free algebra is obtained from the original proof of
Bia\l{}ynicki-Birula in a straightforward manner. Having done that,
one could try to prove the free version of the other result of
Bia\l{}ynicki-Birula, \cite{BB2}, on the linearity of the action of
$\mathbb{T}_{n-1}$. However, taking that path, one quickly runs into
trouble. It is not implausible, however, that the free associative version of the main
result of \cite{BB2} holds. We have  the following conjecture.

\begin{conj} \label{bbfreeconj}
Any effective regular action of $\mathbb{T}_{n-1}$ on the free
algebra $F_n(\mathbb{K})$ is linearizable, for $\mathbb{K}$
algebraically closed.
\end{conj}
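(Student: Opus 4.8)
The plan is to push the blueprint of the maximal-torus proof (Theorem \ref{BBfree}) as far as it will carry us, and then to confront the genuinely new phenomenon — \emph{resonance} — that distinguishes the codimension-one case. First I would reduce to an action fixing the origin: exactly as in Lemma \ref{fixedorigin}, the induced action $\bar\sigma$ on $\mathbb{K}[x_1,\ldots,x_n]$ is a regular torus action on $\mathbb{A}^n$, which has a fixed point, and translating the free generators kills the degree-zero parts of the images $f_i$. Next, the linear parts of the $\sigma(t)$ assemble into an $(n-1)$-dimensional torus representation on $\mathrm{span}(z_1,\ldots,z_n)$; being a representation of a diagonalizable group it is simultaneously diagonalizable after a linear change of generators, so we may assume
$$
\sigma(t)(z_i) = t^{\chi_i} z_i + (\text{higher-degree terms}), \qquad \chi_i \in \mathbb{Z}^{n-1},
$$
where $t^{\chi_i} = t_1^{m_{i1}}\cdots t_{n-1}^{m_{i,n-1}}$. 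The effectiveness argument of Lemma \ref{lem1} — comparing $\sigma(t^2)$ with $\sigma(t)\circ\sigma(t)$ degree by degree to rule out a trivially-acting subtorus — adapts verbatim and shows that the weights $\chi_1,\ldots,\chi_n$ span $\mathbb{Q}^{n-1}$, i.e. the $n\times(n-1)$ weight matrix $[m_{ij}]$ has maximal rank $n-1$.

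I would then form $\varphi(t)=\tau(t^{-1})\circ\sigma(t)$, with $\tau$ the diagonal linear action of the $\chi_i$, extract the weight-zero components $G_i(z)$, and set $\beta:z_i\mapsto G_i(z)$, aiming for $\tau=\beta^{-1}\circ\sigma\circ\beta$ as in Lemma \ref{lem2}. This is exactly where the codimension-one case departs from the maximal-torus case. In Lemma \ref{lem2} the non-singularity of the \emph{square} weight matrix guaranteed that every high-degree monomial $z^J$ could be separated by torus weight from the generator $z_i$, forcing $\beta^{-1}(z_i)$ to be a polynomial. Now the weight map $\mathbb{Z}^n\to\mathbb{Z}^{n-1}$, $(j_1,\ldots,j_n)\mapsto\sum_k j_k\chi_k$, has a rank-one kernel $\mathbb{Z}v$ with $v=(v_1,\ldots,v_n)$, so there exist \emph{resonant} monomials — those whose exponent vector is $e_i+cv\ge 0$ — carrying the same torus weight as $z_i$. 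For these terms the separation argument collapses: they are genuine equivariant nonlinearities invisible to the naive $\beta$.

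The heart of the proof, and the step I expect to be the main obstacle, is the elimination of these resonant terms. This is precisely the point at which Bia\l{}ynicki-Birula's commutative proof in \cite{BB2} uses commutativity essentially: there the geometric quotient $\mathbb{A}^n/\!/\mathbb{T}_{n-1}$ is one-dimensional and a slice analysis sweeps the resonances away, a device with no ready free analogue. I would instead attempt a purely algebraic, inductive normalization up the degree filtration, using the effectiveness of $\sigma$ together with the invertibility of each $\sigma(t)$ to pin down the resonant coefficients, and seeking a further (generally nonlinear) equivariant automorphism that cancels them degree by degree. The sign structure of the kernel vector $v$ governs the difficulty: if $v$ has entries of both signs the resonant exponents $e_i+cv\ge 0$ are finite in number, the normalization should terminate, and $\beta$ (corrected by these cancellations) should be the sought automorphism; a single-signed $v$, by contrast, encodes a positive relation among the weights (in the spirit of the positive-root actions discussed in the introduction), permits resonances in every degree, and demands a far more delicate treatment.

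A natural safeguard, worth pursuing in parallel, is to try to extend the effective $\mathbb{T}_{n-1}$-action to a maximal $\mathbb{T}_n$-action and invoke Theorem \ref{BBfree} directly, since the restriction of a linearized maximal-torus action is again linear; the existence of such an extension is, however, a substantial question in its own right and is itself governed by the same resonance data. In all cases the conjecture already holds for $n=2$ — the $\mathbb{K}^{\times}$-action on $F_2$ that is the subject of the present section — which provides both a base case and a concrete template for the resonance bookkeeping that any general induction must carry out.
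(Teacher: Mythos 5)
Your proposal cannot be measured against a proof in the paper, because the paper offers none: the statement you were given is stated there as Conjecture \ref{bbfreeconj}, and the authors explicitly note the ``lack of an attack strategy'' for it. The only case the paper settles is $n=2$ (Theorem \ref{BBfree2}), and by a route entirely different from yours: the Makar-Limanov isomorphism $\Phi:\Aut\mathbb{K}\langle z_1,z_2\rangle\rightarrow\Aut\mathbb{K}[x_1,x_2]$ transfers the action to the commutative side, where Bia\l{}ynicki-Birula's theorem \cite{BB2} supplies a linearizing conjugator $\beta$; the lift $\Theta(\beta)$ then linearizes the free action, because any surviving higher-degree terms would have to lie in the commutator ideal, which Lemma \ref{MLcor} forbids. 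That argument sidesteps resonance analysis entirely, but it is also why it stops dead at $n=2$: for $n\geq 3$ there is no lifting map $\Theta$ (the paper itself points to the negative answer to the lifting problem in \cite{KBYu}), so nothing in your proposal could have been checked against a hidden general argument --- there isn't one.

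The genuine gap in your proposal is the step you yourself flag and then leave open: the elimination of resonant terms is never carried out. Your adaptation of Lemma \ref{fixedorigin}, Lemma \ref{lem1} and the first half of Lemma \ref{lem2} is sound, and your diagnosis --- that the rank-one kernel $\mathbb{Z}v$ of the weight map $\mathbb{Z}^n\to\mathbb{Z}^{n-1}$ produces monomials of weight $\chi_i$ invisible to $\beta$ --- is exactly the obstruction the paper alludes to when it says the proof of \cite{BB2} ``uses commutativity in an essential way.'' But ``I would attempt a purely algebraic, inductive normalization'' and ``the normalization should terminate'' are declarations of intent, not arguments: you construct no correcting equivariant automorphism, you do not show the degree-by-degree cancellation is consistent (each correction can reintroduce resonant terms in higher degrees), and you give no treatment of the single-signed case, where resonances occur in every degree and the induction has no terminus. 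The free setting is moreover strictly worse than your bookkeeping suggests: the torus weight of a word depends only on its multiset of letters, so each resonant exponent vector $e_i+cv$ corresponds not to one monomial but to all words with that letter content, all of whose coefficients the normalization must control simultaneously. Finally, your fallback --- extending the $\mathbb{T}_{n-1}$-action to a $\mathbb{T}_n$-action and invoking Theorem \ref{BBfree} --- begs the question, since producing such an extension is essentially equivalent to already knowing the action is linearizable. What you have is a plausible research program, with an accurate map of where it will get stuck; it is not a proof, and the statement remains, as in the paper, a conjecture except for $n=2$.
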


Despite the lack of a strategy to attack Conjecture \ref{bbfreeconj}
in its entirety, a minor comment can be made right away. Namely, we can
state the result of Conjecture \ref{bbfreeconj} for the exceptional case $n=2$ as
a consequence of the \emph{tameness} of polynomial algebra and free algebra automorphisms in two variables -- a result due to Jung -- Van der Kulk \cite{Jung, VdK} and Makar-Limanov \cite{ML2} (see
also \cite{Czer}). The situation is as follows.

An automorphism $\varphi\in\Aut\mathbb{K}[x_1,\ldots,x_n]$ is referred to as \emph{elementary} if it is of the form
$$
\varphi = (x_1,\ldots,\;x_{k-1},\;ax_k+f(x_1,\ldots,x_{k-1},\;x_{k+1},\;\ldots,\;x_n),\;x_{k+1},\;\ldots,\;x_n)
$$
with $a\in\mathbb{K}^{\times}$. Note that linear invertible changes of variables -- that is, transformations of the form
$$
(x_1,\;\ldots,\;x_n)\mapsto (x_1,\;\ldots,\;x_n)A,\;\;A\in\GL(n,\mathbb{K})
$$
are realized as compositions of elementary automorphisms.

The subgroup $\TAut\mathbb{K}[x_1,\ldots,x_n]$ of $\Aut\mathbb{K}[x_1,\ldots,x_n]$ generated by all elementary automorphisms is the group of so-called \emph{tame automorphisms}. Automorphisms that are not tame are called \emph{wild}. For $n>2$ there exist wild automorphisms -- the most famous example given by the Nagata automorphism ($n=3$, $\Char \mathbb{K}=0$):
$$
(x_1+(x_1^2-x_2x_3)x_1,\;x_2+2(x_1^2-x_2x_3)x_1+(x_1^2-x_2x_3)^2x_3,\;x_3)
$$
as demonstrated by Shestakov and Umirbaev \cite{Shes2}. Furthermore, for the free algebra $F_3$ case ($\Char \mathbb{K}=0$), Umirbaev \cite{Umir} proved that the Anick automorphism
$$
(x_1 + x_3(x_1x_3 - x_3x_2), x_2 + (x_1x_3 - x_3x_2)x_3, x_3)
$$
is wild.

\smallskip

In the planar case $n=2$, on the other hand, all automorphisms of both the commutative polynomial algebra and the free associative algebra (over the ground field of any characteristic) are known to be tame. This statement is essentially an accumulation of results of Jung (\cite{Jung}, polynomial algebra, characteristic zero), Van der Kulk (\cite{VdK}, polynomial algebra, positive characteristic) and Makar-Limanov (free algebra and the general case). In particular, the following theorem is true.

\begin{thm}[Makar-Limanov, \cite{ML2}]\label{MLthm}
Let $\mathbb{K}$ be a field. Then the homomorphism
$$
\Phi: \Aut \mathbb{K}\langle z_1,z_2\rangle \rightarrow \Aut \mathbb{K}[x_1,x_2]
$$
induced by abelianization (i.e. for an automorphism $\varphi\in \Aut
\mathbb{K}\langle z_1,z_2\rangle$, the polynomials
$\Phi(\varphi)(x_i)$ that define the image under $\Phi$ are images
of $\varphi(z_i)$ under the projection map $\mathbb{K}\langle
z_1,z_2\rangle\rightarrow \mathbb{K}\langle z_1,z_2\rangle / I$, for
$I$ the commutator ideal) is an isomorphism.
\end{thm}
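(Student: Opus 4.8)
The plan is to prove that $\Phi$ is a bijective homomorphism, reducing the entire question to the combinatorial structure of the two automorphism groups. The guiding principle is that both $\Aut \mathbb{K}[x_1,x_2]$ and $\Aut \mathbb{K}\langle z_1,z_2\rangle$ are generated by the same two kinds of automorphisms — affine and elementary (triangular) — and that $\Phi$ matches these generators on the two sides.

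First I would check that $\Phi$ is well defined and a homomorphism, which is essentially functoriality of abelianization. The commutator ideal $I$ is characteristic: any endomorphism of $\mathbb{K}\langle z_1,z_2\rangle$ sends commutators to commutators, hence maps $I$ into $I$ and descends to an endomorphism of the quotient $\mathbb{K}[x_1,x_2]=\mathbb{K}\langle z_1,z_2\rangle/I$. Applying this to both $\varphi$ and $\varphi^{-1}$ shows that $\Phi(\varphi)$ is invertible with inverse $\Phi(\varphi^{-1})$, so $\Phi$ indeed lands in $\Aut \mathbb{K}[x_1,x_2]$ and respects composition. For surjectivity I would invoke the Jung--van der Kulk theorem: every automorphism of $\mathbb{K}[x_1,x_2]$ is tame, i.e. a finite product of affine automorphisms and elementary ones of the form $x_1\mapsto x_1,\ x_2\mapsto \lambda x_2 + p(x_1)$. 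Each such generator lifts canonically to the free algebra — affine maps act identically on the linear span of the generators on both sides, and the triangular generator lifts to $z_1\mapsto z_1,\ z_2\mapsto \lambda z_2 + p(z_1)$, which is manifestly a free-algebra automorphism. Since $\Phi$ carries each free generator to the corresponding commutative one, the image of $\Phi$ contains all tame automorphisms, so $\Phi$ is onto.

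The crux is injectivity, and here the essential input is that $\Aut \mathbb{K}\langle z_1,z_2\rangle$ is itself tame, with exactly the same amalgamated-free-product presentation $A *_C B$ as the commutative group (with $A$ the affine subgroup, $B$ the triangular subgroup, and $C=A\cap B$). Granting this, the restriction of $\Phi$ to $A$ and to $B$ is an isomorphism onto the corresponding commutative factor — the affine data (a matrix in $\mathrm{GL}_2$ and a translation vector, respectively a scalar and a polynomial $p$) is recovered verbatim from the abelianization — and these restrictions agree on $C$. The universal property of the amalgamated free product then forces $\Phi$ to be an isomorphism, in particular injective. Thus the whole theorem reduces to the free tameness/structure theorem. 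To establish the latter I would run a degree-reduction argument paralleling van der Kulk's: assign to an automorphism $(f_1,f_2)$ its pair of degrees, and show that whenever $\max(\deg f_1,\deg f_2)>1$ the top-degree homogeneous components of $f_1$ and $f_2$ must be dependent — one a scalar multiple of a power of the other — so that composition with a suitable elementary automorphism strictly lowers the total degree; iterating reduces any automorphism to an affine one.

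The hard part will be precisely this leading-form dependency in the noncommutative setting. In the commutative case one controls the leading forms using factoriality of $\mathbb{K}[x_1,x_2]$ together with the Jacobian, and neither tool transfers directly to the free algebra. Makar-Limanov's resolution works with the leading homogeneous parts directly inside $\mathbb{K}\langle z_1,z_2\rangle$, extracting the required dependency from the invertibility of the map via a rank argument on those top parts. I would follow this route, which is where all the genuine difficulty lies; by contrast, well-definedness, surjectivity, and the final amalgam bookkeeping are formal.
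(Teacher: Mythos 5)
The first thing to note is that the paper does not prove this statement at all: it is imported with attribution from Makar-Limanov \cite{ML2} (see also Czerniakiewicz \cite{Czer}), so there is no internal proof to compare against. Your proposal reconstructs what is in fact the standard literature proof, and its formal skeleton is correct: abelianization is functorial because the commutator ideal is characteristic; surjectivity follows from Jung--van der Kulk plus the evident lifting of affine and triangular generators; and the amalgam bookkeeping for injectivity is sound. Indeed you need slightly less than you invoke: it suffices that $\Aut\mathbb{K}\langle z_1,z_2\rangle$ be \emph{generated} by its affine and triangular subgroups, while the amalgamated-product structure is only needed on the commutative side (van der Kulk), since the inverse of $\Phi$ is manufactured by the universal property of the commutative amalgam and both composites are the identity because they are so on generating sets.

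The genuine gap is that the one step carrying all the mathematical content --- tameness of $\Aut\mathbb{K}\langle z_1,z_2\rangle$, equivalently the leading-form dependency that powers the degree-reduction --- is named but not proved. Everything you actually argue (well-definedness, surjectivity, the amalgam reduction) is formal; you say so yourself. The noncommutative degree-reduction is precisely the theorem of Makar-Limanov and Czerniakiewicz, and your sketch of it (``a rank argument on the top parts'') is a pointer, not an argument: as you note, the commutative tools (factoriality of $\mathbb{K}[x_1,x_2]$, Jacobian control of leading forms) do not transfer, and bridging that is exactly where the cited papers do their work. So, as a self-contained proof, the proposal is incomplete, with the entire difficulty deferred to essentially the result being proved; it is, however, a correct reduction that locates the difficulty exactly where it lies, and if the deferred step is allowed as a citation --- which is precisely how the paper itself treats the whole theorem --- the argument goes through.
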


\begin{remark}\label{remlift}
The inverse $\Theta$ of the isomorphism $\Phi$ is called the lifting
map.
\end{remark}

As a corollary of Makar-Limanov's theorem, we immediately obtain the following result.

\begin{thm}\label{BBfree2}
Let $\mathbb{K}$ be algebraically closed. Any effective regular action of the one-dimensional torus $\mathbb{G}_m$ on the free algebra $\mathbb{K}\langle z_1,z_2\rangle$ is linearizable.
\end{thm}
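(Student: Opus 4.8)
The plan is to transport the entire problem to the commutative polynomial algebra by means of the Makar-Limanov isomorphism $\Phi$ of Theorem \ref{MLthm}, solve it there with the classical $(n-1)$-torus theorem of Bia\l{}ynicki-Birula \cite{BB2} (for $n=2$ this is also Gutwirth's theorem \cite{RefB}), and then pull the solution back through the lifting map $\Theta=\Phi^{-1}$. The crucial leverage is that $\Phi$ is an isomorphism of \emph{groups}, so that conjugation, effectiveness, and linearity all pass through it transparently.

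First I would form the abelianized action $\bar\sigma=\Phi\circ\sigma\colon\mathbb{K}^\times\to\Aut\mathbb{K}[x_1,x_2]$, where $\bar\sigma(t)=\Phi(\sigma(t))$. Since $\Phi$ is injective, $\Ker\bar\sigma=\Ker\sigma=\{1\}$, so $\bar\sigma$ is effective; regularity is inherited because abelianization merely reduces the (Laurent-polynomial-in-$t$) coefficients of $\sigma(t)(z_i)$ modulo the commutator ideal $I$, leaving a genuine morphism $\mathbb{K}^\times\times\mathbb{A}^2\to\mathbb{A}^2$. This is exactly the non-degeneracy recorded in Lemma \ref{MLcor}, and more fundamentally it follows from injectivity of $\Phi$: a nontrivial free-algebra automorphism cannot collapse to the identity under abelianization. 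By \cite{BB2} (equivalently \cite{RefB}) $\bar\sigma$ is linearizable, so there is $\bar\beta\in\Aut\mathbb{K}[x_1,x_2]$ with $\bar\beta^{-1}\circ\bar\sigma(t)\circ\bar\beta=\tau(t)$ for a representation $\tau$; as $\mathbb{K}^\times$ is a one-dimensional torus and $\mathbb{K}$ is algebraically closed, every rational representation diagonalizes, so we may take $\tau(t)\colon x_i\mapsto t^{d_i}x_i$.

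Next I would lift the conjugator by setting $\beta=\Theta(\bar\beta)\in\Aut\mathbb{K}\langle z_1,z_2\rangle$. Because $\Phi$ is a group homomorphism,
\[
\Phi\bigl(\beta^{-1}\circ\sigma(t)\circ\beta\bigr)=\bar\beta^{-1}\circ\bar\sigma(t)\circ\bar\beta=\tau(t).
\]
Let $\tilde\tau(t)\colon z_i\mapsto t^{d_i}z_i$ be the evident linear automorphism of the free algebra; its abelianization is exactly $\tau(t)$. Since $\Phi$ is injective, the two free-algebra automorphisms $\beta^{-1}\circ\sigma(t)\circ\beta$ and $\tilde\tau(t)$, sharing the image $\tau(t)$ under $\Phi$, must coincide. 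Hence $\beta^{-1}\circ\sigma(t)\circ\beta=\tilde\tau(t)$ for every $t$, which exhibits $\sigma$ as linearizable.

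The main obstacle is entirely front-loaded into Theorem \ref{MLthm}: it is the rigidity of the Makar-Limanov isomorphism that lets effectiveness and, above all, linearity be detected and reconstructed after abelianization. Once $\Phi$ is granted as a group isomorphism, the remaining steps are formal, and the only points demanding care are (i) checking that $\bar\sigma$ is a regular algebraic action and not merely a homomorphism of abstract groups, so that \cite{BB2} genuinely applies, and (ii) the remark that every rational representation of $\mathbb{K}^\times$ is diagonalizable, guaranteeing that the lift $\tilde\tau$ is linear in the free variables. Neither presents a serious difficulty, so the whole weight of the theorem rests on the two-variable coincidence of the automorphism groups, which has no analogue for $n>2$ and is precisely why the argument does not extend to Conjecture \ref{bbfreeconj} in general.
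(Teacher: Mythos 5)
Your proposal is correct and follows essentially the same route as the paper: abelianize via the Makar-Limanov isomorphism $\Phi$, check effectiveness of $\bar\sigma=\Phi\circ\sigma$, linearize it by Bia\l{}ynicki-Birula \cite{BB2} (Gutwirth \cite{RefB}), and pull the conjugator back through $\Theta=\Phi^{-1}$. The only cosmetic difference is the endgame: where the paper argues by contradiction through Lemma \ref{MLcor} (nonlinear terms would lie in the commutator ideal), you conclude directly from injectivity of $\Phi$ that $\beta^{-1}\circ\sigma(t)\circ\beta$ coincides with the obvious linear lift $\tilde\tau(t)$ -- a slightly cleaner packaging of the same underlying fact.
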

\begin{proof}
Straightforward: Makar-Limanov's theorem guarantees that the abelianization homomorphism $\Phi$ is an isomorphism in the case $n=2$, which implies that if $\sigma$ is an effective regular action of $\mathbb{G}_m$ yields an action (given by $\Phi(\sigma)$) that is also effective and regular. The latter therefore is linearizable due to Bia\l{}ynicki-Birula's theorem \cite{BB2}, at which point one obtains the linearizing change of the free generators by lifting the Bia\l{}ynicki-Birula linearization with $\Theta$.
\end{proof}

In fact, the situation described above can be generalized to an $r$-torus of arbitrary dimension, conditioned on the linearity (linearizability) on the image of the action with respect to abelianization.

%provided that the linearization assumption on the result of abelianization is present. This observation motivates the following conjecture.

\begin{conj}\label{conjgentoruslinfree}
An effective regular action of $\mathbb{T}_r \simeq (\mathbb{G}_m)^r$ on $\mathbb{K}\langle z_1,\ldots, z_n\rangle$ is linearizable if and only if the action
$$
\mathbb{T}_r\rightarrow \Aut \mathbb{K}[x_1,\ldots, x_n]
$$
induced by abelianization is linearizable.
\end{conj}

At the moment, we are unaware of any counterexamples to this conjecture.

\section{Positive-root torus actions}

For a given action $\phi$ of $r$-dimensional torus $\mathbb{T}_r$ on the (commutative) polynomial algebra, let $\phi_1$ denotes its linear part, i.e., the mapping constructed from degree one components of the images $\phi(t_1,\ldots, t_r, x_i)$. If the action is regular, then the eigenvalues of $\phi_1$ are
$$
\lambda_i = \prod_j t_j^{k_{ij}},
$$
with $k_{ij}$ integers.

Following \cite{BB3}, we have the definition.

\begin{Def}\label{defposroot}
The action $\phi$ is a positive-root action if all
$k_{ij}$ are non-negative integers and for every $i$ there exists $j$ such that $k_{ij}\neq 0$.
\end{Def}

\smallskip

In this section, we consider positive-root torus actions and prove the linearity property analogous to the Bia\l{}ynicki-Birula
theorem. Positive-root actions were studied in \cite{KR,KMMLR} (under a slightly different name). In particular, the following linearization theorem for the  effective regular positive-root action on the polynomial algebra is given in \cite{KR}.
\begin{thm}[\cite{KR}]\label{thmkr}
Any effective regular positive-root action of $\mathbb{T}_r$ on $\mathbb{K}[x_1,\ldots, x_n]$ is linearizable.
\end{thm}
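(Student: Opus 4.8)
The plan is to construct the linearizing automorphism directly as the \emph{eigenstate endomorphism} and to let the positive-root hypothesis do all the work of guaranteeing that this endomorphism is invertible; this is the shape of the argument underlying \cite{KR}. Since the action is regular and, after translation, fixes the origin at which the linear part $\phi_1$ of Definition \ref{defposroot} is formed, it preserves the maximal ideal $\mathfrak{m}=(x_1,\ldots,x_n)$ and each power $\mathfrak{m}^k$. Because $\mathbb{T}_r$ is linearly reductive, the coordinate ring $A=\mathbb{K}[x_1,\ldots,x_n]$ splits as a direct sum of character spaces $A=\bigoplus_{\chi}A_\chi$, and one may choose the coordinates so that $x_i$ is a weight vector modulo $\mathfrak{m}^2$ of weight $\chi_i=(k_{i1},\ldots,k_{ir})$. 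The positive-root condition is precisely the statement that every $\chi_i$ lies in the strictly positive orthant of the character lattice $\mathbb{Z}^r$.

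First I would record the key combinatorial consequence of positivity. The equivariant splitting of the $\mathfrak{m}$-adic filtration identifies $A$ with $\gr_{\mathfrak{m}}A=\bigoplus_k \mathrm{Sym}^k(\mathfrak{m}/\mathfrak{m}^2)$ as a $\mathbb{T}_r$-module, so every weight occurring in $A$ is a \emph{nonnegative} integer combination $\sum_i a_i\chi_i$ of the $\chi_i$. Fixing the cocharacter $\mu=(1,\ldots,1)$ and setting $d_i=\langle\chi_i,\mu\rangle=\sum_j k_{ij}>0$, I would endow $A$ with the weighted polynomial grading $\deg x_i=d_i$. The previous sentence then shows that the projection $G_i:=\pi_{\chi_i}(x_i)$ of $x_i$ onto its own weight space is a \emph{finite} sum of monomials $x^a$ each satisfying $\sum_j a_j\chi_j=\chi_i$, hence each of weighted degree $\sum_j a_j d_j=d_i$; thus $G_i$ is a genuine polynomial, homogeneous of weighted degree $d_i$, with $G_i\equiv x_i \pmod{\mathfrak{m}^2}$.

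The candidate conjugating map is the eigenstate endomorphism $\phi\colon x_i\mapsto G_i$. It is homogeneous for the weighted grading and induces the identity on $\mathfrak{m}/\mathfrak{m}^2$; by the graded Nakayama lemma the $G_i$ therefore generate $A$, so $\phi$ is surjective, and a surjective endomorphism of the Noetherian ring $A$ is an automorphism. That $\phi$ linearizes $\sigma$ is then a one-line check on generators: writing $\tau$ for the diagonal action $\tau(t)^\ast x_i=\lambda_i(t)x_i$, each $G_i$ is by construction a $\sigma$-eigenvector of weight $\chi_i$, so $\sigma(t)^\ast(\phi^\ast x_i)=\sigma(t)^\ast G_i=\lambda_i(t)G_i=\phi^\ast(\tau(t)^\ast x_i)$. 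Hence $\sigma(t)^\ast\circ\phi^\ast=\phi^\ast\circ\tau(t)^\ast$ for every $t$, which is exactly $\tau(t)=\phi\circ\sigma(t)\circ\phi^{-1}$.

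I expect the genuine obstacle to be concentrated entirely in the invertibility of $\phi$, and it is here that positivity is indispensable. If some $\chi_i$ had a coordinate of mixed sign, the weight $\chi_i$ could be realized by monomials of arbitrarily large (or formally negative) weighted degree, the projection $G_i$ would fail to be polynomial or homogeneous, and the triangular structure that drives the graded Nakayama argument would collapse — which is precisely the mechanism behind the non-linearizable examples of Asanuma \cite{Asanuma}. The remaining steps (the reductive decomposition, the equivariant splitting of the filtration, and the final intertwining computation) are formal; the content is that the positive cone traps every weight of $A$, forcing the eigenstate endomorphism to be a polynomial automorphism.
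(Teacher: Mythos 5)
Your overall plan --- form the eigenstate endomorphism $\phi\colon x_i\mapsto G_i=\pi_{\chi_i}(x_i)$ and let positivity force its invertibility --- is the right mechanism (the paper itself offers no proof of Theorem~\ref{thmkr}, deferring to Kambayashi--Russell \cite{KR}), but your execution has a genuine error at the decisive step. You assert that $G_i$ is a finite sum of monomials $x^a$ with $\sum_j a_j\chi_j=\chi_i$, hence homogeneous of degree $d_i$ for the monomial grading $\deg x_j=d_j$. That does not follow: the coordinates $x_j$ are weight vectors only modulo $\mathfrak{m}^2$, so monomials in them are not weight vectors, and the projection $\pi_{\chi_i}$ does not respect the monomial grading. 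Concretely, take $r=1$, $n=2$ and the regular effective action $\sigma(t)\colon x\mapsto tx+(t-t^4)y^2,\; y\mapsto t^2y$; its roots $\chi_1=1$, $\chi_2=2$ are positive, yet $G_1=\pi_{\chi_1}(x)=x+y^2$ (check: $\sigma(t)(x+y^2)=t(x+y^2)$), which contains the monomial $y^2$ of naive weight $2\chi_2=4\neq\chi_1$ and is not homogeneous for $\deg x=1$, $\deg y=2$. This matters because graded Nakayama genuinely requires homogeneity: in $\mathbb{K}[x]$ the element $x+x^2$ spans $\mathfrak{m}/\mathfrak{m}^2$ but generates a proper subalgebra. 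So, as written, your surjectivity claim for $\phi$ is unsupported.

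The gap is repairable, and the repair shows where positivity really enters. Grade $A$ by torus weight rather than by monomials: $A=\bigoplus_{d}A^{(d)}$ with $A^{(d)}=\bigoplus_{\langle\chi,\mu\rangle=d}A_\chi$. Then $G_i$ is homogeneous of degree $d_i$ by construction, with no monomial computation needed. What requires proof is that this is an $\mathbb{N}$-grading with $A^{(0)}=\mathbb{K}$ and irrelevant ideal $\bigoplus_{d>0}A^{(d)}=\mathfrak{m}$; both follow from positivity via the leading-form argument you already made in $\gr_{\mathfrak{m}}A$: every weight of $A$ lies in the cone generated by the $\chi_i$, and a nonconstant invariant would have a nonzero weight-zero leading form in some $\mathrm{Sym}^k(\mathfrak{m}/\mathfrak{m}^2)$ with $k\geq 1$, which is impossible when all $\chi_i$ lie in the open positive orthant. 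With that in place, graded Nakayama applies to the homogeneous elements $G_i$, which span $\mathfrak{m}/\mathfrak{m}^2$; hence they generate $A$, so $\phi$ is surjective, hence bijective by Noetherianity, and your intertwining computation completes the linearization. A side remark: your closing heuristic that without positivity ``$G_i$ would fail to be polynomial'' is inaccurate --- weight components of a polynomial under a rational torus action are always polynomials; what breaks without positivity is that invariants need not be constants, so $\mathfrak{m}$ is no longer the irrelevant ideal of an $\mathbb{N}$-grading and the generation argument collapses, as it must in view of Asanuma's counterexamples \cite{Asanuma}.
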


The assumption on the actions turns out to be strong enough for the linearization to be achieved regardless of the torus's
dimension. We employ the notion of generic matrices and use reduction to generic matrices, which amounts to inducing
(commutative) polynomial mappings and positive-root torus actions on commutative polynomial algebras, where linearization may be used to arrive at linearization of the initial action.

\smallskip

More precisely, we establish the following theorem, which is the free associative analog of the Theorem \ref{thmkr} and is regarded the main result of the present section.
\begin{thm}\label{thmkrfree}
 Any regular torus action $\sigma:\mathbb{T}_r\times F_n\to F_n$  with positive roots  is linearizable.
\end{thm}

As was mentioned in the introduction, direct adaptation of proofs in the commutative category to the free associative case (as well as other associative algebras) has its limitations. Nevertheless, sometimes imposition of some additional assumptions may pave the way for a novel proof!

We discuss two potential approaches on the proof of Theorem \ref{thmkrfree}. In both directions the candidate for the conjugation mapping is given by the ``eigenstate'' endomorphism, but they differ somehow in the way they establish invertibility. The first approach associates to the hypothesized conjugation mapping $\phi$ a submodule of the free bimodule of K{\"a}hler differentials of the free algebra, after which certain tools from general algebra are used to demonstrate
that this submodule coincides with the entire bimodule. Note well that this approach which is showed in Subsection \ref{generic} hinges on the validity of Conjecture \ref{lemgroeb} regarding the existence of a finite Gr\"obner basis.
\smallskip

The techniques involving variants of the associative normal form, Gr{\"o}bner bases and special systems of (bi)module generators are relevant in this context. The general framework of noncommutative Gr{\"o}bner bases and the normal form was developed by Bergman in \cite{Berg}. The underlying theory has since proven its utility and relevance to many curious and delicate phenomena. It is, for instance, known that the problem of determining whether an element belongs to a two-sided ideal of a free algebra is (algorithmically) undecidable -- on the other hand, left and right ideals in a free algebra admit finite Gr{\"o}bner bases (the latter statement follows, in particular, from the results of the paper \cite{KBTrees}). Also, for algebras admitting finite Gr{\"o}bner bases the \emph{equality problem} is decidable, while the nilpotency and the zero divisors problems are not (cf. \cite{IPMal}). On the other hand, still, for \emph{monomial algebras} (i.e. algebras satisfying monomial identities) the zero divisors problem is decidable, see \cite{BBL} and \cite{KBTrees}. Overall, the exploration of Gr{\"o}bner bases of ideals and modules over free algebras represents an interesting ongoing line of inquiry and in part motivates our considerations in this instance.

In the second approach, we success to prove Theorem \ref{thmkrfree} without any additional conjecture by employing
a result of Yagzhev on rational invertibility of free algebra endomorphisms,
 while polynomial invertibility of a rationally invertible map whose liftings to generic matrices are polynomially invertible
 is established along the lines of \cite{Yag1} and \cite{Yag3}.

\smallskip

Recently, substantial progress in certain areas of affine algebraic geometry has been made, enabled by the utilization of techniques from mathematical physics, deformation quantization in particular \cite{BKK1, K-BK2, Kon, Kon2}. In the papers \cite{BKK1, K-BK2, Kon2} a reduction of the Weyl algebra modulo infinite prime was introduced in order to induce a Poisson algebra structure on the center and, consequently, using ultraproduct constructs, define a homomorphism from Weyl algebra endomorphisms to polynomial symplectomorphisms, essentially by restriction after an ultraproduct embedding (see \cite{K-BK2}, \cite{Tsu1, Tsu2} and \cite{K-BE2} for details). The aforementioned homomorphism distinguishes globally invertible maps and thus establishes an equivalence between the Jacobian Conjecture and the Dixmier Conjecture on Weyl algebra endomorphisms.

\smallskip

Oftentimes, passing to the noncommutative setting provides one with a clearer picture compared to the commutative case. Apart from the aforementioned result on the equivalence between the Jacobian and the Dixmier Conjecture, alternative proofs using quantization have been found for several classical results, such as Bergman's centralizer theorem, the problem of construction of an algebraically closed skew field, and others -- see \cite{KBL} and \cite{Z2} for a review of relevant developments. Reduction to generic matrices and deformation quantization were used in \cite{KRZ, KRZ2} to establish an alternative proof to Bergman's centralizer theorem (cf. also \cite{Raz1, Z1}). Furthermore, some of the methods of this section (as inspired by Yagzhev) have been observed to bear some resemblance to certain generalities described in, for instance, \cite{BKK1} and \cite{GW} (cf. also \cite{E1, Z1}).

\smallskip

The Jacobian Conjecture in its algebraic reformulation has been resolved in the affirmative for certain classes of associative algebras, in particular, for free associative algebras by Yagzhev \cite{Yag3} and for free metabelian algebras by Umirbaev \cite{Umir2}. Yagzhev's proof of the Jacobian Conjecture for the free associative algebra is based on the idea of deformation. If an endomorphism of the free algebra $F_n$ does not admit an inverse (polynomial, in the free generators), then it induces a parametric family of algebra pre-images which is incompatible with the invertibility of the Jacobian. In this context, the connection between the Jacobian problem for the free algebra and the linearity problem for regular positive-root torus actions becomes particularly interesting.

\subsection{Reduction to generic matrices}\label{generic}

In order to prove the free associative version of this theorem, we devise a way to reduce the positive-root case to the commutative
case. To that end, we introduce  generic matrices and induce the action on the rings of coefficients. Let $M_n(\mathbb R)$ denote the ring of
$n$ by $n$ matrices with entries in $\mathbb R$.

\begin{Def}\label{defgenmatr}
A \textbf{generic $m\times m$ matrix}  is a matrix

$$ Y_i:=
\begin{bmatrix}
x_{11} & x_{12} & \ldots & x_{1m}\\
\vdots & \vdots &\ddots & \vdots\\
x_{m1} & x_{m2} & \ldots & x_{mm}
\end{bmatrix}
$$
whose entries are commuting indeterminates $x_{ij}$. The
$\mathbb{K}$-\textbf{algebra $\mathbb{K}\langle Y_1, \dots,
Y_n\rangle_m$ of $n$ generic $m\times m$ matrices} is the algebra
generated over $\mathbb{K}$ by $n$ distinct generic $m\times m$
matrices. It is a subalgebra of the algebra
$$
M_m(\mathbb{K}[  x^{(1)}_{ij}, \ldots,  x^{(n)}_{ij}])
$$
of matrices with  polynomial entries in $x^{(k)}_{ij}$, $1\leq
i,j\leq m$, $1\leq k \leq n$.
\end{Def}
\begin{remark}\label{remark54}
The algebra $\mathbb{K}\langle Y_1, \dots, Y_n\rangle_m$ is a basic object in
the study of the polynomial identities and the invariants of the generic $m \times m$ matrices, which has been extensively utilized by Amitsur in \cite[\S~1.8]{BKR}. There is a canonical surjection from the free algebra $F_n=\mathbb{K}\langle z_1,\ldots, z_n\rangle$ to
$\mathbb{K}\langle Y_1,\dots,Y_n\rangle_m$, that sends the generator $z_i$ to $Y_i$. For an infinite field $\mathbb{K}$, the kernel of this map is the $T$-ideal $I_m$ of the polynomial identities satisfied by the matrix algebra $M_m(\mathbb{K})$, as stated in \cite[Remark~1.8.12]{BKR}. These polynomial identities have a constant term  0. Additional related results can be found elsewhere (cf. for instance \cite{ALB, BBRY,Schof}). The connection between this topic and the Jacobian Conjecture was established by Yagzhev
\cite{Yag1, Yag2}. Furthermore, the free associative analog of the Jacobian Conjecture has been studied in  \cite{Di, DiLev}, where a relevant criterion was formulated and proved.
\end{remark}

%As we have have seen already, the T-ideal is in particular a subrepresentation of $F_n $ which can be seen as a direct sum of monomials $\prod_j z_{\pi(j'j)}^{k_{ij}}$ and if we suppose the generic matrices $t_{k_{ij}}$ to be diagonalizable then each monomial will be identified with $\det(t_{k_{ij}}):t_{k_{ij}}\to \mathbb G_m$.
\smallskip

Let $h$ be an endomorphism of the free associative algebra $F_n$ (over $n$ generators). Then since $I_m$ is a $T$-ideal, then we have $h(I_m)\subseteq I_m$ for all $m$. Hence $h$ induces an endomorphism
$h_{I_m}$ of $F_n/ I_m$. If $h$ is invertible, then $h_{I_m}$ is invertible, but the converse need not be true.

\smallskip
%!! The rest of the section needs to be re-worked.

Now, take $h$ to be an element of the group of transformations
$\Delta_n \subset \End_{\mathbb K}(F_n)$ satisfying  the requirement that its elements have invertible Jacobian and $h(z_i)=\alpha_i z_i+\varphi_i(z_1,\cdots,z_{i-1})$ for
$\alpha_i\in \mathbb{K}^{\times}$ with each $\varphi_i$  a polynomial of order at
least $2$.

In \cite{Di, DiLev, Schof}, it has been shown that the invertibility of the Jacobian
implies invertibility of the mapping $h$. See also \cite{BBRY} for an exposition.

\smallskip

Our argument is based on the techniques referenced in Remark \ref{remark54}  and proceeds as follows. We need to demonstrate that the endomorphism of the free
algebra sending the generators to the eigenvectors of the positive-root action is an automorphism (and thus the desired
 coordinate change for linearizing). To that end, expanding upon the ideas of Yagzhev \cite{Yag1, Yag2} (see also the exposition in \cite{BBRY}), we induce the endomorphism of the algebra of generic matrices $\mathbb{K}\langle Y_1, \dots, Y_n\rangle_m$ (for arbitrary $m$). This in turn induces an endomorphism on the ring of coefficients, which is the commutative polynomial algebra over $nm^2$ variables. The induced mapping corresponds to a positive-root torus action on the commutative algebra, and by Theorem \ref{thmkr} is linearizable via the eigenvector map. Therefore, the induced
mappings themselves are automorphisms. This implies (a nontrivial fact that utilizes techniques from universal algebra) that the
Jacobian of the initial endomorphism is invertible, which together with the free associative Jacobian Conjecture, proved in \cite{Di,DiLev,Schof}, shows invertibility of the initial endomorphism.

\smallskip

\smallskip
To demonstrate the assertion, consider the eigenstate mapping
\begin{equation}\label{eq77}
z_i\mapsto v_i(z_1,\ldots, z_n)
\end{equation}
into the character vectors $v_i$ corresponding to the eigenvalues $\lambda_i$ of the positive-root torus action (the former are read off from the character-indexed decomposition of $F_n$ in accordance with the situation described after Definition \ref{defcharacter}).
The mapping induces an endomorphism
$$
\phi: z_i\mapsto v_i(z_i)
$$
of the free algebra $F_n$.

Replacing $z_1,\ldots,z_n$ by generic $m\times m$  matrices $Y_1,\ldots, Y_n$ and
passing to the coefficient algebras, we construct a polynomial
mapping
$$\Phi_{(m)}:\mathbb{A}_\mathbb{K}^{nm^2} \rightarrow \mathbb{A}_\mathbb{K}^{nm^2}.$$
Note that the ring of coefficients of $m\times m$ generic matrices
viewed in $R=M_m(\mathbb K[x_{ij}^{k}])$ is
$\mathcal{O}(\mathbb{A}_\mathbb{K}^{nm^2}),$ the polynomial ring in
$nm^2$ variables.

Therefore, the mapping $\phi$ induces, for each $m$, an endomorphism
$$
\phi_m: \mathcal{O}(\mathbb{A}_\mathbb{K}^{nm^2})\rightarrow
\mathcal{O}(\mathbb{A}_\mathbb{K}^{nm^2})
$$
of the polynomial algebra.

\smallskip
This mapping corresponds to a positive-root torus action on the
commutative polynomial algebra. By Theorem \ref{thmkr}, it is
invertible and  linearizable.

\smallskip

In order to go back to the invertibility of the initial map, we need to make precise the notion of the Jacobian of an endomorphism of $F_n$. We proceed along the lines of \cite{DiLev}.

Let $F_n$ be the free associative $\mathbb{K}$-algebra. Recall that
the \textbf{module of (K{\"a}hler) differentials}
$\Omega_{F_n;\mathbb{K}}$ of $F_n$ is defined as follows: If
$z_1,\ldots, z_n$ is the set of generators of $F_n$, then
$\Omega_{F_n;\mathbb{K}}$ is   the direct sum
$$
\oplus_{i=1}^{n}F_n \delta(z_i) F_n,
$$
with $\delta(z_i)$ formal elements and
$$
F_n \delta(z_i) F_n\simeq F_n\otimes_{\mathbb{K}}F_n
$$
(the latter bimodule isomorphism is given by $\delta(z_i)\rightarrow
1\otimes 1$). The bimodule $\Omega_{F_n;\mathbb{K}}$ is isomorphic
to the kernel of the map
$$
F_n\otimes F_n\rightarrow F_n,\;\;a\otimes b\mapsto ab,
$$
by means of the identification $\delta(a)\to  a\otimes 1 - 1\otimes
a$ for all $a\in F_n$.

A \textbf{universal $\mathbb{K}$-derivation} is a derivation
$$
\delta: F_n\rightarrow \Omega_{F_n;\mathbb{K}}
$$
which is ``initial'' in the sense that any other derivation
$F_n\rightarrow M$ factors through $\delta$.

The $F_n$-$F_n$-bimodule $F_n\otimes F_n$ can be turned into a
$\mathbb{K}$-algebra by defining  multiplication
$$
(a\otimes b)(c\otimes d) = ac\otimes db
$$
on simple tensors and extending by linearity. The resulting algebra
is denoted by~$F_n^e$ and is called the \textbf{enveloping algebra}
of $F_n$. The free algebra $F_n$ carries the structure of a left
$F_n^e$-module given by
$$
(a\otimes b)c = acb.
$$
The multiplication map $F_n^e\rightarrow F_n$ (given by $a\otimes b
\mapsto ab)$ is an $F_n^e$-module homomorphism, which means that
$\Omega_{F_n;\mathbb{K}}$ is a left ideal of $F_n^e$ with  free left
$F_n^e$-module basis $\lbrace \delta(z_1),\ldots,
\delta(z_n)\rbrace$. The image $\delta(a)$ of every element $a\in
F_n$ can therefore be uniquely written as an $F_n^e$-linear
combination of $\delta(z_i)$. We denote it by
$$
\delta(a) = \sum_{i=1}^n \frac{\partial a}{\partial z_i}\delta(z_i).
$$
The projection maps
$$
\Omega_{F_n;\mathbb{K}}\rightarrow F_n \delta(z_i) F_n\simeq
F_n\otimes F_n
$$
are left $F_n^e$-linear, from which it follows that the maps
$$
\frac{\partial }{\partial z_j }:F_n\rightarrow F_n\otimes F_n
$$
are $\mathbb{K}$-derivations defined by
$$
\frac{\partial z_i}{\partial z_j}=\delta_{ij}\otimes 1.
$$

\smallskip

 For an endomorphism $\phi:F_n\rightarrow F_n$, define its Jacobian matrix as
$$
J(\phi) = \left[\frac{\partial \phi(z_i)}{\partial z_j}\right]\in
M_n(F_n^e).
$$
Every endomorphism $\phi$ gives rise to a submodule of
$\Omega_{F_n;\mathbb{K}}$ spanned by the images $\delta(\phi(z_i))$.
We denote this module by $\mathcal{M}(\phi)$.
%The standard lexicographical order on monomials of $F_n$ induces a natural order on monomials in $\Omega_{F_n;\mathbb{K}}$. One orders $\delta x_i$ in accordance with $x_1>x_2>\ldots > x_n$, then a monomial $a_1\ldots a_k \delta x_i b_1\ldots b_l$ ($a_i, b_j\in \lbrace x_1,\ldots, x_n\rbrace$) is distinguished by the total length $(k+l)$, the number $i$ in $\delta x_i$, the length $k$ of the monomial to the left of $\delta x_i$ and, lastly, lexicographical order of the monomial $a_1\ldots a_k b_1\ldots b_l$.

The  main conjecture can now be stated.
\begin{conj}\label{lemgroeb}
There exists a partial order on words in the module $\mathcal{M}(\phi)$ such that $\mathcal{M}(\phi)$  admits a finite Gr{\"o}bner basis, whose leading terms are of total length $\leq 2\Deg(\phi)$.
\end{conj}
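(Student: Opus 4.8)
The plan is to realize $\mathcal{M}(\phi)$ as a submodule of the free left $F_n^e$-module $\Omega_{F_n/\mathbb{K}}=\bigoplus_{j=1}^n F_n^e\,\delta(x_j)$ and run a Gr\"obner--Shirshov completion there. The monomials are the pointed words $u\,\delta(x_j)\,v$, where $u,v$ are words in $x_1,\dots,x_n$ and the bimodule action is $(a\otimes b)\cdot\big(u\,\delta(x_j)\,v\big)=(au)\,\delta(x_j)\,(vb)$. First I would fix a monomial order: compare $u\,\delta(x_j)\,v$ first by total length $|u|+|v|$, then break ties lexicographically in $(j,u,v)$. This is a well-order compatible with the two-sided action, so Bergman's diamond lemma applies. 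The generators of $\mathcal{M}(\phi)$ are $g_i=\delta(\phi(x_i))=\sum_j \frac{\partial \phi(x_i)}{\partial x_j}\,\delta(x_j)$; since $\phi(x_i)$ has degree $\le \Deg(\phi)$, the Leibniz expansion shows that every monomial of $g_i$, in particular its leading monomial $\ell_i$, has length $|u|+|v|\le \Deg(\phi)-1$.

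The structural lever is the positive-root hypothesis. Writing $w_i=\sum_k k_{ik}>0$ for the weight of $x_i$, any word of length $\ge 2$ occurring in the eigenvector $v_i=\phi(x_i)$ carries eigenvalue $\lambda_i$, so the weights of its letters sum to $w_i$; as every weight is positive and there are at least two letters, each such letter has weight strictly below $w_i$. Reindexing the variables by weight, we obtain $\phi(x_i)=\alpha_i x_i+\varphi_i$ with $\alpha_i\in\mathbb{K}^{\times}$ and $\varphi_i$ supported on variables of strictly smaller weight. Consequently $g_i=\alpha_i\,\delta(x_i)+\delta(\varphi_i)$, where $\delta(\varphi_i)$ involves only $\delta(x_j)$ with $w_j<w_i$, and back-substitution yields the relations $\delta(x_i)\equiv \alpha_i^{-1}\big(g_i-\delta(\varphi_i)\big)$, which already place every $\delta(x_i)$ in $\mathcal{M}(\phi)$. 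This strictly weight-decreasing dependence is exactly what severely constrains the overlaps among the $\ell_i$.

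I would then perform Buchberger/Bergman completion. Any ambiguity between two leading monomials $\ell_a,\ell_b$ (each of length $\le \Deg(\phi)-1$) produces an S-element all of whose monomials have length at most $|\ell_a|+|\ell_b|\le 2\Deg(\phi)-2<2\Deg(\phi)$, and reduction never raises the leading length; hence every element entering the basis has leading term of length $\le 2\Deg(\phi)$, which is the asserted bound. The substance of the statement is then twofold: (i) the completion terminates in a finite basis, and (ii) no ambiguity forces an element of length exceeding $2\Deg(\phi)$ — equivalently, the first generation of S-elements already reduces to zero modulo the $g_i$ together with the finitely many back-substitution relations above. Given the triangular structure, the natural route to (ii) is to check, via the diamond lemma, that each ambiguity resolves after a single reduction in the weight filtration, so that no higher-degree relations cascade.

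The main obstacle is (i). In the noncommutative setting Gr\"obner--Shirshov bases are generically \emph{infinite}, so finiteness cannot come from formal nonsense and must be extracted from the positive-root structure. The most promising route is the reduction to generic matrices of this section: for each $N$ the induced map $\phi_N$ is a positive-root, hence (by Theorem \ref{thmkr}) linearizable, automorphism of $\mathcal{O}(\mathbb{A}_\mathbb{K}^{nN^2})$, whose Jacobian module admits a \emph{finite} Gr\"obner basis by the Hilbert basis theorem. Lifting this commutative finiteness back to $F_n^e$ — i.e. proving that once $N$ is large relative to $2\Deg(\phi)$, resolvability of all specialized ambiguities under the dictionary $G/[G,G]=\mathcal{O}(\mathbb{A}_\mathbb{K}^{nN^2})$ forces resolvability of the free ambiguities below length $2\Deg(\phi)$ — is the crux. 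Marrying the diamond-lemma bookkeeping to this generic-matrix comparison, uniformly in $N$, is where I expect the real difficulty to lie.
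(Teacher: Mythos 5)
First, a point of calibration: the paper does \emph{not} prove this statement. It is stated as Conjecture~\ref{lemgroeb}, it is invoked as a hypothesis inside the proof of Proposition~\ref{propjacobian}, and Subsection~5.2 (Yagzhev's rational-invertibility theorem) exists precisely to give an argument ``free from the assumption of Conjecture~\ref{lemgroeb}.'' So there is no proof in the paper to match, and your text, judged on its own, is a research plan rather than a proof: the two assertions that constitute the conjecture --- termination of the completion in a \emph{finite} basis, and the bound $2\Deg(\phi)$ on leading terms --- are exactly the items (i) and (ii) that you leave open and defer to a generic-matrix comparison you do not carry out. Note also that your intermediate claim ``reduction never raises the leading length; hence every element entering the basis has leading term of length $\leq 2\Deg(\phi)$'' does not follow: first-generation elements can have leading monomials of length up to $2\Deg(\phi)-2$, and ambiguities \emph{among those} produce words of length up to roughly $4\Deg(\phi)$, so the stated bound propagates only if the completion stabilizes after one generation --- which is claim (ii) again, not a consequence of the bookkeeping.

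Second, and more seriously, the structural lever you rely on is false. It is not true that every word of length $\geq 2$ occurring in the eigenvector $v_i$ has diagonal weight $\lambda_i$: $v_i$ is an eigenvector of the \emph{nonlinear} action $\sigma$, not of its linear part, and its higher-degree words generically carry weights different from $\lambda_i$. Concretely, take $n=2$, $r=1$ and the regular effective action on $\mathbb{K}\langle x,y\rangle$ given by
$$
\sigma(t)(x) = t^5 x,\qquad \sigma(t)(y) = t\,y + (t - t^{10})\,x^2,
$$
which is the conjugate of the diagonal action $\diag(t^5,t)$ by $(x,y)\mapsto (x,\,y+x^2)$; the group law is checked directly, and the roots $5$ and $1$ are positive. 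The eigenstate map is $x\mapsto x$, $y\mapsto y+x^2$, and the word $x^2$ occurring in $v_2$ has weight $10\neq 1$; each of its letters has weight $5$, \emph{larger} than the weight of $y$. So the strictly weight-decreasing triangular structure you deduce (``$\varphi_i$ supported on variables of strictly smaller weight''), the back-substitution placing every $\delta(x_i)$ in $\mathcal{M}(\phi)$, and the claimed severe constraint on overlaps all collapse. (Indeed, if your triangularity claim were true in general, $\phi$ would be invertible by elementary triangular back-substitution and one would have $\mathcal{M}(\phi)=\Omega_{F_n/\mathbb{K}}$ outright, rendering both the conjecture and the entire machinery of this section unnecessary --- a sign the claim is too strong.) What survives is the correct formal setup --- monomials $u\,\delta(x_j)\,v$, a length-then-lexicographic order, generators $g_i=\delta(\phi(x_i))$ with leading terms of length $\leq \Deg(\phi)-1$ --- and the reasonable suggestion that generic matrices should be the source of finiteness; but the passage from commutative finiteness (Theorem~\ref{thmkr} plus the Hilbert basis theorem) back to the free bimodule is precisely the unproved content of the conjecture.
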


As was mentioned in the introductory remarks to the present Section, it is known that the problem of determining whether an element belongs to a two-sided ideal of a free algebra is undecidable, but on the other hand, left and right ideals in a free algebra admit finite Gr{\"o}bner bases. The proof of the latter statement is contained \cite{KBTrees}). There is an indication that the situation described by Conjecture \ref{lemgroeb} as well as by its generalization, Conjecture \ref{lemgroebgen} below, whose underlying structure is that of modules over a tensor product of free algebras, is largely analogous to the one considered in \cite{KBTrees}.

\smallskip

Let us consider a somewhat more general situation. Let $F_{n,k}$
denote the $k$-th tensor power of $F_n$ and let $\mathcal{M}$ be a finitely generated free $F_{n,k}$--$F_{n,k}$ bimodule.
\begin{conj}\label{lemgroebgen}
If $L$ is a submodule of $\mathcal{M}$, then $L$ admits a finite Gr{\"o}bner basis.
\end{conj}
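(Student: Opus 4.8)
The plan is to build a two-sided Gr\"obner (Gr\"obner--Shirshov) theory for modules over $A := F_{n,k}$ and to reduce the assertion to a single finiteness statement about divisibility of bimodule monomials. First I would record that, since $F_{n,k}=F_n^{\otimes k}$ is the free algebra on the $nk$ generators $z_i^{(a)}$ modulo the commutators $[z_i^{(a)},z_j^{(b)}]$ of generators from distinct tensor factors, all computations can be carried out in the genuinely free ambient algebra $\widetilde{A}=\mathbb{K}\langle z_i^{(a)}\rangle$, treating these commutators as part of the working relator set. A free $A$-$A$-bimodule $\mathcal{M}$ with basis $e_1,\dots,e_m$ then has monomials $u\,e_j\,v$ ($u,v$ words, $1\le j\le m$), with two-sided multiplication $p\,(u e_j v)\,q=(pu)\,e_j\,(vq)$; I declare $u e_j v$ to \emph{divide} $u' e_{j'} v'$ when $j=j'$, $u$ is a suffix of $u'$, and $v$ is a prefix of $v'$. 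Fixing an admissible order (a well-order refining total length and compatible with left and right multiplication) makes every single-step reduction strictly decrease the leading monomial, so reduction of any element modulo a set $G\subseteq L$ always terminates.

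Next I would prove the bimodule Composition--Diamond lemma, the module analogue of Bergman's diamond lemma: a generating set $G$ of $L$ is a Gr\"obner basis exactly when every composition --- obtained from an overlap of two leading monomials of $G$, or of a leading monomial with one of the commutator relators of $A$ --- reduces to zero modulo $G$. Shirshov--Buchberger completion applied to a finite generating set of $L$ then yields a (possibly infinite) Gr\"obner basis $G_\infty$, and at this point the \textbf{entire} content of the statement is the \emph{finiteness} of $G_\infty$. This is equivalent to saying that the monomial submodule $\mathrm{LM}(L)$ spanned by the leading monomials of $L$ is finitely generated, i.e. that the set of \emph{minimal} bimodule monomials in $\mathrm{LM}(L)$ under the divisibility above is finite.

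This last reduction --- a noncommutative Dickson lemma --- is where I expect the decisive difficulty, and it is in truth the whole problem. In the commutative case the relevant poset is $(\mathbb{N}^m,\le)$, a well-partial-order by Dickson's lemma, so every monomial ideal is automatically finitely generated; over the free algebra the order on $u e_j v$ is assembled from the suffix order on $u$ and the prefix order on $v$, and neither is a well-quasi-order once the alphabet has at least two letters (the words $z_1^{(1)}(z_2^{(1)})^{\,i}$ form an infinite suffix-antichain, and $(z_2^{(1)})^{\,i}z_1^{(1)}$ an infinite prefix-antichain). Hence finiteness cannot come from well-quasi-ordering the monomials alone. The one route I see to carry the plan through is to establish a \emph{uniform length bound} on the minimal leading monomials of $L$: if every minimal generator of $\mathrm{LM}(L)$ has total length at most some $D$, then minimality confines them to finitely many length strata and finiteness follows. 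Producing such a $D$ for a general $L$ --- presumably by exploiting the two-sided action together with the partial commutativity of $F_{n,k}$, and relating $D$ to the degree $\Deg$ of a chosen generating set in the spirit of Conjecture \ref{lemgroeb} --- is the hard part, and the step on which the truth of the statement ultimately rests.
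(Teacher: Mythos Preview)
The statement you are trying to prove is labeled in the paper as a \emph{Conjecture} (Conjecture~\ref{lemgroebgen}); the paper provides no proof of it and does not claim one. There is therefore no argument in the paper against which to compare your proposal. The companion statement, Conjecture~\ref{lemgroeb}, is likewise left open and is invoked only hypothetically in the proof of Proposition~\ref{propjacobian}.

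Your write-up is in fact an honest diagnosis rather than a proof: you set up the standard bimodule Gr\"obner--Shirshov framework, reduce the question to finite generation of the leading-monomial submodule $\mathrm{LM}(L)$, and then correctly observe that the needed finiteness does \emph{not} follow from any well-quasi-order argument, because the suffix and prefix orders on words over an alphabet of size $\ge 2$ admit infinite antichains. That observation is exactly the point; it is why the statement is a conjecture and not a theorem. Your proposed escape route---a uniform degree bound $D$ on minimal leading monomials, depending only on the degree of a generating set of $L$---is precisely the content the paper hopes for in Conjecture~\ref{lemgroeb}, and no mechanism for producing such a bound is offered there either.

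So there is no gap in your reasoning beyond the one you have already named yourself; but that gap is the entire problem, and nothing short of a genuinely new idea (or additional hypotheses on $L$, such as those implicit in the special submodules $\mathcal{M}(\phi)$ of Conjecture~\ref{lemgroeb}) will close it. Indeed, for $k=1$ and $n\ge 2$ one already knows two-sided ideals of $F_n$---submodules of the rank-one free bimodule---which fail to have a finite Gr\"obner basis with respect to any fixed admissible order, so the conjecture as stated is at best delicate and quite possibly false without further restrictions.
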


This statement  would allows one, by passing to the algebra of generic matrices of sufficiently large size $m$, to obtain the
invertibility of the Jacobian matrix of $\phi$ from the invertibility of the induced automorphisms $\phi_m$.

\begin{Cprop}\label{propjacobian}
If the Jacobian matrix of the induced endomorphism at any reduction modulo the T-ideal is invertible, then the Jacobian matrix of the initial endomorphism of the free associative algebra  is invertible.
\end{Cprop}
\begin{proof}
%One needs to prove that the submodule of relations corresponding to the Jacobian has a finite Groebner basis. This result follows from the results of \cite{KBTrees}. Alternatively, one can use the diamond lemma (as in Latyshev \cite{Lat}, see also \cite{Berg}).
%Given an endomorphism $\phi$ of the free algebra $F_n$, the images $D(x_i)$ of the generators under the mapping to the module of differentials span a $F_n$-$F_n$-bimodule. The Jacobian of $\phi$ corresponds to a submodule, which by diamond lemma admits a finite Groebner basis. The maximal length among the monomials that constitute the basis elements is therefore a constant $m$. By passing to generic matrices of size $m>m$ and noting that the submodules corresponding to the Jacobian in the two cases are the same, we conclude that the Jacobian of the endomorphism $\phi$ is invertible.
The assertion follows from the well-known observation of Amitsur and Levitzki   that the minimal degree of an identity (and hence
relation) for the algebra of $m\times m$ matrices is $2m$. (In fact equality holds by the Amitsur-Levitzki theorem \cite{AmLev1}.) Hence the free algebra $F_n=\mathbb{K}\langle z_1,\ldots, z_n\rangle$ is \textbf{locally isomorphic} to the algebra $\mathbb{K}\langle Y_1,\dots, Y_n\rangle_m$ of $m\times m$ generic matrices, in the following sense: A \emph{local isomorphism} between two
$\mathbb{K}$-algebras with polynomial identities is  a $\mathbb{K}$-module isomorphism between the submodules of words of
length $<2m$ ($m$  constant) with the additional requirement that the product of two monomials, each of degree $<m$, is mapped by the $\mathbb{K}$-module isomorphism to the product of their images. In our case, the constant $m$ is given by the size of the generic matrices.

If the endomorphism $\phi$ of $F_n$ is as in the statement, then for every $m$, one can define the module $\mathcal{M}'(\phi_m)$ which is the submodule in the module
$\Omega_{\mathbb{K}\langle
Y_1, \dots, Y_n\rangle_m/\mathbb{K}}$ in the module of differentials of the algebra of generic matrices generated by
$\delta' \phi_m(X_i)$ (where $\delta'$ is the universal derivation for $\mathbb{K}\langle Y_1, \dots, Y_n\rangle_m$). It follows from
Conjecture~\ref{lemgroeb} 
%(or rather from a setup similar to that conjecture) 
that the module $\mathcal{M}'(\phi_m)$ admits a
Gr{\"o}bner basis with leading terms of total length $\leq 2m$. Now, by assumption, the modulo T-ideal reduction $\phi_m$ is an
automorphism, which means that $J(\phi_m)$ is invertible; therefore $\mathcal{M}'(\phi_m)$ must coincide with $\Omega_{\mathbb{K}\langle
Y_1, \dots, Y_n\rangle_m/\mathbb{K}}$. This means that every elementary differential $\delta'(X_i)$ can be reduced to 0 by a
finite number of elements of total length not greater than a constant (independent of $m$). Taking $m$ sufficiently large and using the local isomorphism of the Amitsur - Levitzki theorem, we see that $\delta z_i$ can also be reduced to 0 in $\mathcal{M}(\phi)$, which means that
$$
\delta z_1,\ldots, \delta z_n\in \mathcal{M}(\phi)
$$
and, therefore, $\mathcal{M}(\phi) = \Omega_{F_n;\mathbb{K}}$. It follows that the Jacobian matrix of the endomorphism $\phi$ is
invertible.
%then for large enough $m$ (it suffices to take $m> 2\Deg \phi + 1$ thanks to Lemma \ref{lemgroeb} and the corollary after it), the replacement of $z_i$ by $m\times N$ generic matrices does not impose any identities and therefore produces a Jacobian submodule isomorphic to $\mathcal{M}(\phi)$. The invertibility of the Jacobian matrix of $\phi$ follows then from the assumption on the induced morphisms.
\end{proof}

As a consequence of Conjectural Proposition \ref{propjacobian} and the free associative Jacobian Conjecture \cite{Di, DiLev, Schof}, the mapping $\phi$ is an automorphism. This yields the desired linearization of the positive-root torus action. 

It is important to clarify that while this approach is incomplete due to the unproven Conjecture \ref{lemgroeb}, and this first approach is a heuristic argument that does not lead to a complete proof of Theorem \ref{thmkrfree}. This heuristic approach is included in the paper for motivational purposes only.

\begin{remark}\label{remnegative}
The negative-root torus action, defined analogously to the
positive-root action, with the requirement $k_{ij}>0$ replaced by
$k_{ij}<0$, is also linearizable, which can be seen by composing the
action with group inversion, thus reducing to the positive-root
case.
\end{remark}

\subsection{Rational invertibility and a theorem of Yagzhev}
We now discuss a more direct approach, independent of the assumption
of Conjecture~\ref{lemgroeb}, which is,  as with the previous route,
based on the ideas of Yagzhev \cite{Yag1, Yag2, Yag3, Yag4}.

\medskip

Let $\mathbb{K}$ be an infinite field and let $\varphi$ be an
endomorphism of the free algebra $F_n = \mathbb{K}\langle
z_1,\ldots, z_n\rangle$ given by:
$$
\varphi: (z_1,\ldots, z_n) \mapsto (A_1(z_1,\ldots, z_n),\ldots,
A_n(z_1,\ldots, z_n)).
$$
Let $P$ be a finite-dimensional unital associative
$\mathbb{K}$-algebra. For our purposes $P$ is either the algebra
$\mathcal M_m := \Mat_m(\mathbb{K})$ or a subalgebra of that algebra.

The endomorphism $\varphi$ induces a polynomial mapping $ A_P:
P^n\rightarrow P^n$ given by $$ \;\; (p_1,\ldots, p_n)\mapsto
(A_1(p_1,\ldots, p_n),\ldots, A_n(p_1,\ldots, p_n)),
$$
where $P^n$ denotes the product of $n$ copies of $P$. If one
fixes a (finite) basis in $P$ and rewrites $A_P$ in these
coordinates, one arrives at a polynomial endomorphism~$\varphi_P$ in
$n\dim_{\mathbb{K}}P$ commuting variables.

\smallskip

Let $\Deg$ be the standard $\mathbb{Z}$-grading on $F_n$ (i.e. $\Deg
z_i = 1$ for all $i$), and let $y_i$ be an extra free variable
corresponding to $z_i$. If $f\in F_n$ is a polynomial in
$z_1,\ldots, z_n$, then there is a unique $\hat{f}\in
\mathbb{K}\langle z_1,\ldots, z_n, y_1,\ldots, y_n\rangle$ such that

\begin{equation}\label{eq11}\begin{aligned} f(z_1+y_1,& \ldots, z_n+y_n) = \\ & f(y_1,\ldots, y_n) +
\hat{f}(z_1,\ldots, z_n,y_1,\ldots, y_n) + R(z_1,\ldots,
z_n,y_1,\ldots, y_n),\end{aligned}
\end{equation}
where each monomial appearing in $\hat{f}$ has degree 1 with respect to $z_1,\ldots, z_n$, and the lowest degree in $z_1,\ldots, z_n$  in $R$ is not less than $2$.

With every endomorphism $\varphi$ of $F_n$ given by
$$
\varphi: (z_1,\ldots, z_n) \mapsto (A_1(z_1,\ldots, z_n),\ldots,
A_n(z_1,\ldots, z_n)).
$$
one can associate an endomorphism $\hat{\varphi}\in\End\mathbb{K}\langle z_1,\ldots, z_n, y_1,\ldots, y_n\rangle$, determined by
\begin{gather*}
\widehat{\varphi}: (z_1,\ldots, z_n)\mapsto (\hat{A}_1(z_1,\ldots, z_n, y_1,\ldots, y_n),\ldots,\hat{A}_n(z_1,\ldots, z_n, y_1,\ldots, y_n)),\\
(y_1,\ldots, y_n)\mapsto (y_1,\ldots, y_n).
\end{gather*}

\begin{Def}\label{defjacobiend}
The endomorphism $\hat{\varphi}$ is called the  \textbf{Jacobi endomorphism} of the endomorphism $\varphi$.
\end{Def}

\smallskip

If $P$ is a subalgebra of $\mathcal M_m$, then $A_{\mathcal M_m}(P^n)\subseteq P^n$; also  obviously that $A_P$ is the restriction of $A_{\mathcal M_m}$ to $P^n$.

\begin{Def}\label{deftestalgebra}
A subalgebra $P$ of  $\mathcal M_m$ is called a \textbf{test algebra} for the endomorphism $\varphi$ if the set $\mathcal M_m^n\backslash P^n$ is not invariant under $A_{\mathcal M_m}$, i.e. if there exists an element $f\in \mathcal M_m^n\backslash P^n$, such that its image under $A_{\mathcal M_m}$ is in $P^n$.
\end{Def}
If an endomorphism $\varphi$ admits a test algebra, then it cannot be invertible, for otherwise $A_{\mathcal M_m}^{-1}(P^n)\subseteq
P^n$.

\begin{Def}\label{defspecialtestalgebra}
A test algebra $P\subseteq \mathcal M_m$ is called a \textbf{special test algebra} if there exist positive integers $r,k$ with
$m=(r+1)k$, such that $P$ consists of all block matrices of the form
$$
\left[\begin{array}{cccc|c}
\Lambda & 0            & \cdots & 0 & 0\\
0            & \Lambda & \cdots & 0 & 0\\
\vdots     & \cdots     & \cdots & 0 & 0\\
0            & \cdots     & \cdots & \Lambda & 0\\
*  & * & * & * & *
\end{array}\right]
$$
with $\Lambda \in M_k$ occurring $r$ times along the main diagonal.
\end{Def}

In his work \cite{Yag1,Yag3}, Yagzhev considers row-finite matrices, i.e. matrices which contain in each row only a finite number of nonzero entries, whose rows and columns are indexed by positive integers.

\begin{remark}\label{sf}
% In order to circumvent this difficulty,
%we  embed  the free algebra into a skew field which possesses nice
%localization properties. The interested reader may consult Paragraph
%0.8 of P. M. Cohn's textbook \cite{Cohn1}.
We need to embed the free algebra $F_n$ into a skew field. While the straightforward approach poses combinatorial challenges, this problem was nonetheless resolved by Amitsur~\cite{Am}. Jategaonkar~\cite{Ja} subsequently demonstrated that the ring of fractions of any left but not right Noetherian domain contains a free algebra. This result inspired Cohn~\cite{Cohn1} to provide five distinct embeddings.

We shall use Yagzhev's embedding, given in \cite{Yag3}, into the skew field T of fractions of an Ore extension $R[y;
\epsilon]$, where $R=\mathbb{K}(t)$ is the field of rational functions in the variables $\lbrace t = t^{(s)}_p: \; s = 1,\ldots,
n,\; p\in \mathbb{Z}\rbrace$, and $\epsilon$ is the automorphism of the extension $R|\mathbb{K}$ defined by
$$
\epsilon: t^{(s)}_p\mapsto t^{(s)}_{p+1}.
$$
Thus $y t^{(s)}_p = t^{(s)}_{p+1} y.$

 The 1:1 map
$$
\pi: F_n\rightarrow T
$$
defined by
$$
z_{i_1}\cdots z_{i_p}\mapsto  t_1^{(i_1)}\cdots t_p^{(i_p)} y^{p}
$$
is a homomorphism, since $$\pi(z_{i_1}\dots z_{i_p}) =
(t_1^{(i_1)}\dots t_q^{(i_q)} y^{q})( t_1^{(i_{q+1})}\dots
t_{p-q}^{(i_{p})} y^{p-q}) = \pi(z_{i_1}\dots
z_{i_q})\pi(z_{i_{q+1}}\dots z_{i_p}).$$   
The free algebra $F_n$ thus is a subalgebra of T.
\end{remark}

\begin{Def}\label{defrationallyinvertiblefree}
An endomorphism $\varphi\in \End F_n$ is \textbf{rationally
invertible} if every $z_i$ ($i=1,\ldots, n$) is a rational function
of $\varphi(z_1),\ldots, \varphi(z_n)$ in some skew field $\text{T}\supset
F_n$.
\end{Def}

\begin{Def}\label{defrationallyinvertible}
An endomorphism $\varphi\in \End \mathbb{K}[x_1,\ldots, x_n]$ is
\textbf{rationally invertible} if every $x_i$ ($i=1,\ldots, n$) is a
rational function of $\varphi(x_1),\ldots, \varphi(x_n)$ in
$\mathbb{K}(x_1,\ldots, x_n)$.
\end{Def}

The following theorem constitutes the main result of \cite{Yag1} (also compare with \cite{Yag3}, Lemma 3).
\begin{thm}[Yagzhev,\cite{Yag1}]\label{yagthm1}
For every endomorphism $\varphi$ of the free algebra $F_n$ at least one of the following is true:

1. $\varphi$ is rationally invertible and for every finite-dimensional unital associative algebra $P$ the induced polynomial endomorphism $\varphi_P$ is also rationally invertible.

2. $\varphi$ admits a special test algebra.
\end{thm}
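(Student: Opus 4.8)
The plan is to read the statement as a dichotomy and to attack it through a single degree-filtered inversion procedure for $\varphi$, carried out inside the skew field $T$ and, in parallel, over every finite-dimensional test algebra. Recall that the injection $\pi\colon F_n\hookrightarrow T$ realizes each generator as a weighted shift: up to the Ore relation, $x_i$ is the composition of multiplication by the parameter $t^{(i)}_0$ with the Ore variable $x$, and the standard grading $\Deg$ is recorded faithfully by powers of $x$, since $\pi(x_{i_0}\cdots x_{i_m})=x^{m+1}t_m^{(i_0)}\cdots t_0^{(i_m)}$. Writing $A_i=\varphi(x_i)$, the goal of the procedure is to solve for the $x_i$ as rational functions of $A_1,\dots,A_n$ in $T$, in the sense of Definition~\ref{defrationallyinvertiblefree}. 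The logical backbone I would aim for is: the procedure fails only by producing a nonzero, finite-dimensional obstruction; every such obstruction can be upgraded to a special test algebra in the sense of Definition~\ref{defspecialtestalgebra}; hence, in the absence of a special test algebra the procedure succeeds both over $T$ and over each finite-dimensional $P$ (recall $A_P$ is the restriction of $A_{M_N}$ to $P^n$), which is exactly the two-fold assertion of the first alternative. This reduces the theorem to the single dichotomy ``either the inversion goes through, or a special test algebra exists.''

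I would organize the inversion as an induction along the grading. The degree-one part of $\varphi$ is recorded by the Jacobian matrix $J(\varphi)$ read through $\pi$, and its linearized behaviour is governed by the Jacobi endomorphism $\hat\varphi$ of Definition~\ref{defjacobiend}. The base step is to invert the linear operator attached to $J(\varphi)$ as an endomorphism of the relevant space of finite-rowed matrices over $T$; granting that, each higher-degree correction is pinned down by an inhomogeneous linear system whose homogeneous part is again controlled by this same operator, so the $x_i$ are recovered degree by degree and the resulting rational formulas stay inside $T$ because the Ore shift keeps every intermediate expression within the skew field. Completing this induction delivers the first alternative. The single point at which it can break is the solvability of these linear systems, that is, the invertibility over $T$ of the operator coming from $J(\varphi)$.

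When that operator is not invertible, the induction jams at a finite stage and leaves a nonzero finite-dimensional space of unreachable directions; converting this into a special test algebra is the core of the argument. I would analyze the obstruction as a module over the subalgebra of $T$ generated by $A_1,\dots,A_n$, equivalently over the shift operators furnished by $\pi$. The tensor/shift structure of $T$ should force the unreachable directions to appear isotypically, and this multiplicity is precisely what manifests as the repeated diagonal blocks $\Lambda$ (with multiplicity $r$) in Definition~\ref{defspecialtestalgebra}, while the distinguished direction into which $A_{M_N}$ compresses otherwise-generic data becomes the bottom $*$-row. It then remains to truncate this infinite picture to an honest block of size $N=mrp$ and to verify the defining property of a test algebra, namely to produce an explicit $f\in M_N^n\setminus P^n$ with $A_{M_N}(f)\in P^n$, which one obtains by feeding a vector along the degenerate direction of $J(\varphi)$ through the truncated map.

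Two steps look most delicate. The first is the bookkeeping of the degree induction inside $T$: one must confirm that the Ore-shift relations really keep all intermediate rational expressions in $T$ and that the denominators produced at each stage are genuinely invertible there rather than merely formally invertible. The second, and the principal obstacle, is the extraction of the block form --- proving that the obstruction decomposes with exactly the multiplicity pattern of Definition~\ref{defspecialtestalgebra}, and that the truncation to size $N=mrp$ simultaneously preserves the invariance $A_{M_N}(P^n)\subseteq P^n$ on the relevant locus and the existence of the escaping element $f$. Once the special test algebra is constructed in the jammed case and the rational inverse is assembled otherwise, the dichotomy, and with it the theorem, follows.
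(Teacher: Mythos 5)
The paper does not prove Theorem \ref{yagthm1} at all: it is quoted as an external result of Yagzhev (the main theorem of \cite{Yag1}), so your proposal cannot be measured against an in-paper argument; it must stand on its own, and it does not. The central flaw is your logical backbone: you reduce the dichotomy to whether ``the linear operator attached to $J(\varphi)$'' is invertible over $T$, inverting degree by degree when it is, and extracting a special test algebra when it ``jams.'' That is not the dichotomy of the theorem, and it fails in both directions. First, a singular linear part does not preclude rational invertibility: the endomorphism $\varphi(x_1)=x_1$, $\varphi(x_2)=x_1x_2$ of $F_2$ has degree-one part of rank one, yet $x_2=\varphi(x_1)^{-1}\varphi(x_2)$ in any skew field containing $F_2$, and likewise each $\varphi_{M_N}$ is rationally invertible (generically $X_1$ is invertible); so this $\varphi$ falls under alternative 1, while your procedure jams at the base step and would wrongly route it toward alternative 2. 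Second, when the linear part \emph{is} invertible, your induction produces only a formal power-series inverse: an infinite sum of homogeneous corrections is not an element of $T$, which consists of rational expressions over the Ore extension, and you never argue that the series sums to one. This is not a technicality: $x\mapsto x+x^2$ in one variable has identity linear part, its formal compositional inverse exists, and yet it is not rationally invertible since $\mathbb{K}(x+x^2)\subsetneq\mathbb{K}(x)$. Rationality is exactly the content of the theorem and cannot be obtained by grading bookkeeping ``staying inside $T$.''

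Beyond this, the two steps you yourself flag as delicate are missing entirely. The passage from a claimed obstruction to a special test algebra with the precise block structure of Definition \ref{defspecialtestalgebra} (blocks $\Lambda$ repeated $r$ times, bottom $*$-row, $N=mrp$) rests on ``the tensor/shift structure of $T$ should force the unreachable directions to appear isotypically'' --- there is no construction, and no reason is given why a failure inside the infinite-dimensional skew field $T$ manifests finite-dimensionally at all. Likewise, alternative 1 requires rational invertibility of $\varphi_P$ for \emph{every} finite-dimensional unital $P$, and your ``in parallel'' carries no argument transferring invertibility from $T$ to each $P$; that transfer is itself a substantial part of Yagzhev's result. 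As it stands, the proposal is a plan whose pivot (the Jacobian-based dichotomy) is refuted by the examples above, not a proof with repairable gaps.
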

%
%Let $\Char \mathbb{K} = 0$.
We have the following important corollary to Yagzhev's theorem.

\begin{cor}[Yagzhev \cite{Yag1}]\label{yagcor} For the free algebra over a field $\mathbb{K}$ of characteristic~0, one has the  equivalence
$$
\varphi\in \Aut \mathbb{K}\langle z_1,\ldots, z_n\rangle
\Leftrightarrow \widehat{\varphi}\in\Aut \mathbb{K}\langle Z_1,\ldots,
Z_n,Y_1,\ldots, Y_n\rangle_m,
$$
where $\widehat{\varphi}$ is the Jacobi endomorphism of $\varphi$ (Definition \ref{defjacobiend}).
\end{cor}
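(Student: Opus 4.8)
The plan is to show that each of the two conditions is equivalent to a single intermediate statement, namely invertibility of the Jacobian matrix $J(\varphi)=[\partial\varphi(x_i)/\partial x_j]\in M_n(F_n^e)$. First I would record the structural observation that the Jacobi endomorphism $\hat\varphi$ is nothing but the action of $J(\varphi)$, evaluated on the auxiliary variables, on the column $(x_1,\ldots,x_n)$. Indeed, unwinding Definition \ref{defjacobiend}, for a monomial $a=x_{i_1}\cdots x_{i_k}$ the polarization $\hat a(x,y)$ is obtained by replacing, in all possible ways, exactly one factor by the corresponding $x$ and the remaining factors by $y$; comparing with the expansion $\delta(a)=\sum_j(\partial a/\partial x_j)\,\delta(x_j)$ one gets
$$
\hat A_i(x,y)=\sum_{j=1}^n\left.\frac{\partial A_i}{\partial x_j}\right|_{x\to y}\cdot x_j,
$$
so $\hat\varphi$ is exactly the $\mathbb{K}\langle y\rangle$-linear substitution whose coefficient matrix is $J(\varphi)$ (with $x$ relabelled by $y$).

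Next I would exploit the grading of $F_{2n}=\mathbb{K}\langle x,y\rangle$ by degree in the $x$-variables (each $y_i$ having $x$-degree zero). By construction $\hat\varphi$ preserves this grading: it fixes the degree-zero subalgebra $\mathbb{K}\langle y\rangle$ pointwise and carries each degree-one generator $x_i$ to a degree-one element. A graded endomorphism that is invertible has a graded inverse, which therefore again fixes $y$ and is again linear in $x$; expressing it through its own coefficient matrix $M'\in M_n(F_n^e)$ and composing via the $F_n^e$-action, the two matrices multiply to the identity. Conversely, any invertible $M'=J(\varphi)^{-1}$ defines, through the bimodule action, a two-sided inverse of $\hat\varphi$. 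Hence $\hat\varphi\in\Aut F_{2n}$ \emph{if and only if} $J(\varphi)$ is invertible in $M_n(F_n^e)$ (up to the harmless relabelling $x\leftrightarrow y$).

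With this equivalence the forward implication $\varphi\in\Aut\Rightarrow\hat\varphi\in\Aut$ is routine: differentiating $\varphi^{-1}\circ\varphi=\Id$ and using the chain rule for the derivations $\partial/\partial x_j$ shows that $J(\varphi)$ is invertible, whence $\hat\varphi\in\Aut$ by the previous paragraph. Equivalently one checks that the assignment $\varphi\mapsto\tilde\varphi$, where $\tilde\varphi\colon x_i\mapsto\hat A_i,\ y_i\mapsto A_i(y)$, is functorial (a chain rule), and that $\tilde\varphi=\hat\varphi\circ\varphi_y$ where $\varphi_y$ applies $\varphi$ to the $y$-block only; both factors are then automorphisms, so $\hat\varphi=\tilde\varphi\circ\varphi_y^{-1}$ is one as well.

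The substance of the corollary is the reverse implication, and this is where I expect the main difficulty. Assuming $\hat\varphi\in\Aut$, the grading argument delivers invertibility of $J(\varphi)$, and it remains to conclude that $\varphi$ itself is invertible. This is precisely the free associative Jacobian conjecture, available here as a theorem of Dicks--Levin and Schofield \cite{Di, DiLev, Schof}, so the implication follows. The honest obstacle is that this final step is the deep one and cannot be reached by the formal grading manipulations above: over a field of characteristic zero it is exactly what Yagzhev's dichotomy (Theorem \ref{yagthm1}) is built to supply, guaranteeing that an endomorphism with no special test algebra is rationally invertible together with all its finite-dimensional reductions $\varphi_P$. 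Ruling out a special test algebra once $J(\varphi)$ is known to be invertible, and then upgrading rational invertibility to genuine polynomial invertibility in characteristic zero, is the heart of the matter, and it is here that the characteristic-zero hypothesis is indispensable.
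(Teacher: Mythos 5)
Your proposal is correct, but it does not follow the paper's route, because the paper has no route: Corollary \ref{yagcor} is stated there without proof, imported from Yagzhev's work \cite{Yag1} as a black-box consequence of the dichotomy theorem (Theorem \ref{yagthm1}). Your argument is a genuine, self-contained derivation. Its three ingredients all check out: the identification $\hat A_i(x,y)=\sum_j \bigl(\partial A_i/\partial x_j\bigr)\big|_{x\to y}\cdot x_j$ is exactly what one gets by comparing the degree-one polarization with $\delta(a)=\sum_j(\partial a/\partial x_j)\,\delta(x_j)$; the fact that composition of $y$-fixing, $x$-linear endomorphisms corresponds to multiplication in $M_n(F_n^e)$ (under the product $(a\otimes b)(c\otimes d)=ac\otimes db$) together with the graded-inverse argument gives the equivalence $\hat\varphi\in\Aut F_{2n}\Leftrightarrow J(\varphi)$ invertible; and the two implications of the corollary then follow from the chain rule and from the Dicks--Lewin--Schofield theorem \cite{Di, DiLev, Schof}, a result the paper itself treats as established and uses in the proof of Theorem \ref{thmkrfree}, so your reliance on it is consistent with the paper's framework. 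What your approach buys is transparency: the corollary becomes a formal consequence of the free associative Jacobian conjecture, rather than a citation. Two caveats. First, your closing paragraph partially undercuts the proof you just completed: once Dicks--Lewin--Schofield is invoked, the appeal to Yagzhev's dichotomy and to ruling out special test algebras is redundant --- that machinery belongs to Yagzhev's original proof, not to yours. Second, your claim that characteristic zero is \emph{indispensable} is not supported by your own argument: the polarization identity, the grading argument, and the chain rule are characteristic-free, and the Jacobian theorem of \cite{Di, DiLev, Schof} is cited in this paper with no characteristic restriction, so your route actually establishes the equivalence without that hypothesis; characteristic zero appears in the statement only because Yagzhev's own method (test algebras and reductions to finite-dimensional algebras) requires it.
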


We now consider the endomorphism $\phi$ of the free algebra $F_n$ given by the eigenstate mapping corresponding to the positive root torus action of \eqref{eq77}. The endomorphism $\phi$ can be lifted to the algebra of $n$ generic $m\times m$ matrices, with the lifting denoted by $\phi_m$, as has been done in the previous subsection corresponding to the conjugation (i.e. linearization) of the induced positive-root torus action. The existence of such a linearization is guaranteed by Theorem \ref{thmkr}, and this mapping is invertible. Therefore, $\phi$ does not admit test algebras and by Yagzhev's Theorem~\ref{yagthm1} is rationally invertible, meaning that the pre-image generators $z_1,\ldots, z_n$ can be expressed as rational functions of
$\lbrace\phi(z_i)\rbrace$ in the skew field T (of Remark~\ref{sf}).

The final step is given by the proof of polynomial invertibility of
rationally invertible $\phi$.
The proof is a slight
modification of the proof of the main theorem in \cite{Yag1}.

Suppose that $\phi$ is rationally invertible but not polynomially
invertible; suppose also that all liftings $\phi_m$ of $\phi$ to generic
matrices are invertible.

%As the ring $R[x; \epsilon]$ admits left division (cf. \cite{Yag3}), the images $\phi(z_s)$ are of the form $h_sw_s^{-1}$, with $h_s, w_s\in R[x; \epsilon]$ (note that $h_s$ and $w_s$ are relatively prime: there are $g_s, v_s$ such that $g_s h_s+v_s w_s = 1$). According to our assumption, at least one of the denominators $w_s$ is not equal to $1$.

The first step of the proof is given by the following Lemma, formulated and proved originally by Yagzhev in \cite{Yag3, Yag1} (see also Paragraph 0.8 of \cite{Cohn1}).
\begin{lem}\label{lemdivision}
The images $\phi(z_i)$ are of the form $h_iw_i^{-1}$, with $h_i,
w_i\in R[x; \epsilon]$ (with $h_i$ and $w_i$ relatively prime: there
are $g_i, v_i$ such that $g_i h_i+v_i w_i = 1$). The ring~$R$ is as
in Remark~\ref{sf}.
\end{lem}
Essentially this Lemma is the consequence of the fact that $R[x; \epsilon]$ admits left division.

The main point of the proof is that one can take the quotient of $R[x; \epsilon]$ by cycling out the countable collection of variables $t^{(i)}_m$, for a cycle of sufficient length, in such a way that the endomorphism $\phi$ corresponding to rational
 $\phi(z_i)$ will induce an endomorphism in the reduction which will also be rational (and not polynomial) in the skew field of the quotient.

Fix a sufficiently large positive integer $q$. Let $R[x; \epsilon]_q$ be the quotient of $R[x; \epsilon]$ obtained by identifying $t_q^{(s)}$ with $t_0^{(s)}$, and let $\text{T}_q$ be its skew field of fractions.

Denote for brevity $f_i = \phi(z_i)$. Then, by Lemma \ref{lemdivision}, there exist $h_i, w_i\in R[x; \epsilon]$ such
that $f_i = h_iw_i^{-1}$  in T or, equivalently, $f_i w_i = h_i$. Let $\bar{f}_i$ denote the class of $f_i$ in the quotient $R[x;
\epsilon]_q$.  The following statement is elementary.
\begin{lem} \label{lemxdegree}
If $\Deg_x$ is the degree with respect to $x$, then one has
$$
\Deg_x \bar{f}_i \leq \max (\Deg_x \bar{h}_i, \Deg_x \bar{w}_i).
$$
\end{lem}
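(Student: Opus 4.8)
The plan is to transport the natural $x$-degree function from the skew polynomial ring to its skew field of fractions and to read off the inequality as a valuation estimate. First I would observe that after the identification of $t_q^{(s)}$ with $t_0^{(s)}$ the map $\epsilon$ descends to an automorphism of finite order $q$ of the field $R_q=\mathbb{K}(\lbrace t_m^{(s)}: 1\le s\le n,\ 0\le m<q\rbrace)$, so that $R[x;\epsilon]_q=R_q[x;\epsilon]$ is again a skew polynomial ring over a field. In such a ring the top term of a product $a b$ (with $a$ of degree $d$ and leading coefficient $a_d$, and $b$ of degree $e$ and leading coefficient $b_e$) is $a_d\,\epsilon^{d}(b_e)\,x^{d+e}$; since $R_q$ is a domain and $\epsilon$ is injective this coefficient is nonzero, giving multiplicativity $\Deg_x(ab)=\Deg_x a+\Deg_x b$. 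In particular $R_q[x;\epsilon]$ is an Ore domain and embeds into its right skew field of fractions $T_q$.

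Next I would extend $\Deg_x$ to $T_q$ by $\Deg_x(a b^{-1})=\Deg_x a-\Deg_x b$ for $a,b\in R_q[x;\epsilon]$ with $b\neq 0$. The point to verify is well-definedness: if $ab^{-1}=a'b'^{-1}$, the Ore condition supplies a common right multiple $bc=b'c'$ with $ac=a'c'$, and multiplicativity of $\Deg_x$ on $R_q[x;\epsilon]$ forces $\Deg_x a-\Deg_x b=\Deg_x a'-\Deg_x b'$. The extended function is then multiplicative on $T_q$; applied to $\bar w_i\,\bar w_i^{-1}=1$ it gives $\Deg_x \bar w_i^{-1}=-\Deg_x \bar w_i$.

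Finally I would choose $q$ large enough that the cyclic identification does not collapse any of the finitely many variables $t_m^{(s)}$ occurring in the coefficients of $h_i$ and $w_i$ onto one another; reduction is then injective on those coefficients, so $\bar w_i\neq 0$ and the relation $f_i w_i=h_i$ in $R[x;\epsilon]$ descends to $\bar f_i\,\bar w_i=\bar h_i$, i.e. $\bar f_i=\bar h_i\,\bar w_i^{-1}$ in $T_q$. Multiplicativity yields $\Deg_x \bar f_i=\Deg_x \bar h_i-\Deg_x \bar w_i$, and since $\bar w_i$ is a nonzero honest polynomial we have $\Deg_x \bar w_i\ge 0$, whence $\Deg_x \bar f_i\le \Deg_x \bar h_i\le \max(\Deg_x \bar h_i,\Deg_x \bar w_i)$, which is the assertion.

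The genuinely load-bearing point, and the step I expect to require the most care, is the well-definedness of $\Deg_x$ on $T_q$: it rests on the Ore property of $R_q[x;\epsilon]$ and on checking independence of the degree from the chosen right-fraction representative. The bookkeeping ensuring $\bar w_i\neq 0$ (and, if one wants equality of degrees rather than merely the stated inequality, that leading coefficients survive the reduction) is exactly where the phrase ``sufficiently large $q$'' does its work; everything past that is the one-line valuation estimate, which is why the statement is recorded as elementary.
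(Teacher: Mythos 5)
Your proof is correct and is essentially the paper's own (implicit) argument: the paper records the lemma as elementary, noting only the equality $\Deg_x \bar{f}_i = \Deg_x \bar{h}_i - \Deg_x \bar{w}_i$, and your argument --- identifying $R[x;\epsilon]_q$ as a skew polynomial ring over a field, extending $\Deg_x$ multiplicatively to the Ore fraction field $T_q$, and choosing $q$ large so that $\bar{w}_i \neq 0$ --- is precisely the rigorous form of that remark. The inequality then follows as you state, since $\Deg_x \bar{w}_i \geq 0$ for a nonzero polynomial denominator.
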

(In fact, one sees easily  that $\Deg_x\bar{f}_i = \Deg_x \bar{h}_i
- \Deg_x \bar{w}_i$.)
%Now, as was noted previously, the free algebra $F_n = \mathbb{K}\langle z_1,\ldots, z_n\rangle$ is embedded in $T$ by
%$$
%z_s\mapsto xt^{(0)}_s.
%$$
Then one has the following rather straightforward combinatorial
observation.
\begin{prop}\label{propmaxnumber}
The maximal $k$ such that $t^{(i)}_k$ is in $f_i$ is bounded from
above by
$$
\max(\Deg_x h_i, \Deg_x w_i) + \max\lbrace n: t^{(i)}_n\;\text{is in}\;w_i \;\text{or}\;h_i\rbrace.
$$
\end{prop}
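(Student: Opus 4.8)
The plan is to read the bound directly off the defining relation $f_i w_i = h_i$ supplied by Lemma \ref{lemdivision}, keeping track of how the twist $\epsilon$ and the field operations in $R=\mathbb{K}(\{t^{(s)}_m\})$ move around the indices of the variables $t^{(i)}_\bullet$. First I would record the one structural fact that drives everything: for a rational function $g\in R$, the set $\mathrm{supp}(g)$ of variables actually occurring in $g$ behaves well under the field operations. Writing $g=A/B$ in lowest terms with $A,B$ polynomials, one has $\mathrm{supp}(g)=\mathrm{supp}(A)\cup\mathrm{supp}(B)$, and an elementary check gives $\mathrm{supp}(g_1\pm g_2),\ \mathrm{supp}(g_1g_2)\subseteq\mathrm{supp}(g_1)\cup\mathrm{supp}(g_2)$, while a reduced fraction stays reduced under inversion, so $\mathrm{supp}(g^{-1})=\mathrm{supp}(g)$. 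Hence the quantity ``largest $k$ with $t^{(i)}_k$ occurring'' is sub-maximal under $+,-,\times$ and \emph{invariant} under inversion. Finally, since $\epsilon$ sends $t^{(s)}_m\mapsto t^{(s)}_{m+1}$, applying $\epsilon^a$ raises every such index by exactly $a$.

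Next I would expand $f_i=\sum_a p_a x^a$, $w_i=\sum_b q_b x^b$, $h_i=\sum_c r_c x^c$ with $p_a,q_b,r_c\in R$, and use the commutation rule $x^a q=\epsilon^a(q)x^a$ to collect powers of $x$, obtaining for each $c$
$$
r_c=\sum_{a+b=c} p_a\,\epsilon^a(q_b).
$$
Because $R[x;\epsilon]$ is a domain, $x$-degrees add, so with $d_h=\Deg_x h_i$, $d_w=\Deg_x w_i$ we have $d_f:=\Deg_x f_i=d_h-d_w\le d_h$. Set $M=\max\{k:\ t^{(i)}_k \text{ occurs in } h_i \text{ or } w_i\}$; the target is that every $p_a$ has all its $t^{(i)}_k$ with $k\le M+\max(d_h,d_w)$.

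I would then solve this triangular system for the $p_a$ by downward induction on $a$, which is exactly right division by $w_i$. The top equation reads $r_{d_h}=p_{d_f}\epsilon^{d_f}(q_{d_w})$, and in general, isolating the $a'=a$ term,
$$
p_a=\Big(r_{a+d_w}-\sum_{a'>a} p_{a'}\,\epsilon^{a'}(q_{a+d_w-a'})\Big)\,\epsilon^{a}(q_{d_w})^{-1},
$$
the leading coefficient $q_{d_w}$ being invertible in the field $R$. Feeding the three facts of the first paragraph into this formula, every ingredient on the right has largest $t^{(i)}$-index at most $M+\max(d_h,d_w)$: the $r$'s contribute at most $M$; each previously computed $p_{a'}$ at most $M+\max(d_h,d_w)$ by the inductive hypothesis; and each $\epsilon^{a'}(q_\bullet)$, together with the inverse $\epsilon^{a}(q_{d_w})^{-1}$, contributes at most $M+a'\le M+d_f\le M+\max(d_h,d_w)$, using $a'\le d_f\le d_h$. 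By sub-maximality this bound passes to $p_a$, completing the induction; taking the maximum over $a$ bounds the largest $t^{(i)}$-index in $f_i$ by $M+\max(d_h,d_w)$, which is the assertion.

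The only genuinely delicate point is the support behaviour under inversion in the first paragraph: unlike polynomials, the coefficients live in a field and must be inverted, so one has to be certain that taking reciprocals introduces no variable of higher index. This is what the reduced-fraction argument secures, and it is precisely the analogue, at the level of coefficients in $\mathbb{K}(t)$, of the coprimality in Lemma \ref{lemdivision}; once it is in place the remainder is just the bookkeeping of the division algorithm in the skew polynomial ring $R[x;\epsilon]$.
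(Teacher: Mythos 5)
The paper never actually proves Proposition \ref{propmaxnumber}: it is introduced with the words ``which is also rather straightforward'' and no argument follows, so there is no proof of record to compare yours against --- your writeup supplies what the paper omits. Judged on its own terms, your argument is correct, and it proves the reading of the statement that the paper actually needs afterwards (the descent to $T_q$): if a \emph{polynomial} $f_i=\sum_a p_ax^a\in R[x;\epsilon]$ satisfies $f_iw_i=h_i$, then every index $k$ with $t^{(i)}_k$ occurring in a coefficient of $f_i$ is at most $\max(\Deg_x h_i,\Deg_x w_i)+\max\lbrace n: t^{(i)}_n\ \text{in}\ h_i\ \text{or}\ w_i\rbrace$. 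Your three support facts are all valid: sub-maximality under sums and products (for products one also uses that a divisor of a polynomial over a field has support contained in that of the polynomial, which holds since degrees in each variable add in a domain), invariance under inversion of a reduced fraction, and the exact shift by $a$ under $\epsilon^a$. The downward induction through the right-division recursion $p_a=\bigl(r_{a+d_w}-\sum_{a'>a}p_{a'}\,\epsilon^{a'}(q_{a+d_w-a'})\bigr)\epsilon^{a}(q_{d_w})^{-1}$ then propagates the bound correctly, the key inequality being $a'\le d_f=d_h-d_w\le\max(d_h,d_w)$.

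Two caveats are worth flagging explicitly. First, your commutation rule $x^aq=\epsilon^a(q)x^a$ is a convention; the paper's embedding $\pi(x_{i_0}\cdots x_{i_m})=x^{m+1}t^{(i_0)}_m\cdots t^{(i_m)}_0$ forces $s\,x=x\,\epsilon(s)$, so that pulling coefficients leftward across $x$ applies $\epsilon^{-1}$ and shifts indices \emph{down}; under that convention the same bookkeeping gives the even smaller bound $\max\lbrace n: t^{(i)}_n\ \text{in}\ h_i\ \text{or}\ w_i\rbrace$, so the inequality holds either way, and your convention is precisely the one in which the degree summand is genuinely needed. Second, in the paper's surrounding text $f_i$ is a priori only rational ($f_i=h_iw_i^{-1}$ with $w_i\neq 1$), and for such $f_i$ ``the variables occurring in $f_i$'' can only mean those of the coprime pair $(h_i,w_i)$ of Lemma \ref{lemdivision}, in which case the proposition is vacuously true with bound $\max\lbrace n: t^{(i)}_n\ \text{in}\ h_i\ \text{or}\ w_i\rbrace$. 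So you should state, as your proof implicitly does, that the substantive content --- and the version needed to pass from non-divisibility in $R[x;\epsilon]$ to non-divisibility in the quotient $R[x;\epsilon]_q$ for $q$ larger than the stated bound --- is the polynomial-quotient case you prove.
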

It follows that if $\bar{f}_i$ is not polynomial in T (i.e. if
$w_i$ does not equal $1$), then there is a sufficiently large $q$
such that $\bar{f_i}$ is not polynomial in $\text{T}_q$ -- in other words,
$\phi$ is rationally, but not polynomially, invertible with respect
to $\text{T}_q$. However, as $\text{T}_q$ is finite-dimensional, the presence of a
non-unit denominator in $\phi$ implies the existence of an algebra
of generic matrices such that the lifting of $\phi$ to that algebra will
not be polynomially invertible,  contradicting  the assumption on
$\phi$. The invertibility of $\phi$ is thus established, and Theorem
\ref{thmkrfree} is proved.

\begin{remark}
As noted earlier, the proof presented above is based primarily on the work of A.V. Yagzhev \cite{Yag1, Yag2, Yag3,
Yag4}. The ideas developed in the papers referenced hold valuable
insights into the Jacobian Conjecture and could conceivably provide
a basis for its resolution. Some of the points in Yagzhev's work,
however, seem to require a more detailed analysis. Due to the
connection with the Jacobian Conjecture, as well as in light of
Tsuchimoto's pioneering work in noncommutative algebraic geometry
(\cite{Tsu1, Tsu2, Tsu3}), we believe that a more in-depth review of
Yagzhev's work would be beneficial to the mathematical community.
\end{remark}

\section{Non-linearizable torus actions}\label{sec6}

Examples of non-linearizable torus actions, as well as a method of
studying them, were developed by Asanuma \cite{Asanuma}. It is not
difficult to observe that most of Asanuma's technique of proof  can
be carried to the free associative case without loss. As in
Asanuma's case, the existence of non-linearizable torus actions is
tied to the existence of so-called non-rectifiable ideals in the
appropriate algebras. This section establishes that fact and
discusses some consequences. We will be brief, referring the reader
to the original work \cite{Asanuma} for a more thorough exposition.
One rather remarkable feature of Asanuma's technique is the fact
that, modulo minor details and modifications, it may be repeated
almost verbatim in the associative setting -- a situation similar to
the one we have observed in the Bia\l{}ynicki-Birula's theorem on
the action of the maximal torus. This circumstance is curious, as one might not expect the existence of such a linearizable mapping in general, in light of the negative answer to the automorphism lifting problem
provided in \cite{KBYu}.

\medskip

Let $A$, $B$, $C$, $D$ be (associative) algebras over  a field
$\mathbb{K}$. Take ideals $I\triangleleft A$ and $J \triangleleft
B$.

\begin{Def}\label{defequiv}
The ideal $I$ is  \textbf{equivalent} to $J$ if there exists an
algebra isomorphism $\sigma: A\rightarrow B$ such that
$\sigma(I)=J$. The equivalence relation is denoted by $I\sim J$.
\end{Def}
\begin{Def}\label{defrect}
If $I$ is equivalent to the ideal $\langle z_{\ell+1},\ldots,
z_n\rangle$ of the free algebra $\mathbb{K}\langle z_{1},\ldots,
z_n\rangle$ for some $\ell\leq n$ ($\langle z_{\ell+1},\ldots,
z_n\rangle =0$ when $\ell=n$), then the ideal $I$ is said to be
\textbf{rectifiable}.
\end{Def}
\begin{Def}\label{defrecthom}
An algebra homomorphism $\alpha: A\rightarrow C$ is equivalent to a homomorphism $\beta: B\rightarrow D$, if there exist algebra isomorphisms
 $\gamma$ and~$\delta$ that together with $\alpha$ and~$\beta$ form a commutative diagram:
$$
\begin{tikzcd}
A \arrow{r}{\gamma} \arrow{d}{\alpha} & B\arrow{d}{\beta}\\
C\arrow{r}{\delta} & D
\end{tikzcd}
$$
In particular, if $\alpha$ is equivalent to the projection
$$
\beta:\mathbb{K}\langle z_1,\ldots, z_n\rangle\rightarrow
\mathbb{K}\langle z_1,\ldots, z_{\ell}\rangle
$$
defined by $\beta(z_i)=z_i$ for $i=1,\ldots, \ell$ and
$\beta(z_i)=0$ otherwise, then we call $\alpha$
\textbf{rectifiable}.
\end{Def}
\begin{remark}\label{remker}
Note that if $\alpha \sim \beta$, then the ideals $\Ker\alpha$ and
$\Ker\beta$ are equivalent.
\end{remark}

\begin{lem}\label{lemequiv}
Let  $\alpha,\; \gamma: \mathbb{K}\langle
z_1,\ldots,z_n,y_1,\ldots,y_\ell\rangle\rightarrow C$ be two
surjective $\mathbb{K}$-algebra homomorphisms. If $\alpha(y_j)=0$
($j=1,\ldots, \ell$) and $\gamma(z_i)=0$ ($i=1,\ldots, n$), then
there exist polynomials $f_j\in \mathbb{K}\langle
z_1,\ldots,z_n\rangle$ ($j=1,\ldots, \ell$) and $g_i\in
\mathbb{K}\langle y_1,\ldots,y_\ell\rangle$ ($i=1,\ldots, n$) such
that
$$
\alpha(z_i)=\gamma(g_i),\;\;\alpha(f_j)=\gamma(y_j)
$$

for $i=1,\ldots,n$ and $j=1,\ldots, \ell$. If we set
$\tau=\tau_1\circ\tau_2$ ($\tau_i \in\Aut\mathbb{K}\langle
z_1,\ldots,z_n,y_1,\ldots,y_\ell\rangle$) where

$$
\tau_1: (z_1,\ldots,z_n,\, y_1,\ldots,y_\ell)\mapsto
(z_1+g_1,\ldots,z_n+g_n,\, y_1,\ldots,y_\ell)
$$
and
$$
\tau_2: (z_1,\ldots,z_n,\,,
y_1,\ldots,y_\ell)\mapsto(z_1,\ldots,z_n,\,
y_1-f_1,\ldots,y_\ell-f_\ell)
$$
then
$$
\alpha = \gamma\circ\tau.
$$
In particular, $\alpha$ is equivalent to $\gamma$.
\end{lem}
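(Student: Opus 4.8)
The plan is to obtain the polynomials $f_j$ and $g_i$ directly from surjectivity, and then to verify the identity $\alpha=\gamma\circ\tau$ by a computation on generators. First I would observe that the hypotheses $\alpha(y_j)=0$ and $\gamma(x_i)=0$ let each homomorphism be recovered from its restriction to one half of the generating set. Since $\alpha$ kills every $y_j$, any word involving a $y$ maps to a product containing the factor $\alpha(y_j)=0$; hence the image of $\alpha$ coincides with the image of its restriction $\alpha|_{\mathbb{K}\langle x_1,\ldots,x_n\rangle}$, and as $\alpha$ is surjective, this restriction $\mathbb{K}\langle x_1,\ldots,x_n\rangle\to C$ is already surjective. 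Symmetrically $\gamma|_{\mathbb{K}\langle y_1,\ldots,y_m\rangle}\to C$ is surjective. Consequently, for each $i$ the element $\alpha(x_i)\in C$ lies in the image of $\gamma|_{\mathbb{K}\langle y\rangle}$, giving a $g_i\in\mathbb{K}\langle y_1,\ldots,y_m\rangle$ with $\gamma(g_i)=\alpha(x_i)$; and for each $j$ the element $\gamma(y_j)\in C$ lies in the image of $\alpha|_{\mathbb{K}\langle x\rangle}$, giving an $f_j\in\mathbb{K}\langle x_1,\ldots,x_n\rangle$ with $\alpha(f_j)=\gamma(y_j)$. This produces the required polynomials.

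Next I would confirm that $\tau_1$ and $\tau_2$ are genuine automorphisms. Each is triangular (elementary): $\tau_1$ alters only the $x$-generators by adding $g_i\in\mathbb{K}\langle y\rangle$ while fixing the $y$'s, and $\tau_2$ alters only the $y$-generators by subtracting $f_j\in\mathbb{K}\langle x\rangle$ while fixing the $x$'s. Their evident inverses are $x_i\mapsto x_i-g_i$ and $y_j\mapsto y_j+f_j$, so each lies in $\Aut\mathbb{K}\langle x_1,\ldots,x_n,y_1,\ldots,y_m\rangle$, and therefore so does $\tau=\tau_1\circ\tau_2$.

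Finally, to show $\alpha=\gamma\circ\tau$ it suffices to compare the two homomorphisms on generators. Computing $\tau$ (with the convention $\tau(w)=\tau_1(\tau_2(w))$) gives $\tau(x_i)=x_i+g_i$ and $\tau(y_j)=y_j-f_j(x_1+g_1,\ldots,x_n+g_n)$. Applying $\gamma$ and using $\gamma(x_i)=0$ yields $\gamma(\tau(x_i))=\gamma(g_i)=\alpha(x_i)$. Moreover $\gamma(x_i+g_i)=\gamma(g_i)=\alpha(x_i)$, so multiplicativity of $\gamma$ gives $\gamma\bigl(f_j(x_1+g_1,\ldots,x_n+g_n)\bigr)=f_j(\alpha(x_1),\ldots,\alpha(x_n))=\alpha(f_j)=\gamma(y_j)$, whence $\gamma(\tau(y_j))=\gamma(y_j)-\gamma(y_j)=0=\alpha(y_j)$. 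Thus $\gamma\circ\tau$ and $\alpha$ agree on all generators, so $\alpha=\gamma\circ\tau$, exhibiting $\alpha$ and $\gamma$ as equivalent.

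The only genuine subtlety, and the step I would watch most carefully, is the bookkeeping of the composition order: the identity comes out cleanly precisely because one applies $\tau_2$ first, so that $f_j$ is evaluated at the shifted arguments $x_i+g_i$, and because $\gamma$ annihilates the $x$-generators, which collapses $\gamma(x_i+g_i)$ to $\alpha(x_i)$. With the opposite convention the constant terms of the $f_j$ would intrude through $\gamma(f_j)=f_j(0)$, so I would fix $\tau(w)=\tau_1(\tau_2(w))$ at the outset and track the substitution $x_i\mapsto x_i+g_i$ inside $f_j$ throughout.
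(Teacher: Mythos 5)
Your proposal is correct and follows essentially the same route as the paper's proof: obtain $f_j$ and $g_i$ from the fact that the restrictions $\alpha|_{\mathbb{K}\langle x\rangle}$ and $\gamma|_{\mathbb{K}\langle y\rangle}$ are already surjective, then verify $\alpha=\gamma\circ\tau$ on generators, using $\gamma(x_i)=0$ and the evaluation $\gamma\bigl(f_j(x_1+g_1,\ldots,x_n+g_n)\bigr)=\alpha(f_j)$. Your added details — the explicit check that $\tau_1,\tau_2$ are automorphisms and the care about applying $\tau_2$ first — are sound refinements of the same argument, not a different method.
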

\begin{proof}
Since the images of $\alpha$ and $\gamma$ coincide,  the polynomials
$g_i$ and $f_j$ exist. As
$$
\gamma\circ\tau_1 : (z_1,\ldots,z_n,y_1,\ldots,y_\ell)\mapsto
(\alpha(z_1),\ldots,\alpha(z_n),\gamma(y_1),\ldots,\gamma(y_\ell)),
$$
it follows that
$$
\gamma\circ\tau_1\circ\tau_2(z_i) =
\gamma\circ\tau_1(z_i)=\alpha(z_i)
$$
and
$$
\gamma\circ\tau_1\circ\tau_2(y_j) = \gamma(y_j)-\alpha(f_j)=0.
$$
Therefore $\gamma\circ\tau=\alpha$.
\end{proof}

\begin{cor}\label{corequivhom}
Let $C$ be a $\mathbb{K}$-algebra generated over $\mathbb{K}$ by
$\ell$ elements and let
$$
\alpha,\;\beta:\mathbb{K}\langle z_1,\ldots, z_n,y_1,\ldots
y_\ell\rangle\rightarrow C
$$
be surjective with
$$
\alpha(y_j)=\beta(y_j)=0
$$
for all $j$. Then $\alpha$ is equivalent to $\beta$. In particular,
if $C = \mathbb{K}\langle y_1,\ldots y_\ell\rangle$, then $\alpha$
and~$\beta$ are both rectifiable.
\end{cor}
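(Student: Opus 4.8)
The plan is to reduce the statement directly to Lemma \ref{lemequiv} by constructing, out of $\beta$, an auxiliary homomorphism that kills the $x_i$ rather than the $y_j$, so that the hypotheses of the lemma are met. First I would observe that since $C$ is generated by $m$ elements, I may pick generators $c_1,\ldots,c_m$ of $C$ and define a surjective homomorphism
$$
\gamma:\mathbb{K}\langle x_1,\ldots,x_n,y_1,\ldots,y_m\rangle\rightarrow C
$$
by setting $\gamma(y_j)=c_j$ and $\gamma(x_i)=0$ for all $i$. The point of this $\gamma$ is that it vanishes on the $x_i$ while $\alpha$ vanishes on the $y_j$, which is precisely the orthogonality condition that Lemma \ref{lemequiv} requires of its two input homomorphisms.

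Next I would apply Lemma \ref{lemequiv} to the pair $(\alpha,\gamma)$. Since both $\alpha$ and $\gamma$ are surjective with $\alpha(y_j)=0$ and $\gamma(x_i)=0$, the lemma yields an automorphism $\tau\in\Aut\mathbb{K}\langle x_1,\ldots,x_n,y_1,\ldots,y_m\rangle$ with $\alpha=\gamma\circ\tau$; in particular $\alpha$ is equivalent to $\gamma$. I would then run the identical argument on the pair $(\beta,\gamma)$: the hypotheses are symmetric in $\alpha$ and $\beta$, so $\beta(y_j)=0$ and $\gamma(x_i)=0$ again put us in the situation of Lemma \ref{lemequiv}, producing an automorphism $\tau'$ with $\beta=\gamma\circ\tau'$, whence $\beta$ is equivalent to $\gamma$. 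Composing the two equivalences (using that the relation of Definition \ref{defrecthom} is transitive, via $\tau\circ(\tau')^{-1}$) gives $\alpha=\beta\circ\bigl((\tau')^{-1}\circ\tau\bigr)$, so $\alpha$ is equivalent to $\beta$.

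For the final clause, suppose $C=\mathbb{K}\langle y_1,\ldots,y_m\rangle$. Then the natural choice of generators is $c_j=y_j$, and the canonical projection
$$
\beta_0:\mathbb{K}\langle x_1,\ldots,x_n,y_1,\ldots,y_m\rangle\rightarrow\mathbb{K}\langle y_1,\ldots,y_m\rangle
$$
sending $x_i\mapsto 0$ and $y_j\mapsto y_j$ is exactly the rectifying projection appearing in Definition \ref{defrecthom} (after reindexing so that the surviving variables are the $y_j$). This $\beta_0$ satisfies $\beta_0(x_i)=0$, which is the $x$-vanishing hypothesis, so by the first part any surjective $\alpha$ with $\alpha(y_j)=0$ is equivalent to $\beta_0$, i.e. rectifiable; the same applies to $\beta$. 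I expect the only genuine subtlety to be bookkeeping: one must check that the equivalence of Definition \ref{defrecthom} is honestly transitive and that the roles of the $x$- and $y$-blocks line up with the target's generating set when invoking Lemma \ref{lemequiv}, since the lemma is stated asymmetrically (it annihilates the $y_j$ on one side and the $x_i$ on the other). Everything else is a formal consequence of the lemma and the definitions.
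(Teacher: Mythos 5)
Your proposal is correct and follows essentially the same route as the paper: both construct the auxiliary surjection $\gamma$ killing the $x_i$ and sending the $y_j$ to a generating set of $C$, then apply Lemma \ref{lemequiv} to the pairs $(\alpha,\gamma)$ and $(\beta,\gamma)$ and compose the resulting equivalences. Your extra care about transitivity and the reindexing in the rectifiability clause only makes explicit what the paper leaves implicit.
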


\begin{proof}

The algebra $C$ is of the form $\mathbb{K}\langle z_1,\ldots,
z_\ell\rangle$ with $z_j\in\alpha(\mathbb{K}\langle z_1,\ldots,
z_n\rangle)$. Define the homomorphism
$$
\gamma: \mathbb{K}\langle z_1,\ldots, z_n,y_1,\ldots
y_\ell\rangle\rightarrow C
$$
by
$$
\gamma(z_i)=0,\;\;\gamma(y_j)=z_j.
$$
Then $\gamma$ is surjective. By Lemma \ref{lemequiv}, both $\alpha$
and $\beta$ are equivalent to $\gamma$, which implies that $\alpha$
is equivalent to $\beta$.
\end{proof}

\begin{cor}\label{corequivideal}
Let $I$ and $J$ be ideals of the free algebra ${F_n}=\mathbb{K}\langle z_1,\ldots,
z_n\rangle$ such that
$$
{F_n}/I\simeq_{\mathbb{K}} {F_n}/J.
$$
If ${F_n}/I$ is generated over $\mathbb{K}$ by $\ell$ elements as a
$\mathbb{K}$-algebra, then
$$
\langle I, y_1,\ldots, y_\ell\rangle\sim \langle J, y_1,\ldots,
y_\ell\rangle
$$
as ideals of $\mathbb{K}\langle z_1,\ldots, z_n,y_1,\ldots
y_\ell\rangle$. In particular, if ${F_n}/I\simeq \mathbb{K}\langle
z_1,\ldots z_\ell\rangle$, then both $\langle I, y_1,\ldots,
y_\ell\rangle$ and $\langle J, y_1,\ldots, y_\ell\rangle$ are
rectifiable.
\end{cor}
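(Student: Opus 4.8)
The plan is to realize both ideals as kernels of suitable surjections onto the common quotient $C := A/I \simeq_{\mathbb{K}} A/J$, and then invoke Corollary \ref{corequivhom} together with Remark \ref{remker}. Write $\pi_I\colon A\to A/I$ and $\pi_J\colon A\to A/J$ for the canonical projections, fix a $\mathbb{K}$-algebra isomorphism $\psi\colon A/J \to A/I = C$, and identify the generators $z_i$ of $A$ with the variables $x_i$ inside the larger free algebra $\mathbb{K}\langle x_1,\ldots,x_n,y_1,\ldots,y_m\rangle$.

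First I would introduce two homomorphisms $\alpha,\beta\colon \mathbb{K}\langle x_1,\ldots,x_n,y_1,\ldots,y_m\rangle\to C$, defined on generators by $\alpha(x_i)=\pi_I(z_i)$, $\beta(x_i)=\psi(\pi_J(z_i))$, and $\alpha(y_j)=\beta(y_j)=0$ for all $j$. Each factors through the projection $p\colon \mathbb{K}\langle x,y\rangle\to A$ that kills the $y_j$ and sends $x_i\mapsto z_i$, followed by $\pi_I$ (respectively $\psi\circ\pi_J$); since $\pi_I,\pi_J$ are surjective and $\psi$ is an isomorphism, both $\alpha$ and $\beta$ are surjective. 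The key computation is the identification of kernels: the preimage under $p$ of the ideal $I$ is exactly $\langle I,y_1,\ldots,y_m\rangle$, so $\Ker\alpha=\langle I,y_1,\ldots,y_m\rangle$; and because $\psi$ is injective it does not enlarge the kernel, whence $\Ker\beta=p^{-1}(\Ker\pi_J)=\langle J,y_1,\ldots,y_m\rangle$.

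With this set up, the hypotheses of Corollary \ref{corequivhom} are met verbatim: $C=A/I$ is generated over $\mathbb{K}$ by $m$ elements by assumption (which is precisely what permits the use of the $m$ auxiliary variables $y_j$), and $\alpha,\beta$ are surjective with $\alpha(y_j)=\beta(y_j)=0$. The corollary then yields that $\alpha$ is equivalent to $\beta$, and Remark \ref{remker} transfers this to an equivalence of kernels, giving $\langle I,y_1,\ldots,y_m\rangle\sim\langle J,y_1,\ldots,y_m\rangle$, as asserted. For the final "in particular" statement I would take $C=\mathbb{K}\langle z_1,\ldots,z_m\rangle$; the last sentence of Corollary \ref{corequivhom} now additionally guarantees that $\alpha$ and $\beta$ are rectifiable homomorphisms, so applying Remark \ref{remker} to the equivalence witnessing rectifiability shows that each kernel is equivalent to the kernel of the coordinate projection, namely $\langle z_{m+1},\ldots,z_{n+m}\rangle$. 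By Definition \ref{defrect} this means $\langle I,y_1,\ldots,y_m\rangle$ and $\langle J,y_1,\ldots,y_m\rangle$ are rectifiable ideals.

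The main obstacle—really the only point demanding care—is the precise identification of the two kernels with $\langle I,y_1,\ldots,y_m\rangle$ and $\langle J,y_1,\ldots,y_m\rangle$, together with the bookkeeping that matches the number of $y$-variables to the number of generators of $C$. Once the factorizations through $A$ are written down, the equivalence of ideals follows formally from the already-established Corollary \ref{corequivhom} and Remark \ref{remker}, with no further combinatorial input required.
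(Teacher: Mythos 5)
Your proposal is correct and follows exactly the route the paper intends: the paper's own proof is the single line ``Immediately follows from Corollary \ref{corequivhom}'', and your argument is precisely the natural expansion of that line --- building the two surjections onto $C=A/I$ killing the $y_j$, identifying their kernels with $\langle I,y_1,\ldots,y_m\rangle$ and $\langle J,y_1,\ldots,y_m\rangle$, and then applying Corollary \ref{corequivhom} together with Remark \ref{remker}. The kernel identification and the rectifiability bookkeeping you spell out are exactly the details the paper leaves implicit.
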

\begin{proof}
 Follows immediately from Corollary \ref{corequivhom}.
\end{proof}

\medskip
\begin{Def}\label{defgraded}
Given an associative $\mathbb{K}$-algebra $A$ and a commutative
monoid $M$, the algebra $A$ is called $M$-graded if it can be
represented as a direct sum
$$
A = \bigoplus_{m\in M}\Gamma_m
$$
of $\mathbb{K}$-modules, such that $\Gamma_{m_1}\Gamma_{m_2}\subseteq \Gamma_{m_1+m_2}$. The map
$$
\Gamma: M\rightarrow \lbrace \Gamma_m\;|\;m\in M\rbrace
$$
is called the $M$-grading of $A$.
\end{Def}
\smallskip

\begin{Def}\label{defrees}
Given an associative $\mathbb{K}$-algebra $A$ and a two-sided ideal
$I$, the (extended) Rees algebra is
$$
\mathcal{R}_A(I) =
A[t,t^{-1};I]=\bigoplus_{n=-\infty}^{+\infty}I^nt^n.
$$ \end{Def}
The Rees algebra is a $\mathbb{Z}$-graded  $\mathbb{K}$-algebra
(according to powers of $t$), as well a $\mathbb{K}[t]$-algebra.

\begin{prop}\label{proprees}
If $\Gamma$ and $\Delta$ denote the Rees algebras 
$\mathcal{R}_A(I)$ and $\mathcal{R}_B(J)$, then the following are
equivalent:

1. $\Gamma\simeq_{\mathbb{K}}\Delta$.

2. $\Gamma\simeq_{\mathbb{K}[t]}\Delta$.

3. $I\sim J$.

(The isomorphisms are graded.)
\end{prop}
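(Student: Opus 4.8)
The plan is to prove the cycle of implications $(2)\Rightarrow(1)\Rightarrow(3)\Rightarrow(2)$, the first link being immediate. Throughout I write $C_n$ for the degree-$n$ homogeneous component of $\mathcal{R}_A(I)$ and $D_n$ for that of $\mathcal{R}_B(J)$, so that by Definition \ref{defrees} one has $C_n=I^nt^n$ with the convention $I^n=A$ for $n\le 0$; in particular $C_0=A$, $C_1=It$, $C_{-1}=At^{-1}$, and similarly $D_0=B$, $D_1=Jt$, $D_{-1}=Bt^{-1}$. The implication $(2)\Rightarrow(1)$ is trivial: a graded $\mathbb{K}[t]$-algebra isomorphism is a fortiori a graded $\mathbb{K}$-algebra isomorphism, obtained by forgetting the $\mathbb{K}[t]$-module structure.

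For $(3)\Rightarrow(2)$, suppose $I\sim J$, witnessed in the sense of Definition \ref{defequiv} by an algebra isomorphism $\sigma:A\to B$ with $\sigma(I)=J$. Since $\sigma$ is multiplicative, $\sigma(I^k)=\sigma(I)^k=J^k$ for every $k$. I would define $\tilde\sigma:\mathcal{R}_A(I)\to\mathcal{R}_B(J)$ by $\tilde\sigma(at^n)=\sigma(a)t^n$ and verify directly that it carries $C_n=I^nt^n$ bijectively onto $D_n=J^nt^n$, fixes $t$, and is multiplicative on homogeneous elements via $\tilde\sigma\big((at^m)(bt^n)\big)=\sigma(ab)t^{m+n}$. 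Hence $\tilde\sigma$ is a graded $\mathbb{K}[t]$-algebra isomorphism, which is (2). The centrality of $t$ in the ambient Laurent ring $A[t,t^{-1}]$ is exactly what keeps this formula multiplicative in the associative setting.

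The heart of the argument is $(1)\Rightarrow(3)$, and the main obstacle is to recover the ideal $I$ intrinsically from the graded algebra structure alone. The key observation is that $I$ is realized inside the degree-zero part as the product of the two adjacent components:
$$
C_1\,C_{-1}\;=\;I\,A\;=\;I\;\subseteq\;A\;=\;C_0,
$$
where the first equality uses $(at)(bt^{-1})=ab$ (for $a\in I$, $b\in A$) and the second uses that $I$ is a right ideal with $I=I\cdot 1$; symmetrically $C_{-1}C_1=AI=I$. Now let $\Psi:\mathcal{R}_A(I)\to\mathcal{R}_B(J)$ be a graded $\mathbb{K}$-algebra isomorphism as in (1). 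Being graded, $\Psi$ restricts to an isomorphism of degree-zero parts $\Psi_0:C_0\to D_0$, i.e. an algebra isomorphism $A\to B$, and it carries $\{C_1,C_{-1}\}$ onto $\{D_1,D_{-1}\}$. Consequently $\Psi_0(I)=\Psi(C_1)\,\Psi(C_{-1})=D_1D_{-1}=J$, so $\Psi_0$ exhibits $I\sim J$ and closes the cycle.

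I expect the only genuinely delicate point to be this intrinsic identification $I=C_1C_{-1}$, together with the remark that the transported product $\Psi(C_1)\Psi(C_{-1})$ recovers $J$ regardless of whether one insists the grading be preserved on the nose or allows a sign reversal (in which case $\Psi(C_1)\Psi(C_{-1})=D_{-1}D_1=J$ instead). The degenerate cases should be recorded but cause no trouble: for instance if $I=0$ then $C_n=0$ for $n>0$, so condition (1) already forces $D_n=0$ for $n>0$, hence $J=0$, consistent with $(3)$.
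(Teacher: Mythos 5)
Your proposal is correct and follows essentially the same route as the paper: the same cycle of implications, with $(3)\Rightarrow(2)$ by extending the isomorphism $A\to B$ to the Laurent ring $A[t,t^{-1}]$ and restricting, and $(1)\Rightarrow(3)$ by restricting the graded isomorphism to degree zero and recovering the ideal as the product of the degree $\pm 1$ components (the paper writes this as $\sigma_{|A}(I)=\sigma(tAt^{-1}I)=tBt^{-1}J=J$). The only cosmetic difference is that you read the Rees grading with $C_1=It$, $C_{-1}=At^{-1}$, the mirror image (under $t\mapsto t^{-1}$) of the paper's convention $\Gamma_1=tA$, $\Gamma_{-1}=t^{-1}I$, which changes nothing in substance.
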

\begin{proof}
$(1)\Rightarrow (3)$: Suppose there is a graded $\mathbb{K}$-isomorphism
$$
\sigma: \mathcal{R}_A(I)\rightarrow\mathcal{R}_B(J).
$$
Then, in particular,
$$
\sigma(t^nA) = \sigma(\Gamma_n)=\Delta_n=t^nB
$$
when $n=0$ or $n=1$, and
$$
\sigma(t^{-1}I) = \sigma(\Gamma_{-1}) = \Delta_{-1} = t^{-1}J.
$$
Therefore
$$
\sigma_{|A}:A\rightarrow B
$$
is a $\mathbb{K}$-isomorphism, such that
$$
\sigma_{|A}(I) = \sigma(tAt^{-1}I) = tBt^{-1}J = J,
$$
which realizes the equivalence.

\smallskip

$(3)\Rightarrow (2)$: if $I\sim J$, then there is a $\mathbb{K}$-isomorphism
$$
\theta: A\rightarrow B
$$
such that $\theta(I) = J$. The map $\theta$ extends uniquely to
$$
\theta':A[t,t^{-1}]\rightarrow B[t,t^{-1}],
$$
whose restriction to the Rees algebra (which is a subalgebra of the algebra of Laurent polynomials) furnishes the required $\mathbb{Z}$-graded $\mathbb{K}$-isomorphism.

\smallskip

$(2)\Rightarrow (1)$ is immediate.
\end{proof}

\begin{prop}\label{propreesequiv}
Let $F_n=\mathbb{K}\langle z_1,\ldots, z_n\rangle$ be the free
algebra, and let $I$ and $J$ be two (two-sided) ideals of $F_n$, such
that
$$
F_n/I\simeq F_n/J.
$$
If $F_n/I$ is generated by $\ell$ elements over $\mathbb{K}$, then
$$
\mathcal{R}_{F_n}(I)\langle y_1,\ldots, y_\ell\rangle
\simeq_{\mathbb{K}[t]} \mathcal{R}_{F_n}(J)\langle y_1,\ldots,
y_\ell\rangle.
$$
In particular, if $F_n/I\simeq \mathbb{K}\langle y_1,\ldots,
y_\ell\rangle$, then
$$
\mathcal{R}_{F_n}(I)\langle y_1,\ldots,
y_\ell\rangle\simeq_{\mathbb{K}[t]} \mathbb{K}[t]\langle z_1,
\ldots, z_n, y_1,\ldots, y_\ell\rangle.
$$

\end{prop}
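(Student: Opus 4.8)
The plan is to deduce both assertions from the two tools already assembled: the ideal-stabilization statement of Corollary \ref{corequivideal} and the Rees-algebra dictionary of Proposition \ref{proprees}. Throughout, write $\tilde A = \mathbb{K}\langle x_1,\ldots,x_n,y_1,\ldots,y_m\rangle$, and for a two-sided ideal $I\subseteq A$ let $\tilde I=\langle I,y_1,\ldots,y_m\rangle$ be its stabilization in $\tilde A$. The first step I would carry out is to record the identification
$$
\mathcal{R}_A(I)\langle y_1,\ldots,y_m\rangle \;\simeq_{\mathbb{K}[t]}\; \mathcal{R}_{\tilde A}(\tilde I),
$$
under which the adjoined free generators $y_j$ correspond to the degree $-1$ elements $t^{-1}y_j$ of the extended Rees algebra (recall $\Gamma_{-1}=t^{-1}I$ and $\Gamma_1 = At$ from the proof of Proposition \ref{proprees}). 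I would produce the graded $\mathbb{K}[t]$-homomorphism $\Psi$ which restricts on $\mathcal{R}_A(I)$ to the inclusion $\mathcal{R}_A(I)\hookrightarrow\mathcal{R}_{\tilde A}(\tilde I)$ induced by $A\hookrightarrow\tilde A$ (legitimate since $I\subseteq\tilde I$ and $t\mapsto t$) and which sends $y_j\mapsto t^{-1}y_j$. Surjectivity is direct: the target is generated by $\tilde A$, by $t$, and by $t^{-1}\tilde I$; each $y_j = t\,(t^{-1}y_j)$ lies in the image, and since $t^{-1}$ is central every generator $t^{-1}w$ of $t^{-1}\tilde I$, with $w = a g b$ and $g\in I$ or $g=y_j$, equals $a\,(t^{-1}g)\,b$ with $t^{-1}g$ coming from $t^{-1}I\subseteq\mathcal{R}_A(I)$ or from $t^{-1}y_j$.

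With this identification in hand the first assertion is immediate. The hypotheses $A/I\simeq A/J$ and ``$A/I$ generated by $m$ elements'' are precisely those of Corollary \ref{corequivideal}, which delivers the equivalence of stabilized ideals $\tilde I\sim\tilde J$ in $\tilde A$. Proposition \ref{proprees} then upgrades this equivalence to a graded $\mathbb{K}[t]$-isomorphism $\mathcal{R}_{\tilde A}(\tilde I)\simeq_{\mathbb{K}[t]}\mathcal{R}_{\tilde A}(\tilde J)$, and the displayed identification rewrites the two sides as $\mathcal{R}_A(I)\langle y\rangle$ and $\mathcal{R}_A(J)\langle y\rangle$, which is the claim.

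For the special case $A/I\simeq\mathbb{K}\langle y_1,\ldots,y_m\rangle$ I would reduce to one convenient comparison ideal and a free-algebra computation. Take $J=\langle x_{m+1},\ldots,x_n\rangle$, so that $A/J\simeq\mathbb{K}\langle x_1,\ldots,x_m\rangle\simeq A/I$; by the first part it then suffices to evaluate $\mathcal{R}_A(J)\langle y\rangle$. Through the identification above this equals $\mathcal{R}_{\tilde A}(\mathfrak c)$ for the coordinate ideal $\mathfrak c=\langle x_{m+1},\ldots,x_n,y_1,\ldots,y_m\rangle$ generated by $n$ of the $n+m$ free generators of $\tilde A$. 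For such a coordinate ideal the extended Rees algebra is free over $\mathbb{K}[t]$: it is generated by the complementary generators $x_1,\ldots,x_m$ in degree $0$ together with $t^{-1}x_{m+1},\ldots,t^{-1}x_n,t^{-1}y_1,\ldots,t^{-1}y_m$ in degree $-1$, and one verifies these generate it freely (for instance, by embedding into $\tilde A[t,t^{-1}]$ and observing that distinct words map to distinct words up to a central power of $t$). Counting $m+(n-m)+m=n+m$ free generators identifies it with $\mathbb{K}[t]\langle x_1,\ldots,x_n,y_1,\ldots,y_m\rangle$, as required.

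The main obstacle I anticipate is the faithfulness half of the first step, i.e. that the free product $\mathcal{R}_A(I)\langle y\rangle$ genuinely injects into $\mathcal{R}_{\tilde A}(\tilde I)$ rather than merely surjecting onto it. Because $\mathcal{R}_A(I)$ is the Rees algebra of an arbitrary ideal and is not itself free, the naive word comparison available in the coordinate case does not apply. Instead I would post-compose with the faithful embedding $\mathcal{R}_{\tilde A}(\tilde I)\hookrightarrow\tilde A[t,t^{-1}]$ and exploit that $\tilde A[t,t^{-1}]$ is the free product of $A[t,t^{-1}]$ and $\mathbb{K}[t,t^{-1}]\langle y\rangle$ over $\mathbb{K}[t,t^{-1}]$. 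Centrality of $t^{\pm1}$ turns an alternating word $r_0 y_{j_1} r_1\cdots y_{j_k} r_k$ with $r_i\in\mathcal{R}_A(I)$ into $t^{-k}$ times the corresponding alternating word there, whereupon the normal-form theorem for free products, combined with the already-established injectivity $\mathcal{R}_A(I)\hookrightarrow A[t,t^{-1}]$, yields faithfulness; alternatively, a graded dimension count in each $t$-degree achieves the same end, the $\mathbb{Z}$-grading being compatible on both sides.
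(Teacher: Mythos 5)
Your proof is correct and takes essentially the same route as the paper's: pass to $B=A\langle y_1,\ldots,y_m\rangle$ with the stabilized ideals $I'=\langle I,y_1,\ldots,y_m\rangle$ and $J'=\langle J,y_1,\ldots,y_m\rangle$, apply Corollary~\ref{corequivideal} and Proposition~\ref{proprees}, identify $\mathcal{R}_{B}(I')$ with $\mathcal{R}_A(I)\langle y_1,\ldots,y_m\rangle$ via $y_j\leftrightarrow t^{-1}y_j$, and take the coordinate ideal $\langle x_{m+1},\ldots,x_n\rangle$ as the comparison ideal for the special case. The only difference is one of detail: the paper simply asserts the key identification (``as $t^{-1}y_1,\ldots,t^{-1}y_m$ are free variables''), whereas you supply the surjectivity and faithfulness arguments for it, as well as the explicit verification that the Rees algebra of a coordinate ideal is free over $\mathbb{K}[t]$.
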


\begin{proof}

Let $B = F_n\langle y_1,\ldots, y_\ell\rangle$, $I' = \langle I,
y_1,\ldots, y_\ell\rangle$ and $J' = \langle J, y_1,\ldots,
y_\ell\rangle$. Then, by Corollary \ref{corequivideal}, $I'\sim J'$,
and by Proposition \ref{proprees}
$$
\mathcal{R}_{B}(I')\simeq_{\mathbb{K}[t]}\mathcal{R}_{B}(J').
$$
But
$$
\mathcal{R}_{B}(I')=\mathcal{R}_{F_n}(I)\langle t^{-1}y_1,\ldots,
t^{-1}y_\ell\rangle
$$
and
$$
\mathcal{R}_{B}(J')=\mathcal{R}_{F_n}(J)\langle t^{-1}y_1,\ldots,
t^{-1}y_\ell\rangle.
$$
As $t^{-1}y_1,\ldots,t^{-1}y_\ell$ are free variables, we must have
$$
\mathcal{R}_{F_n}(I)\langle y_1,\ldots, y_\ell\rangle
\simeq_{\mathbb{K}[t]} \mathcal{R}_{F_n}(J)\langle y_1,\ldots,
y_\ell\rangle.
$$
In particular, when $F_n/I\simeq \mathbb{K}\langle y_1,\ldots,
y_\ell\rangle$, we may take $J = \langle z_{\ell+1},\ldots,
z_n\rangle$, which, together with
$$
\mathcal{R}_{F_n}(J)\langle y_1,\ldots,
y_\ell\rangle\simeq_{\mathbb{K}[t]} \mathbb{K}[t]\langle z_1,
\ldots, z_n, y_1,\ldots, y_\ell\rangle
$$
yields
$$
\mathcal{R}_{F_n}(I)\langle y_1,\ldots,
y_\ell\rangle\simeq_{\mathbb{K}[t]} \mathbb{K}[t]\langle z_1,
\ldots, z_n, y_1,\ldots, y_\ell\rangle.
$$

\end{proof}

\medskip

Any regular action of the $r$-torus $\mathbb{T}_r$ on an associative $\mathbb{K}$-algebra $A$ is equivalent to a homomorphism
$$
\phi: A\rightarrow
A\otimes_{\mathbb{K}}\mathbb{K}[t_1,\ldots,t_r,t_1^{-1},\ldots,t_r^{-1}]=A[t_1,\ldots,t_r,t_1^{-1},\ldots,t_r^{-1}].
$$

Given an element $f\in A$, its image under $\phi$ can be written as
$$
\phi(f) = \sum_{\mathbf m}f_m t_1^{m_1}\ldots t_r^{m_r}
$$
with $\mathbf m=(m_1,\ldots, m_r)\in\mathbb{Z}^r$ corresponding to a character $\mathbf{m}\in\widehat{T}_r$ and $f_{\mathbf m}\in A$. The map $f\mapsto f_{\mathbf m}$ induces a $\mathbb{Z}^r$-grading
$$
\Gamma: \mathbf m\mapsto \Gamma_{\mathbf m} = \lbrace f_{\mathbf m}, \;|\;
f\in A\rbrace
$$
of the algebra $A$.

\begin{Def}\label{defequivactions}
Two (regular) $\mathbb{T}_r$-actions $\phi$ and $\psi$, respectively, on $A$ and $B$ are equivalent, if there exists a $\mathbb{K}$-algebra isomorphism $\sigma: A\rightarrow B$, such that the diagram
$$
\begin{tikzcd}
A[t_1,\ldots,t_r,t_1^{-1},\ldots,t_r^{-1}] \arrow{r}{\sigma\otimes\Id} & B[t_1,\ldots,t_r,t_1^{-1},\ldots,t_r^{-1}]\\
A\arrow{r}{\sigma}\arrow{u}{\phi} & B\arrow{u}{\psi}
\end{tikzcd}
$$
commutes.
\end{Def}

\begin{prop}\label{propequivdeflin}
A regular $\mathbb{T}_r$-action $\phi$ on the free algebra $F_n$ is linearizable
 (in the sense of the previous sections), if and only if it is equivalent, in the sense of Definition \ref{defequivactions}, to a linear action on $F_n$.
 (An action $\psi:F_n\rightarrow F_n[t_1,\ldots,t_r,t_1^{-1},\ldots,t_r^{-1}]$ is  \textbf{linear}
 if the images $\psi(z_i)$ of the generators of $F_n$ are linear in $z_i$).
\end{prop}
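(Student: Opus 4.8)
The plan is to set up the standard dictionary between the coaction
$$
\phi\colon A\rightarrow A[T_1,\ldots,T_r,T_1^{-1},\ldots,T_r^{-1}]
$$
and the geometric action $t\mapsto\sigma(t)\in\Aut A$ used in the earlier sections, and then to show that conjugation of the geometric action by an automorphism $\beta\in\Aut A$ is \emph{exactly} the datum of an equivalence in the sense of Definition \ref{defequivactions}, realized by the isomorphism $\sigma=\beta^{-1}$. First I would record, for each $t\in\mathbb{T}_r$, the evaluation homomorphism $\mathrm{ev}_t\colon A[T,T^{-1}]\to A$, $T_i\mapsto t_i$, and note that $\mathrm{ev}_t\circ\phi=\sigma(t)$ recovers the geometric automorphism. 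Since the ground field is infinite (as in the earlier sections), a Laurent polynomial with coefficients in $A$ is determined by its evaluations, so the coaction and the geometric action carry the same information; in particular, an action is linear on the geometric side (each generator sent to a $\mathbb{K}$-linear combination of generators) if and only if the associated coaction $\psi$ satisfies $\psi(x_i)=\sum_j c_{ij}(T)\,x_j$ with $c_{ij}\in\mathbb{K}[T,T^{-1}]$. The one compatibility I need is $\mathrm{ev}_t\circ(\beta\otimes\Id)=\beta\circ\mathrm{ev}_t$ for every $\beta\in\End A$, which is immediate on generators.

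I would then prove both implications purely at the level of coactions, which sidesteps any bookkeeping about faithfulness of evaluation. For the forward direction, suppose $\phi$ is linearizable, i.e.\ there is $\beta\in\Aut A$ with $\beta^{-1}\circ\sigma(t)\circ\beta$ linear for all $t$. I would put $\sigma=\beta^{-1}$ and $\psi=(\beta^{-1}\otimes\Id)\circ\phi\circ\beta$, observe that $\psi$ is again a coaction (being the conjugate of $\phi$ by an automorphism) whose geometric action is $\beta^{-1}\sigma(t)\beta$, hence $\psi$ is linear in the sense of the Proposition. The square of Definition \ref{defequivactions} then commutes because
$$
(\sigma\otimes\Id)\circ\phi=(\beta^{-1}\otimes\Id)\circ\phi=\psi\circ\beta^{-1}=\psi\circ\sigma,
$$
so $\phi$ is equivalent to the linear action $\psi$.

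For the converse, suppose $\phi$ is equivalent to a linear action $\psi$, the equivalence being realized by $\sigma$; taking $\sigma$ to be an isomorphism I would set $\beta=\sigma^{-1}$ and rewrite $(\sigma\otimes\Id)\circ\phi=\psi\circ\sigma$ as $(\beta^{-1}\otimes\Id)\circ\phi\circ\beta=\psi$, exhibiting $\psi$ as the conjugate of $\phi$ by $\beta\in\Aut A$. Passing to geometric actions via $\mathrm{ev}_t$ gives $\beta^{-1}\sigma(t)\beta=\mathrm{ev}_t\circ\psi$, which is linear, so $\phi$ is linearizable.

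The main obstacle — in fact the only delicate point — is the status of the intertwiner $\sigma$ in Definition \ref{defequivactions}: for the converse to yield an honest linearization one needs $\sigma$ invertible, whereas the definition as written only asks for a homomorphism. I would resolve this by reading \emph{equivalence of actions} as a genuine (symmetric) equivalence, hence realized by an isomorphism, so that the Proposition is essentially a faithful reformulation of the geometric notion of conjugacy. Alternatively, one argues along the lines of Lemma \ref{lem1} that an intertwiner between two effective regular torus actions on $A$ is forced to be invertible, which already upgrades the weaker hypothesis to the needed isomorphism. Everything else reduces to the functoriality of $(-\otimes\Id)$ together with the evaluation maps, and is routine.
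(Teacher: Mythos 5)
Your proof is correct, and it supplies exactly the dictionary the paper takes for granted: the paper's entire proof of this proposition is the single word ``Straightforward,'' so your evaluation-map argument ($\mathrm{ev}_t\circ\phi=\sigma(t)$, conjugation of the coaction by $\beta$ corresponding to conjugation of the geometric action) is precisely the routine verification being waved at, and your observation that Definition \ref{defequivactions} must be read as realized by an \emph{isomorphism} is the right repair of the paper's loose wording. One caution: your fallback suggestion that an intertwiner between two effective regular torus actions is automatically invertible (``along the lines of Lemma \ref{lem1}'') is false --- for instance, the endomorphism of $A$ sending every generator to $0$ intertwines any two linear actions, both of which may be effective --- so the isomorphism reading is not merely convenient but necessary, and it is fortunate that your main argument rests on it rather than on the fallback.
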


\begin{proof}
Straightforward.
\end{proof}

\smallskip

\begin{Def}\label{deffixed}
A two-sided ideal $I$  is   \textbf{fixed} by  an action $\phi$  on
$A$,
 if the image $\phi(I)$ is contained in the ideal
$I[t_1,\ldots,t_r,t_1^{-1},\ldots,t_r^{-1}]$.
\end{Def}
If $I$ is fixed by $\phi$, then $\phi$ induces a canonical $\mathbb{K}$-homomorphism
$$
\overline{\phi}:\gr_I(A)\rightarrow
\gr_I(A)[t_1,\ldots,t_r,t_1^{-1},\ldots,t_r^{-1}]
$$
on the associated graded ring, which therefore defines a torus
action. Then $A/I$ is a subring of $\gr_I(A)$, and we have
$$
\overline{\phi}(A/I)\subset A/I\,
[t_1,\ldots,t_r,t_1^{-1},\ldots,t_r^{-1}].
$$
If $I$ is maximal, then $A/I$ is a simple ring.

The following fact is used by Asanuma.
%By  ``degree''   we mean with
%respect to the induced $\mathbb{Z}^r$-grading, and by ``constant''
%we mean ``contained in the degree zero component''.

\begin{remark} \label{fixed0} It is known, and easy to demonstrate, that if $\mathbb{K}$ is a field then all
invertible elements of $\mathbb{K}[t_1,\ldots,t_r,t_1^{-1},\ldots,t_r^{-1}]$ (where $t_1,\ldots, t_r$ are transcendental variables)
are of the form $at_1^{i_1}\ldots t_r^{i_r}$ with $a\in \mathbb{K}^{\times}$ and $i_1,\ldots, i_r$ integers (the result is true for $R[t,t^{-1}]$ with $R$ a reduced connected commutative ring; broad generalizations to the case $R[G]$ of group algebras for certain types of groups have been obtained in \cite{Neher}). From the characterization of the set of units in $\mathbb{K}[t_1,\ldots,t_r,t_1^{-1},\ldots,t_r^{-1}]$ we obtain the following proposition, which we make use of in the sequel:

-- every subfield of
$\mathbb{K}[t_1,\ldots,t_r,t_1^{-1},\ldots,t_r^{-1}]$ is contained in the degree zero component.
\end{remark}

\begin{prop}\label{factlinassoc1}
If  $A/I$ satisfies a polynomial identity (PI), then $\overline{\phi}(A/I)\subseteq A/I. $
\end{prop}
\begin{proof}
If $A/I$ is a field, then the statement is true by Remark~\ref{fixed0}. In
general, $A/I$ is finite over its center, which must be constant,
but then any algebraic element is constant, implying  $A/I$ is
constant.
\end{proof}

Therefore, for $A/I$ PI, any homogeneous element of $A$ of nonzero
degree (with respect to the $\mathbb{Z}^r$-grading induced by the
torus action $\phi$) is contained in the fixed maximal ideal $I$,
which means in this case that any maximal two-sided ideal $I$ fixed
by $\phi$ is of the form
$$
(I\cap \Gamma_0)\oplus\left(\bigoplus_{m\neq 0}\Gamma_m\right).
$$

\begin{example} 
This argument fails for $A/I$ the Weyl algebra; here $I$ is generated by $z_1 z_2 -1$, and one can define $z_1 \mapsto  z_1t_1$ and $z_2 \mapsto  z_2 t_1^{-1}.$ Then $\phi(A/I)$ is a non-constant image of the Weyl algebra.
\end{example}

 \begin{prop}\label{factlinassoc}
If $\phi$ is a linearizable torus action on the free algebra ${F_n}=\mathbb{K}\langle z_1,\ldots, z_n\rangle$, then there exist a
maximal two-sided ideal $M$ fixed by $\phi$, such that the induced torus action $\overline{\phi}$ on $\gr_M({F_n})$ is equivalent to $\phi$.
\end{prop}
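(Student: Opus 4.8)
The plan is to reduce to the linear model supplied by the hypothesis and then exploit the fact that for the \emph{free} algebra the associated graded with respect to the (transported) augmentation ideal reproduces the algebra together with its action. By Proposition \ref{propequivdeflin}, linearizability of $\phi$ means there is an automorphism $\beta\in\Aut A$ such that the conjugate action $\psi(t)=\beta^{-1}\circ\phi(t)\circ\beta$ is linear; writing the actions as homomorphisms into $A[T_1,\ldots,T_r,T_1^{-1},\ldots,T_r^{-1}]$ this reads $(\beta^{-1}\otimes\Id)\circ\phi=\psi\circ\beta^{-1}$, so $\phi$ and $\psi$ are equivalent in the sense of Definition \ref{defequivactions} via $\sigma=\beta^{-1}$.

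Next I would analyze the linear action $\psi$ directly. Let $M_0=\langle x_1,\ldots,x_n\rangle$ be the augmentation ideal. Since each $\psi(x_i)$ is a $\mathbb{K}[T,T^{-1}]$-linear combination of the $x_j$, all lying in $M_0$, and $\psi$ is an algebra homomorphism, we get $\psi(M_0)\subseteq M_0[T,T^{-1}]$; thus $M_0$ is fixed, and it is maximal because $A/M_0\cong\mathbb{K}$ is simple. Here freeness enters: for the standard total-degree grading $A=\bigoplus_{d\ge 0}A_d$ one has $M_0=\bigoplus_{d\ge 1}A_d$ and $M_0^{k}=\bigoplus_{d\ge k}A_d$, whence $M_0^{k}/M_0^{k+1}\cong A_k$ and $\gr_{M_0}(A)\cong A$ as graded algebras. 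As $\psi$ is linear it preserves this grading, so under the identification $\gr_{M_0}(A)\cong A$ the induced action $\bar\psi$ is carried to $\psi$ itself; in particular $\bar\psi$ on $\gr_{M_0}(A)$ is equivalent to $\psi$ on $A$.

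Then I would transport everything back through $\beta$. Set $M=\beta(M_0)$, a maximal two-sided ideal. From $\phi(\beta(a))=(\beta\otimes\Id)(\psi(a))$ one checks $\phi(M)\subseteq M[T,T^{-1}]$, so $M$ is fixed by $\phi$, and, $\phi$ being a homomorphism, $\phi(M^{k})\subseteq M^{k}[T,T^{-1}]$, so $\bar\phi$ is well defined on $\gr_M(A)$. Since $\beta$ is an automorphism with $\beta(M_0^{k})=M^{k}$, it induces a graded isomorphism $\bar\beta\colon\gr_{M_0}(A)\to\gr_M(A)$, and computing on a class $[a]\in M_0^{k}/M_0^{k+1}$ gives
$$
\bar\phi(\bar\beta([a]))=[\phi(\beta(a))]=[(\beta\otimes\Id)(\psi(a))]=(\bar\beta\otimes\Id)(\bar\psi([a])),
$$
so $\bar\beta$ realizes an equivalence between $\bar\psi$ and $\bar\phi$. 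Chaining the three equivalences $\bar\phi\sim\bar\psi\sim\psi\sim\phi$ shows that $\bar\phi$ on $\gr_M(A)$ is equivalent to $\phi$, as required.

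The main obstacle is the identification $\gr_{M_0}(A)\cong A$ together with the verification that $\bar\psi$ matches the linear action: this is precisely where freeness (and the homogeneity of $M_0$) is genuinely used, since for a general algebra neither the associated graded of the augmentation ideal need recover the algebra nor the induced action reproduce the original. Once this identification and the compatibility of $\beta$ with the two filtrations are in hand, the remaining verifications (fixedness, maximality, and the intertwining diagram chase) are routine.
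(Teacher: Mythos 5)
Your proof is correct and follows essentially the same route as the paper: the paper's (very terse) proof simply takes $M=\langle y_1,\ldots,y_n\rangle$ with $y_i=\beta(x_i)$ the linearizing generators, i.e.\ exactly your $M=\beta(M_0)$, and asserts fixedness and equivalence. You have merely supplied the details the paper omits -- the identification $\gr_{M_0}(A)\cong A$ via homogeneity of the augmentation ideal and the intertwining through $\bar\beta$ -- all of which check out.
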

\begin{proof}
Take $M=\langle y_1,\ldots,y_n\rangle_m$, where $y_1, \ldots, y_n$ is the set of linearized generators. Mapping each $y_i \mapsto 0$ sends each $z_i$ to a constant, so the image ${F_n}/M$ is the field $\mathbb{K},$ implying $M$ is a maximal ideal. But $M$ is obviously fixed by $\phi$, and $\phi$ is equivalent to the induced action.
\end{proof}

\smallskip

\begin{prop}\label{propsecmain}
Let $A$ be an associative $\mathbb{K}$-algebra and let $\phi$ be an action of $\mathbb{T}_r$ on $\mathcal{R}_A(I)\langle y_1, \ldots,
y_m\rangle$ defined by the requirement that  $\phi(f)$ is constant for each $f\in A\langle y_1, \ldots, y_m\rangle$, while $\phi(t)$
has some nonzero degree $\mathbf d$ and, given $s$ and all $i=s+1,\, \ldots,\, m$, the degrees of $y_i$ are not in  $\mathbb{Z}^r \mathbf d$. The following assertions are true:
\begin{enumerate}[label=(\arabic*)]
    \item \label{propsecmain:1}Suppose $B$ is another algebra and $\psi$ is a $\mathbb{T}_r$-action on
$\mathcal{R}_B(J)\langle y_1, \ldots, y_m\rangle$, also satisfying the above hypotheses. Then $\phi$ is equivalent to $\psi$ if and only if the ideals $I\langle y_1,\ldots, y_s\rangle$ and $J\langle
y_1,\ldots, y_s\rangle$ are equivalent.
\item \label{propsecmain:2} Let $A = {F_n} = \mathbb{K}\langle z_1,\ldots, z_n\rangle$ be the free
algebra. Then $\phi$ is linearizable if and only if $I\langle y_1,\ldots, y_s\rangle$ is rectifiable.
\end{enumerate}

\end{prop}

\begin{proof}
From the outset we may suppose $s=1$, since
$$
\mathcal{R}_A(I)\langle y_1, \ldots, y_m\rangle = \mathcal{R}_C(IC)\langle y_{s+1}, \ldots, y_m\rangle
$$
with $C = A\langle y_1,\ldots, y_s\rangle$, and therefore we can descend to $C=A$.
\smallskip

Let $R$ and $S$ denote the algebras $\mathcal{R}_A(I)\langle y_1, \ldots, y_m\rangle$ and $\mathcal{R}_B(J)\langle y_1, \ldots, y_m\rangle$, respectively,
and let $\Gamma$ and $\Delta$ be the gradings induced on $R$ and $S$ by the corresponding torus actions.

To prove \ref{propsecmain:1}, it is enough to demonstrate the direction $(\Rightarrow)$, since $(\Leftarrow)$ is obvious. The equivalence of actions means the existence of a graded $\mathbb{K}$-isomorphism
$\bar{\sigma}$. Denote by $P$ and $Q$ the ideals in $R$ and $S$ generated by $y_1, \ldots, y_m$. The ideals $P$ and $Q$ are fixed,
respectively, by $\phi$ and $\psi$. Then, the $\mathbb{Z}^r$ subgroup assumption on the degrees easily implies that $\bar{\sigma}(P)=Q$. Therefore, $\bar{\sigma}$ descends to $\sigma:\mathcal{R}_A(I)\rightarrow \mathcal{R}_B(J)$, and the diagram
$$
\begin{tikzcd}
R\arrow{r}{\bar{\sigma}}\arrow{d}{\pi_A} & S\arrow{d}{\pi_B}\\
\mathcal{R}_A(I)\arrow{r}{\sigma} & \mathcal{R}_B(J)
\end{tikzcd}
$$
commutes ($\pi_A$ and $\pi_B$ are the natural projections $y_i\mapsto 0$). The map $\sigma$ is a $\mathbb{Z}$-graded (with respect to the Rees grading) $\mathbb{K}$-isomorphism, so \ref{propsecmain:1}
follows from Proposition \ref{proprees}.

\smallskip

To prove  \ref{propsecmain:2}, it  again suffices to demonstrate $(\Rightarrow)$.
Suppose $\phi$ is linearizable. By Proposition \ref{factlinassoc}, there exists a two-sided maximal ideal $M$ of $R$ fixed by $\phi$,
such that $A/M \cong \mathbb{K},$  the induced action $\overline{\phi}$ on the associated graded ring $\gr_M(R)$ is equivalent to $\phi$.
The ideal $M$ is generated by the subset
$$
(A\cap M)\, \cup t^{-1}I \, \cup \, \lbrace t,y_1,\ldots,
y_m\rbrace.
$$
Consider the left module $M/M^2$ over the simple ring $R/M$. It is clear from the proof of Proposition \ref{factlinassoc} that $R/M=\mathbb{K}$. The image of the set $\lbrace t,y_1,\ldots, y_m\rbrace$ under the projection
$$
M\rightarrow M/M^2
$$
is thus a linearly independent system over $\mathbb{K}$. Therefore, there exist $f_i\in A\cap M$ ($i=1,\ldots, u$) and $f_{u+j}\in I$ ($j=1,\ldots, n-u$) such that the set of images under the quotient map,
$$
\lbrace \bar{t},\bar{y}_1,\ldots, \bar{y}_m, \bar{f}_1,\ldots, \bar{f}_u,\overline{t^{-1}f_{u+1}}\ldots,\overline{t^{-1}f_{n}}\rbrace
$$
is a basis of the $\mathbb{K}$-vector space $M/M^2$.

Let ${F_n}=\mathbb{K}\langle z_1,\ldots, z_n\rangle$ be the free algebra and let $J = \langle z_{u+1},\ldots, z_n\rangle$. The
isomorphism
$$
\theta: \gr_R(M)\rightarrow S
$$
by
$$
\theta(\bar{t}) = t,\;\theta(\bar{f}_i) =
z_i,\;\theta(\overline{t^{-1}f_{j}}) = t^{-1}z_j,
\;\theta(\bar{y}_k) = y_k
$$
(where $i=1,\ldots, u$, $j=u+1,\ldots,n$, $k=1,\ldots, m$) induces the action $\psi$ on $S$ which fulfills the conditions of \ref{propsecmain:1}.
Therefore, $I$ is equivalent to $J$ and $I\langle y_1,\ldots, y_s\rangle$ is rectifiable.
\end{proof}

This Proposition \ref{propsecmain}, together with Proposition \ref{propreesequiv},
provides the foundation for Asanuma's counter-examples, in our
analogy, this translates to the following statement.

If there exists a non-rectifiable ideal $I$ of the free algebra
$F_n=\mathbb{K}\langle z_1,\ldots, z_n\rangle$ such that
$F_n/I\simeq_{\mathbb{K}} F_m$, then there are examples of
non-linearizable $\mathbb{T}_r$-actions on free associative
algebras.

\smallskip

If $A = \mathcal{R}_{F_n}(I)\langle y_1,\ldots, y_m\rangle$, then by Proposition \ref{propreesequiv}
 there exists an isomorphism of $\mathbb{K}$-algebras
$$
\beta: A\rightarrow F_n[t]\langle y_1,\ldots, y_m\rangle.
$$
If $\phi$ is a (regular) torus action, define $\delta$ via the
commutative diagram
$$
\begin{tikzcd}
A[t_1,\ldots, t_r,t_1^{-1},\ldots,t_r^{-1}]\arrow{r}{\beta\otimes\Id} & F_n[t][t_1,\ldots, t_r,t_1^{-1},\ldots,t_r^{-1}]\langle y_1,\ldots, y_m\rangle\\
A\arrow{r}{\beta}\arrow{u}{\phi} & F_n[t]\langle y_1,\ldots, y_m\rangle\arrow{u}{\delta}.
\end{tikzcd}
$$
Then $\delta$ is equivalent to $\phi$ and is linearizable if and
only if $I\langle  y_1,\ldots, y_s\rangle$ ($s$ is as in Proposition
\ref{propsecmain}) is rectifiable.

 This can be furthered by
obtaining a class of actions which are linearizable if and only if
$I$ itself is rectifiable, at which point the construction of $I$
reduces to lifting the ideals obtained by Asanuma to the free
algebra.

One can replace $F_n[t]\langle
y_1,\ldots, y_m\rangle $ by $\mathbb{K}\langle u, y_1,\ldots, y_m, z_1, \ldots, z_n\rangle$. Summarizing, we obtain

\begin{thm}

  Let $\mathbb{K}$ be an arbitrary field. If for $n>m$ there exists a non-rectifiable epimorphism $\mathbb{K}\langle z_1,\dots,z_n\rangle\to \mathbb{K}\langle z_1,\dots,z_m\rangle $
  then for $1\leq r\leq m+1$ there is a non-linearizable effective regular $\mathbb{T}_r$-action  
  $$\phi: \mathbb{K}\langle u, y_1,\ldots, y_m, z_1, \ldots, z_n\rangle\to \mathbb{K}[t_1,\ldots, t_r, t_1^{-1},\ldots, t_r^{-1}]\langle u, y_1,\ldots, y_m, z_1, \ldots, z_n\rangle.$$

  %Let $\mathbb{K}$ be an arbitrary field. If there exist a non-rectifiable epimorphism $\mathbb{K}\langle z_1,\dots,z_n\rangle\to \mathbb{K}\langle z_1,\dots,z_m\rangle $
  %then there is a non-linearizable effective regular $\mathbb{G}_m^r$-action  $\mathbb{K}\langle t,z_1,\dots,z_n\rangle\to \mathbb{K}\langle z_1,\dots,z_m\rangle
 % $.

\end{thm}

Moreover, it is clear from our generalization of Asanuma's work that
some of his results on the cancelation problem (Section 8 of
\cite{Asanuma}) also have free associative analogs. These are
formulated as follows.

\begin{Def}\label{pRees}
    Given an associative $\mathbb{K}$-algebra $A$ and a (left) ideal $I$, the (left) \textbf{pseudo Rees algebra} $A\langle t,t^{-1}I\rangle$ is the subalgebra in $A\langle t,t^{-1}\rangle$ generated by $A, t, t^{-1}I$.
\end{Def}

The right Rees pseudoalgebra $A\langle t,It^{-1}\rangle$ is defined similarly, but with a right ideal $I$ and is generated by $A$, $t$, and $It^{-1}$.

\begin{lem} \label{lem:leftreesequiv} 
    If (left) ideals $I$ and $J$ in $A$ are equivalent, then $A\langle t,t^{-1}I\rangle\cong_{\mathbb{K}[t]} A\langle t,t^{-1}J\rangle$.
\end{lem}
    The proof is analogue to the one of Proposition \ref{proprees}.

\begin{lem}\label{lem:leftrees} 
Let $I$ be a (left) ideal of $A$, and consider the coproduct $A*k\langle x_1,\dots,x_n\rangle$. Let $I'=(I,x_1,\dots,x_n)$ be the ideal in $A*k\langle x_1,\dots,x_n\rangle$, then
    $$A\langle t,t^{-1}I\rangle*k\langle x_1,\dots,x_n\rangle\cong_{\mathbb{K}[t]}A\langle t,t^{-1}I'\rangle.$$
\end{lem}

\begin{proof}
    The isomorphism can be provided directly by the substitution $x_i\to t^{-1}x_i$.
\end{proof}

\begin{remark}
    Analogous lemmas to Lemmas \ref{lem:leftreesequiv} and \ref{lem:leftrees} can be proved for the right pseudo Rees algebra.
\end{remark}

\begin{Def}\label{defcinv}
Let $D$ be an integral domain. An (associative) $D$-algebra $A$ is
$D$-invariant, if for any $D$-algebra $B$ such that for some $m$ the
free products
$$
A* \mathbb{K}\langle y_1\rangle * \ldots * \mathbb{K}\langle y_m\rangle\simeq_{D} B* \mathbb{K}\langle y_1\rangle * \ldots * \mathbb{K}\langle y_m\rangle
$$
are isomorphic as $D$-algebras, then $A\simeq_{D} B$.
\end{Def}
The main problem of interest is the free associative analog of the
so-called Cancelation Conjecture, as formulated by Drensky and Yu
\cite{DrYu}:

\begin{conj}[Associative Cancellation Conjecture, \cite{DrYu}]\label{conjdryu}
Let $R$ be a $\mathbb{K}$-algebra. If
$$
R*\mathbb{K}\langle y\rangle\simeq_{\mathbb{K}} \mathbb{K}\langle
z_1,\ldots,z_{n-1}, y\rangle
$$
then
$$
R\simeq_{\mathbb{K}}\mathbb{K}\langle z_1,\ldots, z_{n-1}\rangle.
$$
\end{conj}

Asanuma's results on Rees algebras (and their associative
analogs given in Proposition \ref{propreesequiv}) allow us to
establish a version of the Cancelation Conjecture for coproducts
over a (commutative) $\mathbb{K}$-algebra $D$.

The following statement holds.

\begin{thm}\label{thmcancellation}
%Let $D$ be an integral domain which is a $\mathbb{K}$-algebra, and let $x$ be an indeterminate over $D$. 
Given a nonzero element $t\in \mathbb{K}[x]$ and polynomials $f(x)$ and $g(x)$ in $\mathbb{K}[x]$. Let $A=\mathbb{K}[x]\langle t,t^{-1}f(x)\rangle$ and $B=\mathbb{K}[x]\langle t,t^{-1}g(x)\rangle$ be the left pseudo Rees algebras of $(f(x))$ and $(g(x))$ respectively. 

\begin{enumerate}[label=\arabic*)]
    \item \label{thmcancellation:1}If $\mathbb{K}[x]/(f(x))\simeq_{\mathbb{K}} \mathbb{K}[x]/(g(x))$,
    then
    $$
    A * \mathbb{K}\langle y\rangle\simeq B * \mathbb{K}\langle y\rangle.
    $$
   \item \label{thmcancellation:2} If $f(x)\neq g(ax+b)$ for all $a,b\in \mathbb{K}$, then $A\not\cong B$.
\end{enumerate}
\end{thm}

\begin{proof}
\begin{enumerate}[label=\arabic*)]
    \item Let
$$
\mathbb{K}[x]/(f(x))\simeq_{\mathbb{K}} \mathbb{K}[x]/(g(x)).
$$
The element $t$ is transcendental over $\mathbb{K}$, and therefore
by Lemmas \ref{lem:leftreesequiv} and \ref{lem:leftrees} we have\footnote{Or rather, as a
consequence of Proposition \ref{propreesequiv} when commutation relations are imposed on both sides.}
$$
A*\mathbb{K}\langle y\rangle\simeq B*\mathbb{K}\langle y\rangle.
$$
\item We just need to prove that the abelianization of the pseudo Rees algebras $A$ and $B$ are not isomorphic. In fact, the non-isomorphism of two algebras follows from the fact that their commutator factors are not isomorphic, according to Azanuma \cite{Asanuma}.
\end{enumerate}
\end{proof}

Note that, the left pseudo Rees algebras of $(f(x))$ and $(g(x))$ coproduct with a polynomial ring are isomorphic due to previous Theorem \ref{thmcancellation}, therefore, we have the following result.

\begin{thm}
    There exists two non isomorphic $\mathbb{K}$-algebras $A$ and $B$,
    such that  
    $$
    A * \mathbb{K}\langle x\rangle \cong B *\mathbb{K}\langle x\rangle.
    $$
\end{thm}

\section{Discussion}
The noncommutative toric action linearity property has several useful applications. In \cite{KBYu}, it is used to investigate the
properties of the group $\Aut F_n$ of automorphisms of the free algebra. As a corollary of Theorem \ref{BBfree}, one gets

\begin{cor} \label{cor1}
Let $\theta$ denote the standard action of $\mathbb{T}_n$ on $K[x_1,\ldots,x_n]$, i.e., the action
$$
\theta_t: (x_1,\ldots,x_n)\mapsto (t_1x_1,\ldots,t_nx_n).
$$
The lifting  of $\theta$ to an action on the free associative algebra $F_n$  is also given by the standard torus action.
\end{cor}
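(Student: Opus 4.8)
The plan is to deduce the statement directly from Theorem~\ref{BBfree}, using the fact that both effectiveness and the weight data of a torus action are determined by its abelianization. First I would check that the lift $\tilde{\theta}$ is effective on $F_n$. Let $\pi\colon F_n\to\mathbb{K}[x_1,\ldots,x_n]$ be the canonical projection by the commutator ideal; by construction $\pi$ intertwines $\tilde{\theta}$ with $\theta$. Hence if $\tilde{\theta}(t)=\Id$ for some $t\in\mathbb{T}_n$, then $\theta_t=\Id$ as well, and effectiveness of the standard action $\theta$ forces $t=1$. Thus $\Ker\tilde{\theta}=\{1\}$ and Theorem~\ref{BBfree} applies, producing an automorphism $\beta\in\Aut F_n$ with $\beta^{-1}\circ\tilde{\theta}(t)\circ\beta$ equal to a diagonal linear action $z_i\mapsto t_1^{m_{i1}}\cdots t_n^{m_{in}}z_i$ for some integer power matrix $[m_{ij}]$.

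The core of the argument is to show that $[m_{ij}]$ is the identity matrix, together with the correct pairing of weights to generators. For this I would use that $\pi$ is an isomorphism on degree-one components, sending $z_i\mapsto x_i$; consequently the linear part of $\tilde{\theta}(t)$ abelianizes to the linear part of $\theta_t$. Writing the linear part of $\tilde{\theta}(t)(z_i)$ as $\sum_j a_{ij}(t)z_j$, the identity $\sum_j a_{ij}(t)x_j=\theta_t(x_i)=t_ix_i$ forces $a_{ij}(t)=\delta_{ij}t_i$. Hence the linear part of $\tilde{\theta}$ is already diagonal, equal to $\diag(t_1,\ldots,t_n)$, so in the proof of Theorem~\ref{BBfree} no preliminary diagonalizing change of coordinates is needed and the power matrix is read off directly as $[m_{ij}]=\Id$. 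The linearized action is therefore $\tau(t)\colon z_i\mapsto t_iz_i$, which is exactly the standard torus action on $F_n$, and $\beta$ is the conjugating automorphism.

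The argument is essentially formal, so I do not anticipate a genuine obstacle. The one point that deserves care is ruling out the relabeling (permutation-of-weights) freedom that the general linearization theorem would otherwise allow: a priori Theorem~\ref{BBfree} only guarantees conjugacy to \emph{some} diagonal action, and one must verify that each weight $t_i$ is attached to the generator $z_i$ rather than to a permuted copy. This rigidity is precisely what the degree-one preservation under $\pi$ supplies, since it matches the linear part of $\tilde{\theta}$ to the already-diagonal standard matrix on the nose rather than merely up to conjugation, thereby guaranteeing that the linearized lift is the standard action and not simply a diagonal action carrying a different integer weight matrix.
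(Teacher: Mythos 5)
Your proposal is correct and follows essentially the same route as the paper, which presents Corollary \ref{cor1} as a direct consequence of Theorem \ref{BBfree} without spelling out any details. The two points you make explicit --- effectiveness of the lift via the intertwining projection $\pi$, and the identification of the weight matrix with the identity (rather than a permuted or otherwise relabeled diagonal action) through the degree-one part --- are precisely the steps the paper leaves implicit.
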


This assertion plays a part, along with a number of results concerning the induced formal power series topology on $\Aut F_n$,
in the establishment of the free associative analog of Theorem
\ref{commautautthm}.

\smallskip

The proofs in this paper, for the most part, have been extensions of commutative techniques. However, it is a legitimate endeavor to seek proofs of various linearity statements using tools specific to associative algebras, bypassing the known commutative results. As one outstanding example of this problem, we expect the free
associative analog of the second Bia\l{}ynicki-Birula theorem to hold, as formulated in Conjecture~\ref{bbfreeconj}.

Transition to noncommutative geometry presents the investigator with
an even broader context: one now may vary the dimensions as well as
impose restrictions on the action in the form of preservation of the
polynomial identities. Caution is well advised, as some of the results can be generalized in a straightforward manner, as exemplified by the proof in the next section, while others require more subtlety and effort. Of particular note to us, given our ongoing work in deformation quantization (see, for instance, \cite{KGE}), is the following instance of the linearization problem, which we formulate as a conjecture.

\begin{conj} \label{BBsympl}
For $n\geq 1$, let $P_n$ denote the commutative Poisson algebra, i.e. the polynomial algebra
$$
\mathbb{K}[x_1,\ldots,x_{2n}]
$$
equipped with the Poisson bracket defined by
$$
\lbrace x_i, x_j\rbrace = \delta_{i,n+j}-\delta_{i+n,j}.
$$
Then any effective regular action of $\mathbb{T}_n$ by automorphisms of $P_n$ is linearizable.
\end{conj}

A version of Theorem \ref{commautautthm} for the commutative Poisson
algebra is a conjecture of significant interest. It turns out that
the algebra $P_n$ admits a certain augmentation by central variables
which distorts the Poisson structure, such that the automorphism
group of the resulting algebra admits the property of Theorem
\ref{commautautthm}. This case is studied in  \cite{K-BE5}.

\smallskip

Finally, in accordance with the general ideas presented in the present paper, exploring extensions of the generalized Cancellation Conjecture \cite{BMLY, BY} into the associative category becomes an interesting future development. Gupta \cite{Gupta1, Gupta2, Gupta3, Gupta4} has discovered positive-characteristic counterexamples to the Zariski cancellation problem expanding upon Asanuma's result \cite{Asanuma} on the non-linearizable torus actions. In order to investigate the question of existence of non-rectifiable free algebra endomorphisms in positive characteristic, one could try to lift Gupta's construction to the free associative algebra case!

\end{document}